\def\supp{\mathrm{supp}}
\numberwithin{equation}{section}
\numberwithin{equation}{section}
\newtheorem{theo}{Theorem}[section]
\newtheorem{defi}[theo]{Definition}
\newtheorem{coro}[theo]{Corollary}
\newtheorem{lemm}[theo]{Lemma}
\newtheorem{exam}[theo]{Example}
\newtheorem{prop}[theo]{Proposition}
\newtheorem{rema}[theo]{Remark}
\newcommand{\ml}{\mathbf{l}}
\newcommand{\mi}{\mathbf{i}}
\newcommand{\mk}{\mathbf{k}}
\newcommand{\mj}{\mathbf{j}}
\newcommand{\bmi}{\underline{\mathbf{i}}}
\newcommand{\bmj}{\underline{\mathbf{j}}}
\newcommand{\bmk}{\underline{\mathbf{k}}}
\newcommand{\bml}{\underline{\mathbf{l}}}
\begin{document}
	\title[smooth modules]
	{Simple smooth  modules over the superconformal current algebra}

\author{Dong Liu}
\address{Department of Mathematics, Huzhou University, Zhejiang Huzhou, 313000, China}
\email{liudong@zjhu.edu.cn}

\author{Yufeng Pei}
\address{Department of Mathematics, Shanghai Normal University,
Shanghai, 200234, China} \email{pei@shnu.edu.cn}

\author{Limeng Xia}\address{Institute of Applied System Analysis, Jiangsu University, Jiangsu Zhenjiang, 212013, China}\email{xialimeng@ujs.edu.cn}

\author[Zhao]{Kaiming Zhao}
\address{Department of Mathematics, Wilfrid Laurier University, Waterloo, ON, Canada N2L3C5}\email{kzhao@wlu.ca}

\thanks{Mathematics Subject Classification: 17B65, 17B68, 17B70.}

\maketitle
		
\begin{abstract}   In this paper, we classify simple smooth modules over the superconformal current algebra $\frak g$. More precisely,
we first classify simple smooth modules over the Heisenberg-Clifford algebra, and then prove that any simple smooth $\frak g$-module is a tensor product of such modules for the super Virasoro algebra and the Heisenberg-Clifford algebra, or an induced module from a simple module over some finite-dimensional solvable Lie superalgebras. As a byproduct,
we provide characterizations for both simple highest weight $\frak g$-modules and simple Whittaker $\frak g$-modules. Additionally, we present several examples of simple smooth $\frak g$-modules that are not tensor product of modules over the super Virasoro algebra and the Heisenberg-Clifford algebra.
\end{abstract}

\tableofcontents

%It is well known that the super Virasoro algebra $ {\frak s}:={\rm span}_{\mathbb C}\{L_n,  G_{n+\frac12}, {\bf c}_1\mid n\in\mathbb Z\}$, and the Heisenberg-Clifford algebra ${\mathfrak {hc}}:={\rm span}_{\mathbb C}\{H_n, F_{n+\frac12}, {\bf c}_3\mid n\in\mathbb Z\}$ are infinite dimensional Lie superalgebras with the following  Lie brackets:
%$$ [L_m,L_n]=(m-n)L_{n+m}+\delta_{m+n, 0}{1\over12}(m^3-m){\bf c}_1,$$
%$$[G_p, G_q]=2L_{p+q}+\frac13(p^2-\frac14)\delta_{p+q, 0}{\bf c}_1,\,\,\, [L_m,G_p]=(\frac m2-p)G_{p+m}$$
%and
%$$[{\mathfrak {hc}},{\bf c}_3]=0,\,\,\,[H_m,F_{n+\frac12}]=0,\,\,\,[H_m,H_n]=m\delta_{m+n,0}{\bf c}_3,\,[F_{m+\frac12}, F_{n-\frac12}]=\delta_{m+n, 0}{\bf c}_3, m,n\in\mathbb Z,$$ respectively.

\section{Introduction}	
The super Virasoro algebra ${\frak s}$ and the Heisenberg-Clifford algebra ${\mathfrak {hc}}$ are well-known infinite dimensional Lie superalgebras that have significant applications in various fields of mathematics and physics \cite{KW}. The superconformal current algebra ${\frak g}$ is defined as a Lie superalgebra obtained from the semi-direct product of ${\frak s}$ and ${\mathfrak {hc}}$, as described in Definition \ref{D1}. This particular Lie superalgebra corresponds to 2-dimensional quantum field theories with both chiral and superconformal symmetries (see \cite{KT},\cite{GSW}). Additionally, Balinsky-Novikov superalgebras can also be used to realize the superconformal current algebra ${\frak g}$. 
It was originally introduced by Balinsky for constructing local translation-invariant Lie superalgebras of vector-valued functions on a line (see \cite{B, PB}).  Independently, the superalgebra $\frak g$ was also introduced as an extension of the Beltrami algebra with supersymmetry  (refer to \cite{GSWX}). Marcel, Ovsienko, and Roger studied this superalgebra in their research on generalized Sturm-Liouville operators (refer to \cite{Ma},\cite{MOR}), while it appeared in Gu's work  \cite{Gu} on integrable geodesic flows in the superextension of the Bott-Virasoro group. Recently, there is a study conducted on the superconformal current algebra within supersymmetric warped conformal field theory \cite{CHL}.

Smooth modules for a $\mathbb Z$ or $\frac12\mathbb Z$-graded Lie superalgebra are those in which any vector can be annihilated by a sufficiently large positive part of the Lie superalgebra. Smooth modules are generalizations of highest weight and Whittaker  modules. The study of smooth modules for infinite-dimensional Lie superalgebras with a $\mathbb Z$ or $\frac12\mathbb Z$-gradation is central to Lie theory because these modules are closely related to corresponding vertex operator superalgebras \cite{LL},\cite{Li}. However, classifying all smooth modules for a given Lie superalgebra remains a significant challenge.

Simple smooth modules for the Virasoro algebra and the Neveu-Schwarz algebra were completely determined in \cite{MZ2, LPX1}.
Partial results have been obtained for simple smooth modules of certain Lie (super)algebras in \cite{C, Chris, CG, LvZ, TYZ, XZ}. Among others, simple smooth modules for the Ramond algebra, the twisted Heisenberg-Virasoro algebra, the mirror Heisenberg-Virasoro algebra and the Fermion-Virasoro algebra have been systematically studied.

 In this paper, we   develop some new methods based on   \cite{LPX1, LvZ, TYZ, XL} to provide a  classification of  simple smooth modules for the superconformal current algebra ${\frak g}$.  This seemingly generalization from Lie algebras to Lie supralgebras is indeed nontrivial (overcoming superalgebra difficulties and   combining   results from the super Virasoro algebra, the Fermion-Virasoro algebra and the twisted Heisenberg-Virasoro algebra).
 Our main results are presented below.

\noindent {\bf Main theorem 1}  (Theorem \ref{mainthm})
{\it  Let $S$ be a simple smooth module over the superconformal current algebra ${\frak g}$ with the level $\ell \ne 0$. Then

{\rm(i)} $S\cong H(z)^{{\frak g}}$ where $H$ is a simple  smooth module over the Heisenberg-Clifford superalgebra ${\mathfrak {hc}}$ and $z\in\mathbb{C}$, or

{\rm(ii)} $S$ is an induced ${\frak g}$-module from a  simple smooth ${\frak g}^{(0,-n)}$-module for some $n\in\mathbb Z_+$, or

{\rm(iii)} $S\cong U^{{\frak g}}\otimes H(z)^{{\frak g}}$ where $U$ is a  simple  smooth module over the super Virasoro algebra $\frak s$,   $H$ is a  simple smooth module over the Heisenberg-Clifford superalgebra ${\mathfrak {hc}}$ and $z\in\mathbb{C}$.}

From  {\bf Main theorem 1} we see that simple smooth $\frak g$-modules under some   conditions are tensor products of such modules for the super Virasoro algebra and the Heisenberg-Clifford algebra. Meanwhile,
simple  smooth modules over the Heisenberg-Clifford algebra $\frak{hc}$ will be constructed and classified in Section 3 (Theorem \ref{smooth-hc}), and simple smooth modules over the super Virasoro algebra $\frak{s}$ have been constructed and classified in \cite{LPX1} (Theorem \ref{SVir-modules} below). So {\bf Main theorem 1} shows that simple smooth $\frak g$-modules at nonzero level have been determined.

\noindent {\bf Main theorem 2} (Theorem \ref {MT}) 
{\it Let $S$ be a simple smooth module over the superconformal current algebra ${\frak g}$ of central charge $(c, z)$ at level $\ell=0$, and the central element $H_0$ acts as a scalar $d$ with $d+(n+1)z\ne0$ for any $n\in\mathbb Z^*$.
  Then $S$ is isomorphic to a simple module of the form ${\rm Ind}_{\mathfrak g^{(0, -q)}}^{\mathfrak g}V$, where $V$ is a simple $\mathfrak g^{(0, -q)}$-module for some $q\in\mathbb{N}$}.

\noindent {\bf Main theorem 3} (Theorem \ref {thm-whittaker} and Theorem \ref{whittaker-zero}) 
{\it Suppose that $m\in \mathbb Z_+$, and $\phi_m$ and $\phi'_m$ are given in {\rm Section 6.1.2} with $\phi({\bf c}_2)=z$. %We have the following results:

{\rm (i)} If $\phi_m({\bf c}_3)=\ell\ne0$, then the universal Whittaker $\frak g$-module
$\widetilde{W}_{\phi_m}\cong W_{\phi'_m}^{\frak g}\otimes H(z)^{\frak g}$,  where $W_{\phi'_m}$ is the Whittaker $\frak s$-module and  $H=U({\mathfrak {hc}})w_{\phi_m}$ is the Whittaker $\frak{hc}$-module.  Moreover $\widetilde{W}_{\phi_m}$ is  simple if and only if
$W_{\phi'_m}$ is a simple ${\frak s}$-module.

{\rm (ii)} If $\phi_m({\bf c}_3)=0$, then $\widetilde{W}_{\phi_m}$ is  simple if and only if $\phi_m(H_m)\ne 0$.
  }

It is well known that highest weight modules and Whittaker modules  are important examples of smooth modules.  {\bf Main theorem 3} provides  the necessary and sufficient conditions for   Whittaker modules to be simple in all cases.  Meanwhile by {\bf Main theorems 1, 2}   new characterizations for simple highest weight modules and Whittaker modules are obtained in Section 6 (Theorem \ref{thmmain} and Theorem \ref{thmmain17}).

These simple smooth modules are actually  simple weak modules over the $N=1$ Heisenberg-Virasoro vertex operator superalgebras $\mathcal{V}(c,z,\ell)$ (see \cite{AJR0},\cite{AJR}). Therefore, we are able to classify all weak simple $\mathcal{V}(c,z,\ell)$-modules. It is important to note that certain weak modules induced from simple and smooth ${\frak g}^{(0,-n)}$-modules do not have the form $M_1 \otimes M_2$ as weak modules for a tensor product of two vertex operator superalgebras. Our result in this regard is  interesting because such modules do not exist in the category of ordinary modules for vertex operator superalgebras.

The paper is organized as follows. In Section 2, we recall some related results that will be used throughout the paper.   In Section 3, we obtain all simple smooth modules over the Heisenberg-Clifford superalgebra, 
and
 investigate the tensor product of simple modules over the superconformal current algebra, where we provide some general results. In Section 4, we determine all simple smooth modules for the superconformal current algebra at non-zero level (Theorem \ref{mainthm}). Section 5 determines   simple smooth modules at level zero with a mild condition (Theorem \ref{MT}).  In Section 6, we then apply Theorems  \ref{mainthm} and \ref{MT} to give   characterizations of simple highest weight modules (Theorem \ref{thmmain}) and simple Whittaker modules over ${\mathfrak{g}}$ (Theorem \ref{thm-whittaker}, \ref{thmmain17}). In the end, we provide a few examples of simple smooth ${\mathfrak{g}}$-modules that are not tensor product modules over super Virasoro modules and Heisenberg-Clifford modules.
 
 Throughout  this paper,  $\mathbb Z$, $\mathbb Z^*$,  $\mathbb N$ and $\mathbb Z_+$  denote the set of integers, nonzero integers,     non-negative and positive integers, respectively. Let  $\mathbb C$ and $\mathbb C^*$ denote the sets of complex numbers and nonzero complex numbers, respectively. All vector spaces and Lie superalgebras are assumed to be over $\mathbb C$. We denote the universal algebra of a Lie superalgebra $L$ by $U(L)$.

\section{Notations and preliminaries}
	
In this section, we will review the  notations and established results associated with the superconformal current algebra.

\subsection{Superconformal current algebra}

\begin{defi} \cite{KT}\label{D1} The superconformal current algebra  ${\frak g}$ has a basis $$\left\{L_m, H_m, G_p, F_p, {\bf c}_i \mid i=1,2,3, m\in\mathbb Z, p\in\mathbb Z+\frac12\right\}$$
where
$L_m, H_m, {\bf c}_i\in\frak{g}_{\bar 0}$ and  $G_p, F_p\in\frak{g}_{\bar 1}$,  with the following relations:
\begin{eqnarray*}\label{N=1super}
&& [L_m,L_n]=(m-n)L_{n+m}+\delta_{m+n, 0}{1\over12}(m^3-m){\bf c}_1,\nonumber\\
&& [L_m, H_n]=-nH_{n+m}+\delta_{m+n, 0}(m^2+m){\bf c}_2,\nonumber\\
&&[H_m, H_n]=m\delta_{m+n, 0}{\bf c}_3,\\
&&\label{gd2}[L_m,G_p]=\left(\frac m2-p\right)G_{p+m}, \\ 
&&\label{gd3}[G_p, G_q]=2L_{p+q}+\frac13\left(p^2-\frac14\right)\delta_{p+q, 0}{\bf c}_1,\\
&& [L_m, F_p]=-\left(\frac m2+p\right)F_{m+p},\\ 
&& [F_p, F_q]=\delta_{p+q, 0}{\bf c}_3,\\
&&\label{gd3}[G_p, F_q]=H_{p+q}+(2p+1)\delta_{p+q, 0}{\bf c}_2,\\
&&[H_m, G_p]=mF_{m+p}, \\
&&[H_m, F_p]=0, \ [\frak g, {\bf c}_i]=0
\end{eqnarray*}
for $m,n\in\mathbb Z,\,p, q\in\mathbb Z+\frac12, i=1,2,3$.
\end{defi}

Note that ${\frak g}$ is equipped with a triangular decomposition and $\frac12\mathbb Z$-graded structure:
$
{{\frak g}}={\frak g}^{+}\oplus \frak g_0\oplus {\frak g}^{-},
$
where
\begin{eqnarray*}
	&&{\frak g}^{\pm}=\bigoplus_{n\in\mathbb Z_+, r\in\mathbb N+\frac12}(\mathbb C L_{\pm n}\oplus\mathbb C H_{\pm n}\oplus \mathbb C G_{\pm r}\oplus\mathbb CF_{\pm r}),\\
	&&{\frak g}_0=\mathbb C H_0\oplus\mathbb C L_0\oplus\oplus_{i=1}^3\mathbb C {\bf c}_i.
\end{eqnarray*}
Moreover, \begin{equation}{\frak g}=\bigoplus_{k\in\frac12\mathbb Z}{\frak g}_k\end{equation} is $\frac12\mathbb Z$-graded with 
${\frak g}_i=\mathbb C L_i\oplus\mathbb C H_i, {\frak g}_{r}=\mathbb C G_{r}\oplus\mathbb C F_r\label{grading}$
 for $i\in\mathbb Z^*, r\in\mathbb Z+\frac12$.  
	
	It is worth noting that the center of $\frak g$ is $\mathfrak z:={\rm span}_{\mathbb C}\{H_0, {\bf c}_1,{\bf c}_2,{\bf c}_3\}$.
	Furthermore, it is interesting to observe that ${\frak g}$ contains six important Lie superalgebras:

 \begin{itemize}
\item  the super Virasoro algebra (also called $N=1$ Neveu-Schwarz algebra) $${\frak s}=\text{span}_{\mathbb{C}}\{L_i, G_{i+\frac12},  {\bf c}_1\mid i\in\mathbb Z\},$$ 

\item  the Heisenberg algebra 
$$
\frak h=\text{span}_{\mathbb{C}}\{H_i, {\bf c}_3\mid i\in\mathbb Z\},
$$

\item the Fermion superalgebra 
$$
\frak f=\text{span}_{\mathbb{C}}\{F_{i+\frac12}, {\bf c}_3\mid i\in\mathbb Z\},$$

\item  the Heisenberg-Clifford superalgebra $$\mathfrak {hc}={\rm span}_\mathbb C \{H_i, F_{i+\frac12}, {\bf c}_3\mid i\in\mathbb Z\},$$ 
\item  the twisted Heisenberg-Virasoro algebra $$\frak{hv}=\text{span}_{\mathbb C}\{L_i, H_{i},  {\bf c}_1, {\bf c}_2, {\bf c}_3\mid i\in\mathbb Z\},$$
\item  the Fermion-Virasoro algebra $$\frak{fv}=\text{span}_{\mathbb C}\{L_i, F_{i+\frac12},  {\bf c}_1, {\bf c}_3\mid i\in\mathbb Z\}.$$

 \end{itemize}
 
Set ${\mathfrak {hc}}^\pm={\mathfrak {hc}}\cap\frak g^\pm$ and $\mathfrak s^\pm=\mathfrak s\cap\frak g^\pm$.
	For convenience, we define the following subalgebras of ${\frak g}$. For any $m\in\mathbb N, n\in\mathbb Z$, set
	\begin{equation}\label{Natations}
		\begin{split}
			&{\frak s}^{(m)}= \sum_{i\in\mathbb N}\mathbb C L_{m+i}\oplus\sum_{i\in\mathbb N}\mathbb C G_{m+i+\frac12}\oplus\mathbb C{\bf c}_1,\\
			&{\mathfrak {hc}}^{(m)}=\sum_{i\in\mathbb N}\left(\mathbb C H_{m+i}+\mathbb C F_{m+i+\frac12}\right)\oplus\mathbb C{\bf c}_3\\
		&{\frak g}^{(m, n)}=\frak s^{(m)}\oplus \frak{hc}^{(n)}\oplus\mathbb C{\bf c}_2,\\
		&{\frak g}^{(m, -\infty)}=\frak s^{(m)}\oplus \frak{hc}\oplus\mathbb C{\bf c}_2.
		\end{split}
	\end{equation}

		\begin{defi}
We define a $\mathfrak g$-module $W$ to have central charge $(c, z)$ if ${\bf c}_1$ and ${\bf c}_2$ act on $W$ as complex scalars $c$ and $z$, respectively. Similarly, we say that a $\mathfrak g$-module $W$ has {\bf level} $\ell$, if ${\bf c}_3$ acts on it as the complex scalar $\ell$.

	\end{defi}

\subsection{Some concepts about smooth modules}

In this subsection, we introduce some concepts about smooth modules. %for an arbitral $\frac12\mathbb Z$-graded Lie superalgebra.	
\begin{defi}
Let $L=\oplus_{i\in\frac12\mathbb Z}{L}_{i}$ be an arbitrary $\frac12\mathbb Z$-graded Lie superalgebra.
An $L$-module $V$ is called  the smooth module if for any $v\in V$ there exists $n\in\frac12\mathbb N$ such that $L_iv=0$,
for $i>n$. The  category of smooth modules over $L$ will be denoted as ${\mathcal R}_{L}$.
\end{defi}

\begin{defi}
Let $\mathfrak{a}$ be a subspace of a Lie superalgebra $L$,  and $V$ be an $L$-module.  We denote
$${\rm Ann}_V(\mathfrak{a})=\{v\in V\mid\mathfrak{a}v=0\}.
$$
\end{defi}

\begin{defi}
Let $L$ be a  Lie superalgebra, $V$ an $L$-module and $x\in L$.
\begin{itemize}
\item[\rm (1)] If for any $v\in V$ there exists $n\in{\mathbb Z}_+$ such that $x^nv=0$,  then we say that the action of  $x$  on $V$ is  locally nilpotent.
\item[\rm (2)] If for any $v\in V$ we have $\mathrm{dim}\,(\sum_{n\in\mathbb N}\mathbb C x^nv)<+\infty$, then   the action of $x$ on $V$ is  said to be locally finite.
\item[\rm (3)] The action of $L$   on $V$  is said to be locally nilpotent if for any $v\in V$ there exists an $n\in{\mathbb Z}_+$ (depending on $v$) such that $x_1x_2\cdots x_nv=0$ for any $x_1,x_2,\cdots, x_n\in L$.
\item[\rm (4)] The action of $L$  on $V$  is said to be locally finite if for any $v\in V$, {\rm dim}\,$U(L)v<\infty$.
\end{itemize}
\end{defi}

\begin{defi}
	A module $M$ over an associative (or Lie) superalgebra $A$ is called strictly simple if it is a simple module over the   algebra $A$ (forgetting the $\mathbb Z_2$-gradation).
\end{defi}

\subsection{Induced modules}

Denote by  $\mathbb{M}$ the set of all infinite vectors of the form $\mi:=(\ldots, i_2, i_1)$ with entries in $\mathbb N$,
satisfying the condition that the number of nonzero entries is finite, and $\mathbb{M}_1:=\{\mathbf{i}\in \mathbb{M}\mid i_k=0, 1, \ \forall k\in\mathbb Z_+\}$.

Let $\mathbf{0}=(\ldots, 0, 0)\in\mathbb{M}$, and
for
$i\in\mathbb Z_+$  let $\epsilon_i$ denote the element $(\ldots,0,1,0,\ldots,0)\in\mathbb{M}$,
where $1$ is
in the $i$'th  position from right.  For any nonzero  $\mi\in\mathbb{M}$, let $\hat{i}$ be the smallest integer $p$ such that $i_p \neq0$ and define  $\mi^\prime=\mi-\epsilon_{\hat{i}}$.
Let $\hat{\hat{i}}$ be the largest integer $q$ such that $i_q \neq0$ and define  $\mi^{\prime\prime}=\mi-\epsilon_{\hat{\hat{i}}}$.

For $\mi, \mj\in\mathbb{M}, \mk, \ml\in\mathbb{M}_1$, we denote
$$\ell(\mi)=\sum_{j\ge 1}i_j,$$
$$L^{\mi}H^{\mj}G^{\mk}F^{\ml} =\ldots L_{-2}^{i_2} L_{-1}^{i_1}\ldots H_{-2}^{j_2}H_{-1}^{j_1}\ldots G_{-2+\frac{1}{2}}^{k_2} G_{-1+\frac{1}{2}}^{k_1}\ldots F_{-2+\frac{1}{2}}^{l_2} F_{-1+\frac{1}{2}}^{l_1}\in U({\frak g}_{-})$$
and
\begin{equation} {\rm w}(\mi, \mj, \mk, \ml)=\sum_{n\in\mathbb Z_+}i_n\cdot n+\sum_{n\in\mathbb Z_+}j_n\cdot n+\sum_{n\in\mathbb Z_+}k_n\cdot (n-\frac12)+\sum_{n\in\mathbb Z_+}l_n\cdot (n-\frac12),\label{length}\end{equation} which is called the length of $(\mi, \mj, \mk, \ml)$ (or the length of $L^{\mi}H^{\mj}G^{\mk}F^{\ml}$).
\begin{equation} {\rm w}(\mi, \mk)=\sum_{n\in\mathbb Z_+}i_n\cdot n+\sum_{n\in\mathbb Z_+}k_n\cdot (n-\frac12), \end{equation} which is called the length of $(\mi,\mk)$ (or the length of $L^{\mi}G^{\mk}$ or the length of $H^{\mi}F^{\mk}$).

Denote by $<$ the  {\em  lexicographical  total order}  on  $\mathbb{M}$,  defined as follows: for any $\mi,\mj\in\mathbb{M}$, set $$\mi< \mathbf{j}\ \Leftrightarrow \ \mathrm{ there\ exists} \ r\in\mathbb Z_+ \ \mathrm{such \ that} \ i_r<j_r\ \mathrm{and} \ i_s=j_s,\ \forall\; s>r.$$

 Denote by $\prec$ the  {\em reverse  lexicographical  total order}  on  $\mathbb{M}$,  defined as follows: for any $\mi,\mj\in\mathbb{M}$, set $$\mi\prec \mathbf{j}\ \Leftrightarrow \ \mathrm{ there\ exists} \ r\in\mathbb Z_+ \ \mathrm{such \ that} \ i_r<j_r\ \mathrm{and} \ i_s=j_s,\ \forall\; 1\le s<r.$$

Now we introduce two total orders "$<$" and "$\prec$" on $\mathbb{M}\times\mathbb{M}_1$:

\begin{defi}\label{order}  
Denoted by $<$:
$(\mj,\ml) < (\mj_1,\ml_1)$ if and only if one of the following conditions is satisfied:

%(1) ${\rm w}(\mi, \mj)<\rm {w}(\mi_1, \mj_1)$;

{\rm (1)} $\mj <\mj_1$;

{\rm (2)} $\ \mj=\mj_1\
\mathrm{and }\ \ml < \ml_1$  for all $\mj, \mj_1\in\mathbb M, \mathbf{l}, \ml_1\in\mathbb{M}_1$.
\end{defi}

\begin{defi}\label{order}  
Denoted by $\prec$:
$(\mi,\mk) \prec (\mi_1,\mk_1)$ if and only if one of the following conditions is satisfied:

{\rm (1)} ${\rm w}(\mi, \mk)<\rm {w}(\mi_1, \mk_1)$;

{\rm (2)} ${\rm w}(\mi, \mk)=\rm {w}(\mi_1, \mk_1)$ and $\mk\prec\mk_1$;

{\rm (3)} ${\rm w}(\mi, \mk)=\rm {w}(\mi_1, \mk_1)$, $\mk=\mk_1$, $\ell(\mi)<\ell(\mi_1)$;

{\rm (4)} ${\rm w}(\mi, \mk)=\rm {w}(\mi_1, \mk_1)$, $\mk=\mk_1$, $\ell(\mi)=\ell(\mi_1)$ and $\mi\prec \mi_1$, 
  for all $\mi, \mi_1\in\mathbb M, \mathbf{k}, \mk_1\in\mathbb{M}_1$.
\end{defi}

\begin{defi}\label{order}    

Now we can induce a  principal total order on $\mathbb{M}^2\times\mathbb{M}_1^2$, still denoted by $\prec$:
$(\mi, \mj, \mk, \ml) \prec (\mi_1, \mj_1, \mk_1, \ml_1) $ if and only if one of the following conditions is satisfied:

{\rm (1)} ${\rm w}(\mi, \mj, \mk, \ml)<{\rm w}(\mi_1, \mj_1, \mk_1, \ml_1)$;

{\rm (2)} ${\rm w}(\mi, \mj, \mk, \ml)={\rm w}(\mi_1, \mj_1, \mk_1, \ml_1)$ and $\mk \prec \mathbf{k}_1$;

{\rm (3)} ${\rm w}(\mi, \mj, \mk, \ml)={\rm w}(\mi_1, \mj_1, \mk_1, \ml_1)$ and $\mk =\mathbf{k}_1$ and $\mi \prec \mathbf{i}_1$;

{\rm (4)} ${\rm w}(\mi, \mj, \mk, \ml)={\rm w}(\mi_1, \mj_1, \mk_1, \ml_1)$, $(\mi,\mk) =(\mi_1,\mathbf{k}_1)$ and $(\mj, \ml) < (\mj_1, \ml_1)$.
%for all $\mi, \mj, \mi_1, \mj_1\in\mathbb M, \mk, \mk_1, \ml, %\ml_1\in\mathbb{M}_1$.
%(5) $\mk =\mk_1, \mi=\mi_1$ and $\mj =\mj_1$ and $\mj \prec \mj_1$.
\end{defi}

For $q\in\mathbb N$ or $q=\infty$, let $V$ be a $\frak g^{(0, -q)}$-module.
According to the $\mathrm{PBW}$ Theorem, every element of $\mathrm{Ind}_{\frak g^{(0, -q)}}^{\frak g}V$ can be uniquely written in the
following form
\begin{equation}\label{def2.1}
\sum_{\mi, \mj\in\mathbb{M},\mk, \ml\in\mathbb{M}_1}L^{\mi}H^{\mj}G^{\mk}F^{\ml} v_{\mi, \mj, \mk,\ml},
\end{equation}
where all  $v_{\mi, \mj, \mk,\ml}\in V$ and only finitely many of them are nonzero. 

\begin{defi}
For any $v\in\mathrm{Ind}_{\frak g^{(0, -q)}}^{\frak g}V$ as in  \eqref{def2.1}, we denote by $\mathrm{supp}(v)$ the set of all $(\mi, \mj, \mk,\ml)\in \mathbb{M}^2\times\mathbb{M}_1^2$  such that $v_{\mi, \mj, \mk,\ml}\neq0$. For a nonzero $v\in \mathrm{Ind}_{\frak g^{(0, -q)}}^{\frak g}V$, we write $\mathrm{deg}(v)$  the maximal element (with respect to the principal total order on $\mathbb{M}^2\times\mathbb{M}_1^2$) in $\mathrm{supp}(v)$, called the {\it  degree} of $v$. 
 \end{defi}

 For later use, we also define $\mathrm{supp}_{\frak s}(v)$ to be the set of all $(\mi,\mk)\in \mathbb{M}\times\mathbb{M}_1$  with $v_{\mi, \mj, \mk,\ml}\neq0$, $\mathrm{supp}_{\frak{hc}}(v)$ to be the set of all $(\mj, \ml)\in \mathbb{M}\times\mathbb{M}_1$  with $v_{\mi, \mj, \mk,\ml}\neq0$, 
 $\mathrm{deg}_{\frak s}(v)$ to be  the maximal element (with respect to the principal total order $\prec$ on $\mathbb{M}\times\mathbb{M}_1$) in $\mathrm{supp}(v)$,  $\mathrm{deg}_{\frak {hc}}(v)$ to be  the maximal element (with respect to the principal total order $<$ on $\mathbb{M}\times\mathbb{M}_1$) in $\mathrm{supp}_{\frak {hc}}(v)$, and denote by $|v|_{\frak s}=\text w({\rm deg}_{\frak s}(v))$,  $|v|_{\frak {hc}}=\text w({\rm deg}_{\frak {hc}}(v))$. Note that here and later we make the convention 
that  $\mathrm{deg}(v)$, $\mathrm{deg}_{\frak s}(v)$ and $\mathrm{deg}_{\frak {hc}}(v)$ only for  $v\neq0$. 
Now we give a lemma for later convenience.

\begin{lemm}\label{homo}
For any $k, l\in\mathbb Z_+, k\ge l$,  $x\in U(\frak g^-\oplus\frak g^0)_{-k}$ and $y\in \frak g_{l}$, we have 
$[y, x]\in U(\frak g^-\oplus\frak g^0)_{-k+l}+U(\frak g^-\oplus\frak g^0)\frak g_+$, where the grading of $U(\frak g^-\oplus\frak g^0)_{-k}$ is according to the adjoint action of $L_0$.
\end{lemm}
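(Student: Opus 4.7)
My plan is to argue by induction on the length of $x$ as a PBW monomial in $U(\frak g^{-}\oplus\frak g^{0})$; by linearity and homogeneity we may assume $x$ is such a monomial, homogeneous for the $\mathbb{Z}_2$- and $\frac12\mathbb{Z}$-gradings. For the base case, $x\in\frak g^{-}\oplus\frak g^{0}$ is of weight $-k$, the $\frac12\mathbb Z$-grading gives $[y,x]\in\frak g_{l-k}$, and the hypothesis $k\ge l$ forces $l-k\le 0$, so $[y,x]$ sits inside $\frak g^{-}\oplus\frak g^{0}\subseteq U(\frak g^{-}\oplus\frak g^{0})_{-k+l}$.

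For the inductive step I split $x=z\cdot x'$ with $z\in\frak g^{-}\oplus\frak g^{0}$ homogeneous of weight $-m$ (some $0\le m\le k$) and $x'\in U(\frak g^{-}\oplus\frak g^{0})_{-(k-m)}$ of shorter length, and apply the super Leibniz rule to obtain
\[
[y,x]=[y,z]\,x'+(-1)^{|y||z|}z\,[y,x'].
\]
For the summand $[y,z]\,x'$, if $m\ge l$ then $[y,z]\in\frak g^{-}\oplus\frak g^{0}$ and the term lies directly in $U(\frak g^{-}\oplus\frak g^{0})_{-k+l}$; otherwise $[y,z]\in\frak g_{+}$, and I rewrite $[y,z]\,x'=\pm\,x'[y,z]+[[y,z],x']$, observing that $x'[y,z]\in U(\frak g^{-}\oplus\frak g^{0})\frak g_{+}$ and that the commutator $[[y,z],x']$ falls under the inductive hypothesis with $y$ replaced by $[y,z]\in\frak g_{l-m}$ and $(k,l)$ replaced by $(k-m,l-m)$; the required inequality $k-m\ge l-m$ is just the original $k\ge l$. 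For $z\,[y,x']$, when $k-m\ge l$ the inductive hypothesis applies directly to $[y,x']$ and multiplying by $z$ yields the desired form.

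The main obstacle is the remaining subcase $k-m<l$ of the second summand, which the hypothesis as literally stated does not cover. I would resolve this by strengthening the statement to allow arbitrary $k,l\ge 0$ without the restriction $k\ge l$, interpreting $U(\frak g^{-}\oplus\frak g^{0})_{-k+l}$ as $\{0\}$ whenever $l>k$; the inductive argument above then still shows $[y,x']\in U(\frak g^{-}\oplus\frak g^{0})\frak g_{+}$ in this range, closing the loop. Equivalently, one may bypass the induction entirely via PBW: the decomposition $U(\frak g)=U(\frak g^{-}\oplus\frak g^{0})\oplus U(\frak g^{-}\oplus\frak g^{0})\,\frak g_{+}\,U(\frak g_{+})$ is $\frac12\mathbb{Z}$-graded, the homogeneous element $yx$ has grade $l-k\le 0$ and hence projects to $U(\frak g^{-}\oplus\frak g^{0})_{-k+l}$, while $xy\in U(\frak g^{-}\oplus\frak g^{0})\frak g_{+}$ trivially, so $[y,x]=yx-(-1)^{|y||x|}xy$ has the asserted form.
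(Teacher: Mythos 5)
Your main argument is correct and is essentially what the paper intends: its proof is just ``direct calculations,'' and your Leibniz-rule induction on the length of a PBW monomial, together with the strengthened statement (allowing $l>k$ with $U(\frak g^-\oplus\frak g^0)_{-k+l}=0$), is exactly that calculation carried out carefully. One caution about your closing ``bypass'': projecting $yx$ along $U(\frak g)=U(\frak g^-\oplus\frak g^0)\oplus U(\frak g^-\oplus\frak g^0)\frak g_+U(\frak g_+)$ only places the second component in $U(\frak g^-\oplus\frak g^0)\frak g_+U(\frak g_+)$, which is strictly larger than the required $U(\frak g^-\oplus\frak g^0)\frak g_+$ (e.g.\ $L_1L_1$ lies in the former but not the latter), so to finish that route you would still need the observation from your induction that moving the single element $y$ to the right never creates more than one positive factor; as written, the inductive proof is the one that stands on its own.
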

\begin{proof}
It follows by  direct calculations.
\end{proof}

\section{Simple smooth modules over the Heisenberg-Clifford algebra}

In this section we shall consider all simple smooth ${\mathfrak {hc}}$-modules with nonzero level.

\subsection{Tensor product of simple modules}

We need the following result on simple tensor product modules.
\begin{lemm}\cite[Lemma 2.2]{XL}\label{tensor-simple}
Let $A, A'$ be unital associative superalgebras, and $M, M'$ be $A, A'$ modules, respectively.
If $A'$ has a countable basis and $M'$ is strictly simple, then

$(1)$ $M\otimes M'$ is a simple $A\otimes A'$-module if and only if $M$ is a simple $A$-module;

$(2)$ if $V$ is a simple $A\otimes A'$-module containing a strictly simple $\mathbb C\otimes A'$-submodule $M'$, then $V\cong M\otimes M'$ for some simple $A$-module
$M$.
\end{lemm}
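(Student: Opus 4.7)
The plan is to adapt the classical double-centralizer/density arguments of Jacobson and Burnside to the $\mathbb Z_2$-graded setting, exploiting the fact that strict simplicity of $M'$ lets us ignore the superstructure on $A'$ and $M'$ throughout. The first step will be a density lemma: since $A'$ has countable dimension and $M'$ is strictly simple, Dixmier's form of Schur's lemma gives $\mathrm{End}_{A'}(M') = \mathbb C$, and the classical Jacobson density theorem then supplies, for any linearly independent $m'_1,\ldots, m'_n\in M'$ and any $w'_1,\ldots, w'_n\in M'$, an element $a'\in A'$ with $a'm'_i = w'_i$ for all $i$. This separation-of-indices fact is the workhorse for both parts.

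For (1), one direction is immediate: if $M\otimes M'$ is simple and $N\subsetneq M$ is a proper nonzero $A$-submodule, then $N\otimes M'$ is a proper nonzero $A\otimes A'$-submodule, a contradiction. For the converse, take a nonzero $w\in M\otimes M'$ and write $w=\sum_{i=1}^n m_i\otimes m'_i$ with $n$ minimal, forcing both the $m_i$ and the $m'_i$ to be linearly independent. Using density, choose $a'\in A'$ with $a'm'_1=m'_1$ and $a'm'_j=0$ for $j\ge 2$, so that $(1\otimes a')w = m_1\otimes m'_1$ up to a sign. Simplicity of $M$ allows sending $m_1$ to any element of $M$ and strict simplicity of $M'$ allows sending $m'_1$ to any element of $M'$, so the submodule generated by $w$ contains every simple tensor, hence all of $M\otimes M'$.

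For (2), I would put $M = \mathrm{Hom}_{A'}(M', V)$ with $A$-action $(a\cdot f)(m') = (a\otimes 1)f(m')$ and consider the evaluation map
$$\Phi: M\otimes M'\to V,\qquad f\otimes m'\mapsto f(m').$$
This is an $A\otimes A'$-homomorphism whose image contains $M'$ (take $f$ to be the inclusion $M'\hookrightarrow V$), hence is nonzero; simplicity of $V$ forces $\Phi$ to be surjective. For injectivity, take $\sum_{i=1}^n f_i\otimes m'_i\in\ker\Phi$ with $n$ minimal and the $m'_i$ linearly independent; for each $j$ choose $a'_j$ by density with $a'_j m'_j=m'_j$ and $a'_j m'_i=0$ for $i\neq j$, apply $1\otimes a'_j$ to the sum to deduce $f_j(m'_j)=0$, and then conclude $f_j\equiv 0$ from strict simplicity of $M'$, contradicting minimality of $n$. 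Thus $\Phi$ is bijective and $V\cong M\otimes M'$; simplicity of $M$ follows from (1).

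The main obstacle is the density ingredient: to invoke Jacobson density one first needs $\mathrm{End}_{A'}(M')=\mathbb C$, which is exactly what the countable-basis hypothesis buys via Dixmier's lemma. Koszul signs in the $A\otimes A'$-action on $M\otimes M'$ are a cosmetic nuisance but do not disturb the argument, because strict simplicity allows forgetting the $\mathbb Z_2$-grading on $A'$ and $M'$ and running the entire reduction in the ungraded setting.
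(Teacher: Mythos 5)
The paper does not actually prove this lemma (it is quoted from [XL, Lemma~2.2]), so the comparison is with the standard argument there; your overall route (Dixmier's lemma plus Jacobson density for (1), and the evaluation map $\mathrm{Hom}_{A'}(M',V)\otimes M'\to V$ for (2)) is that standard route. The genuine problem is the point you declare cosmetic: the Koszul signs. On $M\otimes M'$ the subalgebra $1\otimes A'$ does not act by the ungraded action up to an overall sign: for homogeneous $a'$ one has $(1\otimes a')(m\otimes m')=(-1)^{|a'||m|}\,m\otimes a'm'$, i.e.\ $1\otimes a'$ acts as $\tau^{|a'|}\otimes a'$ where $\tau$ is the parity involution of $M$. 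The element $a'$ produced by the density theorem is in general inhomogeneous, and writing $a'=a'_{\bar 0}+a'_{\bar 1}$, $m_i=m_{i,\bar 0}+m_{i,\bar 1}$, you get $(1\otimes a')\sum_i m_i\otimes m'_i=\sum_i\bigl(m_{i,\bar 0}\otimes a'm'_i+m_{i,\bar 1}\otimes\sigma(a')m'_i\bigr)$ with $\sigma(a')=a'_{\bar 0}-a'_{\bar 1}$; you control $a'm'_i$ but have no information about $\sigma(a')m'_i$, so $(1\otimes a')w$ need not be $m_1\otimes m'_1$ up to a sign. The same defect occurs in the injectivity step of (2). Also, ``forgetting the $\mathbb Z_2$-grading on $A'$ and $M'$'' is not available here: the grading of $M$ enters the $A\otimes A'$-module structure of $M\otimes M'$ precisely through these signs, so one cannot literally run the reduction in the ungraded setting.

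The gap is repairable, but it needs a step you did not supply. First write the chosen element as $\sum_i m_i\otimes m'_i$ with the $m'_i$ \emph{homogeneous} and linearly independent (always possible, splitting along $M'=M'_{\bar 0}\oplus M'_{\bar 1}$). Then, because the prescribed images $m'_1,0,\dots,0$ are homogeneous of the same parity as the sources, the relations $a'm'_1=m'_1$, $a'm'_j=0$ force the even component alone to satisfy $a'_{\bar 0}m'_1=m'_1$, $a'_{\bar 0}m'_j=0$; replacing $a'$ by $a'_{\bar 0}$, the operator $1\otimes a'_{\bar 0}$ acts with no signs and your computation, and the subsequent spanning argument $(A\otimes A')(m_1\otimes m'_1)=M\otimes M'$, go through (work with a homogeneous generator of the submodule so that $Am_1=M$ is available). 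In part (2) you additionally need the super conventions for $\mathrm{Hom}_{A'}(M',V)$ and for the $A$-action on it: with the naive ungraded definitions $a\cdot f$ fails to be $A'$-linear when $a$ and $a'$ are both odd, since $(1\otimes a')(a\otimes 1)=(-1)^{|a||a'|}(a\otimes 1)(1\otimes a')$ in $A\otimes A'$; once graded Homs are used and the same evenness trick is applied in the kernel argument, your evaluation-map proof of (2) is correct. Minor point worth stating explicitly: $\mathrm{End}_{A'}(M')=\mathbb C$ follows from Dixmier's lemma because $M'=A'v$ is of countable dimension, which is where the countable-basis hypothesis is used.
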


\begin{lemm}\label{induced}Let $W$ be a module over a Lie superalgebra $L$, $S$ be a subalgebra of $L$, and $B$ be an $S$-module.
Then the $L$-module homomorphism $\varphi:
{\rm Ind}_{S}^L(B\otimes W)\rightarrow 
{\rm Ind}_{S}^L(B)\otimes W$ induced from the inclusion map $B\otimes W\rightarrow {\rm Ind}_{S}^L(B)\otimes W$ is an $L$-module
isomorphism.
\end{lemm}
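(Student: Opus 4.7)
The plan is to build $\varphi$ via Frobenius reciprocity, reduce both the domain and the codomain to a common vector space by PBW, and then prove bijectivity of $\varphi$ by an upper-triangularity argument on a PBW filtration.

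First, I would verify that the inclusion $\iota : B \otimes W \hookrightarrow {\rm Ind}_S^L(B) \otimes W$, $b \otimes w \mapsto (1 \otimes_{U(S)} b) \otimes w$, is an $S$-module homomorphism, where the source carries the diagonal $S$-action and the target is restricted from its natural $L$-action. This check reduces to the identity $s \cdot (1 \otimes_{U(S)} b) = 1 \otimes_{U(S)} sb$ inside the induced module for $s \in S$, combined with the definition of the diagonal action and the Koszul sign rule. The universal property of induction (Frobenius reciprocity) then produces $\varphi$ as the unique $L$-module extension of $\iota$.

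For bijectivity, fix a homogeneous vector-space complement $L^-$ of $S$ in $L$. By PBW, $U(L) \cong U(L^-) \otimes U(S)$ as vector spaces, and consequently ${\rm Ind}_S^L(M) \cong U(L^-) \otimes M$ as vector spaces for any $S$-module $M$. Under these identifications both the source and the target of $\varphi$ become identified with $U(L^-) \otimes B \otimes W$, filtered by degree in $U(L^-)$, and $\varphi$ restricts to the identity on $1 \otimes B \otimes W$ by construction. I would then prove by induction on the PBW-degree of a basis monomial $u \in U(L^-)$ that
\[
\varphi\bigl(u \otimes b \otimes w\bigr) = u \otimes b \otimes w + (\text{terms of strictly smaller degree in } U(L^-)).
\]
The inductive step uses $L$-linearity of $\varphi$ together with the identity $\varphi(u \otimes b \otimes w) = u \cdot ((1 \otimes_{U(S)} b) \otimes w)$ and the primitive coproduct $\Delta(x) = x \otimes 1 + 1 \otimes x$ for $x \in L^-$: the summand in which every factor of $u$ acts on the first tensor slot reproduces $u \otimes b \otimes w$ after PBW reordering, while every other summand has at least one factor acting on $w$ and therefore strictly smaller first-slot degree. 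This yields an upper-triangular structure with identity on the diagonal, so $\varphi$ is a vector-space isomorphism.

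The main obstacle will be the careful bookkeeping of Koszul signs in the super setting, together with verifying that the PBW reordering of products in $U(L^-)$ and the absorption of $U(S)$-factors across $\otimes_{U(S)}$ onto $b$ produce only strictly lower-degree contributions to the $U(L^-)$-filtration. Should this filtration argument prove unwieldy, an alternative I would keep in reserve is to construct the inverse $\psi$ explicitly in Sweedler notation using the antipode of the Hopf superalgebra $U(L)$, in which case well-definedness over $U(S)$ and the relations $\varphi \psi = \psi \varphi = {\rm id}$ would follow from coassociativity and the antipode axiom.
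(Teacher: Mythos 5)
Your proof is correct, and it is essentially the standard ``tensor identity'' argument. Note that the paper itself gives no argument at all for this lemma: it simply cites \cite[Lemma 8]{LvZ} and asserts that the proof carries over from Lie algebras to Lie superalgebras. Your construction of $\varphi$ via Frobenius reciprocity and the PBW identification of both sides with $U(L^-)\otimes B\otimes W$, followed by the upper-triangularity of $\varphi$ with respect to the $U(L^-)$-degree filtration (leading term $u\otimes b\otimes w$ with coefficient $1$, all other coproduct terms sending at least one factor onto $W$ and hence dropping the first-slot degree, with Koszul signs only affecting lower-order terms), is precisely the argument underlying the cited lemma, now written out in the super setting; so your proposal supplies the details the paper outsources rather than taking a genuinely different route.
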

\begin{proof}
It is essentially the same as that of \cite[Lemma 8]{LvZ} although we have modules over Lie superalgebras here.
\end{proof}

%\begin{proof} Take a subspace $V\subset L$ such that $L={S}\oplus V$. Let $\{l_i|i\in I\}$, $\{b_j|j\in J_1\}$, $\{w_j|j\in J_2\}$ be bases of $V$, $B$ and $W$, respectively. Denote $S=\{l_{i_1}^{k_1}l_{i_2}^{k_2}\cdots l_{i_m}^{k_m}|i_1\prec i_2\prec\cdots \prec k_m\}$, where $\prec$ is a total order on $I$, and $U_n={\rm span} \{l_{i_1}^{k_1}l_{i_2}^{k_2}\cdots l_{i_m}^{k_m}\in S\,\,|\,\,k_1+\ldots+k_m=n\}$. Then $$T_1=\{x (w_s\otimes b_t) \,\,|\,\, x\in S, s\in J_1,t\in J_2\},$$ $$T_2=\{w_{s}\otimes (x b_t) \,\,|\,\,x\in S, s\in J_1,t\in J_2\}$$ are bases of ${\rm Ind}_{S}^L(W\otimes B)$ and $W\otimes {\rm Ind}_{S}^L(B)$, respectively.

% For any $x=l_{i_1}^{k_1}l_{i_2}^{k_2}\cdots l_{i_m}^{k_m}\in S$, by induction on $n=\sum_{i=1}^m k_i$ we can prove that \begin{equation}\varphi(x (w_s\otimes b_t))\in w_s\otimes (x b_t)+W\otimes \sum_{i<n} (U_i B),\end{equation} from which we may easy deduce that $\varphi$ is bijective.
%\end{proof}

Let ${\mathfrak g}={\mathfrak a}\ltimes{\mathfrak b}$ be a Lie superalgebra where ${\mathfrak a}$ is a subalgebra  of ${\mathfrak g}$ and ${\mathfrak b}$ is an ideal of ${\mathfrak g}$. Let $M$ be a  ${\mathfrak g}$-module with a   ${\mathfrak b}$-submodule $H$ so that the ${\mathfrak b}$-submodule structure on $H$ can be extended to a ${\mathfrak g}$-module structure on $H$. We denote this ${\mathfrak g}$-module by $H^{\mathfrak g}$. For any ${\mathfrak a}$-module   $U$, we can make it into a ${\mathfrak g}$-module by ${\mathfrak b}U=0$. We denote this ${\mathfrak g}$-module by $U^{\mathfrak g}$. Then by Lemma \ref{tensor-simple} we have

\begin{coro}\label{simple-app} Let ${\mathfrak g}={\mathfrak a}\ltimes{\mathfrak b}$ be a countable dimensional Lie superalgebra.
Let $M$ be a  simple ${\mathfrak g}$-module
with a strictly simple  ${\mathfrak b}$-submodule $H$ so that an $H^{\mathfrak g}$ exists.
Then $M\cong U^{\mathfrak{g}}\otimes H^{\mathfrak{g}}$ as $\mathfrak{g}$-modules for some  simple  ${\mathfrak a}$-module $U$.
\end{coro}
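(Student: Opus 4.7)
The plan is to realise $M$ explicitly as a tensor product by constructing the $\mathfrak a$-multiplicity of $H$ in $M$, and then to invoke Lemma~\ref{tensor-simple} together with Schur's lemma for the strictly simple $\mathfrak b$-module $H$ to force both the tensor factorisation and the simplicity of $U$.

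First I would set $U:=\mathrm{Hom}_{U(\mathfrak b)}(H,M)$, which is nonzero since the inclusion $H\hookrightarrow M$ lies in it. The key point is to equip $U$ with an $\mathfrak a$-action. For homogeneous $a\in\mathfrak a$ and $\phi\in U$, define
$$(a\cdot\phi)(h):=a\cdot\phi(h)-(-1)^{|a||\phi|}\phi(a\cdot h),$$
where the $a$ inside $\phi$ acts through the extension $H^{\mathfrak g}$. Two verifications are needed: (i) $a\cdot\phi$ remains $\mathfrak b$-linear, where one crucially uses that $\mathfrak b$ is an ideal so that $[a,b]\in\mathfrak b$ and $\phi$'s $\mathfrak b$-linearity closes up the commutator calculation; and (ii) this assignment satisfies the super-Jacobi identity, by a direct super-commutator computation. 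Since $\phi$ is already $\mathfrak b$-linear, the analogous formula with $b\in\mathfrak b$ in place of $a$ gives $b\cdot\phi=0$, so the $\mathfrak a$-action automatically extends (trivially on $\mathfrak b$) to the $\mathfrak g$-module $U^{\mathfrak g}$.

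Next I would introduce the evaluation map $\Phi:U^{\mathfrak g}\otimes H^{\mathfrak g}\to M$, $\phi\otimes h\mapsto\phi(h)$. The definition of $a\cdot\phi$ is rigged precisely so that $\Phi$ is $\mathfrak g$-equivariant: on $\mathfrak b$ this is immediate from $\mathfrak b$-linearity of $\phi$, and on $\mathfrak a$ the extra $\phi(a\cdot h)$ terms cancel against the diagonal action on the tensor product. Because $\Phi$ is nonzero and $M$ is simple, $\Phi$ is surjective. For injectivity I would argue: any nonzero $\phi\in U$ is injective, since $\ker\phi$ is a $\mathfrak b$-submodule of $H$ and thus $0$ by strict simplicity. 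Strict simplicity of $H$ also gives $\mathrm{End}_{\mathfrak b}(H)=\mathbb C$ via Schur, and I would then show by induction that if $\phi_1,\dots,\phi_n\in U$ are $\mathbb C$-linearly independent, the submodules $\phi_i(H)\subseteq M$ form a direct sum---otherwise projections onto the $\phi_i(H)$ produce scalars exhibiting a nontrivial relation among the $\phi_i$. Consequently $\sum_i\phi_i(h_i)=0$ with independent $\phi_i$ forces every $h_i=0$, so $\Phi$ is an isomorphism.

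Finally, simplicity of $U$ as an $\mathfrak a$-module follows from simplicity of $M$ through $\Phi$: any nonzero $\mathfrak a$-submodule $U'\subseteq U$ yields a nonzero $\mathfrak g$-submodule $(U')^{\mathfrak g}\otimes H^{\mathfrak g}$ of $M$, which must equal all of $M$, forcing $U'=U$. Equivalently, this last step can be read as an application of Lemma~\ref{tensor-simple}(2) to the $U(\mathfrak a)\otimes U(\mathfrak g)$-module structure on $U^{\mathfrak g}\otimes H^{\mathfrak g}$, noting that $H^{\mathfrak g}$ is strictly simple as a $U(\mathfrak g)$-module (any $\mathfrak g$-stable subspace is in particular $\mathfrak b$-stable, hence $0$ or $H$) and that $U(\mathfrak g)$ has countable dimension. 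The main obstacle I anticipate is the sign bookkeeping in defining the $\mathfrak a$-action on $U$ and in checking $\mathfrak g$-equivariance of $\Phi$; once that is handled carefully, strict simplicity of $H$ over $\mathfrak b$ does the rest of the work via Schur.
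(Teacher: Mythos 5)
Your proposal is correct, and it takes a genuinely more self-contained route than the paper. The paper disposes of this corollary in one line, by appealing to Lemma~\ref{tensor-simple} (i.e.\ [XL, Lemma 2.2]); but to apply that lemma to $M$ one must first realize $M$ as a module over a tensor product $U(\mathfrak{a})\otimes U(\mathfrak{g})$ (equivalently, produce an $\mathfrak{a}$-action on a multiplicity space commuting suitably with the given action), a reduction the paper leaves implicit and, in its concrete applications, effects with explicit commutant operators such as \eqref{rep2}. What you do instead is run the Clifford-theory argument directly: set $U=\operatorname{Hom}_{\mathfrak{b}}(H,M)$ with the internal-hom action of $\mathfrak{a}$ (using that $\mathfrak{b}$ is an ideal and that the $\mathfrak{a}$-action on $H$ comes from $H^{\mathfrak{g}}$), check the evaluation map $U^{\mathfrak{g}}\otimes H^{\mathfrak{g}}\to M$ is $\mathfrak{g}$-equivariant, get surjectivity from simplicity of $M$, injectivity from Schur plus the direct-sum-of-images induction, and simplicity of $U$ back from simplicity of $M$. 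This in effect reproves the needed instance of Lemma~\ref{tensor-simple} in the semidirect-product setting, and it supplies exactly the commuting structure one would need to quote the lemma anyway; so your argument is, if anything, more complete than the paper's citation, at the cost of carrying the super-sign bookkeeping yourself.

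Two details to pin down when writing it up, neither of which is a real gap. First, the step $\operatorname{End}_{\mathfrak{b}}(H)=\mathbb C$ is Schur's lemma in Dixmier's form and needs $H$ to be of countable dimension; this is precisely where the countable-dimensionality hypothesis on $\mathfrak{g}$ enters, via $H\subseteq M=U(\mathfrak{g})v$. Second, carry out the injectivity induction with \emph{homogeneous} $\phi_i$ only: for a non-homogeneous super-equivariant map the kernel and image need not be $\mathfrak{b}$-stable, whereas for homogeneous $\phi_i$ each $\phi_i(H)$ is a strictly simple $\mathfrak{b}$-submodule; and when $\phi_i$ and $\phi_n$ have opposite parities the map $\phi_i^{-1}\circ p_i\circ\phi_n$ is an odd super-equivariant endomorphism of the strictly simple $H$, which (composing with the parity involution and applying Schur again) must vanish, so the scalar-relation argument still closes. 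With those conventions the signs work out exactly as you anticipated.
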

%\begin{proof}
%Define the one-dimensional ${\mathfrak b}$-module $\mathbb C v_0$ by ${\mathfrak b} v_0=0$. Then $H\cong H\otimes \mathbb C v_0$ as ${\mathfrak b}$-modules. Now from Lemma \ref{induced}, we have $${\rm Ind}_{{\mathfrak b}}^{\mathfrak{g}}H\cong {\rm Ind}_{{\mathfrak b}}^{\mathfrak{g}}\left(H\otimes \mathbb C v_0\right)\cong H^{\mathfrak{g}}\otimes {\rm Ind}_{{\mathfrak b}}^{\mathfrak{g}}\mathbb C v_0 .$$ Note that ${\rm Ind}_{{\mathfrak b}}^{\mathfrak{g}}\mathbb C v_0 \cong W^{\mathfrak{g}}$ for the universal ${\mathfrak a}$-module $W$. Since $M$ is a simple quotient of ${\rm Ind}_{{\mathfrak b}}^{\mathfrak{g}}H$, from Lemma \ref{tensor-simple} we know that there is a simple quotient ${\mathfrak a}$-module $U$ of $W$ such that $M\cong H^{\mathfrak{g}}\otimes U^{\mathfrak{g}}$ as $\mathfrak{g}$-modules. Now the theorem follows.\end{proof}

For any $z\in\mathbb C$, $H\in{\mathcal R}_{\mathfrak {hc}}$ with the action of ${\bf c}_3$ as a nonzero scalar $\ell$,
similar to (4.2), (4.3), (4.4a), (4.4b) in \cite{KT} where they only considered highest weight modules, here  we can also give  a ${{\frak g}}$-module structure on $H$
(denoted by $H(z)^{{\frak g}}$) via the following map
\begin{align}
&L_n\mapsto \bar{L}_n=\frac{1}{2\ell}\sum_{k\in \mathbb Z}:H_{k} H_{-k+n}:-\frac{(n+1)z}{\ell}H_n
-\frac{1}{2\ell}\sum_{r\in \mathbb Z+\frac12}(r+\frac12):F_{r} F_{-r+n}:,\nonumber\\
&G_r\mapsto \bar{G}_r=\frac{1}{\ell}\sum_{k\in \mathbb Z}H_{k} F_{-k+r}-\frac{(2r+1)z}{\ell}F_r, \
H_n\mapsto H_n,\ F_r\mapsto F_r, \nonumber\\
&{\bf c}_1\mapsto\frac32-\frac{12z^2}{\ell}, \  {\bf c}_2\mapsto z, \ {\bf c}_3\mapsto \ell\label{rep2}
\end{align}
for all $n\in\mathbb Z, r\in \mathbb Z+\frac12$, where the normal order is defined as $$:H_iH_j:=\left\{
                                       \begin{array}{cc}
                                        H_i H_j,&\mbox{if}\,\, i<j, \\
                                       H_j H_i, & \,\,\mbox{otherwise.}\\
                                     \end{array}
                                    \right.$$
                                    $$:F_rF_s:=\left\{
                                       \begin{array}{cc}
                                        F_r F_s,&\mbox{if}\,\, r<s, \\
                                       -F_s F_r, & \,\,\mbox{otherwise.}\\
                                     \end{array}
                                    \right.$$

%Moreover the above operators satisfy the following relations
%\begin{equation}\label{N=1super'}
%\aligned
%& [\bar L_m,L_n]=(m-n)\bar L_{n+m}+\delta_{m+n, 0}{1\over12}(m^3-m)(\frac32-\frac{12z^2}{\ell}),\\
%& [\bar L_m,G_p]=(\frac m2-p)\bar G_{p+m}, \\ 
%& [ L_m,\bar G_p]=(\frac m2-p)\bar G_{p+m}, \\ 
%& [\bar G_p, G_q]=2\bar L_{p+q}+\frac13\delta_{p+q, 0}(p^2-\frac14)(\frac32-\frac{12z^2}{\ell}).\endaligned
%\end{equation}
Applying the above corollary to $\mathfrak{g}={\frak s}\ltimes {\mathfrak {hc}}$, we have the following results.

\begin{coro}\label{tensor} Let $V$ be a simple smooth ${\mathfrak{g}}$-module with central charge $(c, z)$ at nonzero level. Then $V\cong  U^{\mathfrak{g}}\otimes H(z)^{\mathfrak{g}}$ as a $\mathfrak{g}$-module for some simple ${\frak s}$-module $U\in {\mathcal R}_{\frak s}$ and some  simple module $H\in{\mathcal R}_{\mathfrak {hc}}$  if and only if $ V$ contains
 a simple ${\mathfrak {hc}}$-submodule $H\in{\mathcal R}_{\mathfrak {hc}}$.
 \end{coro}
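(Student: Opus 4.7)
The assertion is an ``if and only if'' statement, and the two directions have very different weight. For the forward direction, assume $V\cong U^{\mathfrak g}\otimes H(z)^{\mathfrak g}$. By the construction of $U^{\mathfrak g}$ (an $\mathfrak s$-module extended trivially so that ${\mathfrak{hc}}$ and $\mathbf{c}_2$ act as $0$), for any nonzero $u\in U$ the subspace $\mathbb Cu\otimes H\subset V$ is stable under ${\mathfrak{hc}}$. The map $h\mapsto u\otimes h$ is an ${\mathfrak{hc}}$-module isomorphism from $H$ onto $\mathbb Cu\otimes H$, so the latter is a simple ${\mathfrak{hc}}$-submodule of $V$ lying in $\mathcal R_{\mathfrak{hc}}$. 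This settles the ``only if'' part.

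For the reverse direction, suppose $V$ contains a simple ${\mathfrak{hc}}$-submodule $H\in\mathcal R_{\mathfrak{hc}}$. Since the level of $V$ is $\ell\neq 0$, the central element $\mathbf{c}_3$ acts on $H$ as the scalar $\ell$, which is precisely the hypothesis needed to invoke the Sugawara-type extension \eqref{rep2}. This extension promotes the ${\mathfrak{hc}}$-module $H$ to a $\mathfrak g$-module $H(z)^{\mathfrak g}$ on the same underlying space; in particular the formulas define an action of $\mathbf{c}_1,\mathbf{c}_2$ as prescribed scalars. I would then apply Corollary \ref{simple-app} to the semidirect product $\mathfrak g=\mathfrak a\ltimes\mathfrak b$ in which $\mathfrak a=\mathfrak s$ and $\mathfrak b={\mathfrak{hc}}\oplus\mathbb C\mathbf{c}_2$ is the ideal (the central $\mathbf{c}_2$ has to be absorbed on the ${\mathfrak{hc}}$ side so that $V$ becomes a $\mathfrak g$-module in the sense of the corollary). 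Corollary \ref{simple-app} immediately yields $V\cong U^{\mathfrak g}\otimes H(z)^{\mathfrak g}$ for some simple $\mathfrak a$-module $U$, which one reads off as a simple $\mathfrak s$-module with $\mathbf{c}_1$ acting as $c-\tfrac{3}{2}+\tfrac{12z^2}{\ell}$. Smoothness of $U$ then follows from smoothness of $V$, since a vector annihilated by large positive $\mathfrak g$-modes must be annihilated by large positive $\mathfrak s$-modes once one passes to the tensor factor.

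\textbf{Main obstacle.} The one genuinely delicate point is that Corollary \ref{simple-app}, via Lemma \ref{tensor-simple}, demands that the chosen ${\mathfrak{hc}}$-submodule $H$ be \emph{strictly} simple, not merely simple in the super sense. For a $\mathbb Z_2$-graded ${\mathfrak{hc}}$-module, graded simplicity does not automatically entail simplicity after forgetting the grading, so I would need to rule out a splitting $H=H_{\bar 0}\oplus H_{\bar 1}$ as ungraded ${\mathfrak{hc}}$-submodules. With $\ell\neq 0$, the odd generators $F_r$ together with the canonical anticommutation relations force each $F_r$ to act nontrivially between $H_{\bar 0}$ and $H_{\bar 1}$, and one expects this to preclude any such decomposition. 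The cleanest execution of this step is to appeal to the explicit Fock-type classification of simple smooth ${\mathfrak{hc}}$-modules at nonzero level (provided elsewhere in Section 3), from which strict simplicity is read off directly.
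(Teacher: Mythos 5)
Your argument is correct and is essentially the paper's own proof: the ``only if'' direction via observing that $u\otimes H(z)^{\mathfrak g}$ is a simple smooth ${\mathfrak{hc}}$-submodule for any nonzero $u\in U$, and the ``if'' direction via Corollary \ref{simple-app} applied to $\mathfrak s\ltimes{\mathfrak{hc}}$ (with ${\bf c}_2$ acting by the scalar $z$), using \eqref{rep2} to produce $H(z)^{\mathfrak g}$ at nonzero level. The delicate point you flag -- strict simplicity of the simple smooth ${\mathfrak{hc}}$-submodule -- is handled in the paper exactly as you propose, by citing the classification of simple smooth ${\mathfrak{hc}}$-modules (Section 3 of \cite{XZ} and Theorem \ref{smooth-hc}).
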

\begin{proof} ($\Leftarrow$). This follows from Corollary \ref{simple-app} and the fact that any simple smooth $\frak{hc}$-module $H$ is  strictly simple (see Section 3 of \cite{XZ}, also see Proposition \ref{smooth-hc} below).

($\Rightarrow$). This follows from the fact that $u \otimes H(z)^{\mathfrak{g}}$ is a simple ${\mathfrak {hc}}$-submodule of $U^{\mathfrak{g}} \otimes H(z)^{\mathfrak{g}}$ for any nonzero $u\in U$.
\end{proof}

\subsection{Simple modules in ${\mathcal R}_{{\mathfrak {hc}}}$}

For any $m\in\mathbb Z_+$, we define $$\mathfrak {hc}_{[m]}={\rm span}\{H_i, F_{j+\frac12}, 
{\bf c}_3|-m\le i\le m, -m\le j\le m-1\}.$$
      % $${\mathfrak {hc}}^{(-m)}={\rm span}\{H_i, F_{i+\frac12}, {\bf c}_3|i\ge -m, i\in \mathbb Z\}.$$

\begin{theo}\label{smooth-hc}
Let $B$, $B'$ be simple modules over $\mathfrak {hc}_{[m]}$ for some $m\in \mathbb Z_+$  with  nonzero action of ${\bf c}_3$.

{\rm (a)} The ${\mathfrak {hc}}$-module ${\rm Ind}_{{\mathfrak {hc}}^{(-m)}}^{\mathfrak {hc}} B$ is simple, where $B$ is
regarded as ${\mathfrak {hc}}^{(-m)}$-module by $H_i B=0=F_{i+\frac12} B, \forall i>m$. Moreover, all
nontrivial simple   modules in ${\mathcal R}_{\mathfrak {hc}}$ can be obtained in
this way.

{\rm (b)} As ${\mathfrak {hc}}$-modules, ${\rm Ind}_{{\mathfrak {hc}}^{(-m)}}^{\mathfrak {hc}} B\cong {\rm Ind}_{{\mathfrak {hc}}^{(-m)}}^{\mathfrak {hc}} B'$
if and only if $B\cong B'$ as $\mathfrak {hc}_{[m]}$-modules.

{\rm (c)}  Let $V$ be a simple smooth ${\mathfrak {hc}}$-module with  nonzero action of ${\bf c}_3$, then  $V\cong H\otimes K $ for some simple
smooth $\frak h$-module $H$ and some simple smooth $\frak f$-module $K$, where the actions are as follows:
$$x(u\otimes w)=\left\{\begin{matrix}
xu\otimes w & \text{ if }x\in \frak{h}, \\
u\otimes xw & \text{ if }x\in \frak{f}. \\
\end{matrix}\right.$$

\end{theo}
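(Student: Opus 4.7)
The plan is to establish (a) first by a PBW degree-reduction argument, deduce (b) immediately, and obtain (c) from (a) via Lemma \ref{tensor-simple}. The structural observation underlying everything is that $[H_m, F_p] = 0$ in $\mathfrak{hc}$, so the subalgebras $\mathfrak{h}$ and $\mathfrak{f}$ commute and share only ${\bf c}_3$; the analogous statement holds for the finite-dimensional $\mathfrak{hc}_{[m]} = \mathfrak{h}_{[m]} + \mathfrak{f}_{[m]}$, where $\mathfrak{h}_{[m]} := \mathrm{span}\{H_{-m}, \ldots, H_m, {\bf c}_3\}$ and $\mathfrak{f}_{[m]} := \mathrm{span}\{F_{-m+1/2}, \ldots, F_{m-1/2}, {\bf c}_3\}$. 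Throughout, the nonvanishing of ${\bf c}_3 = \ell$ is what drives the reductions.

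To show $M := {\rm Ind}_{\mathfrak{hc}^{(-m)}}^{\mathfrak{hc}} B$ is simple, I would take $0 \ne w \in M$, put it in PBW normal form $w = \sum H^{\mathbf{j}} F^{\mathbf{l}} b_{\mathbf{j},\mathbf{l}}$ with monomials involving only modes of degree $< -m$, and select the maximal pair $(\mathbf{j}_0, \mathbf{l}_0)$ in the support under a suitable lexicographic order. I would then apply a carefully chosen product of operators from $\mathfrak{hc}^{(m+1)}$, exploiting $[H_i, H_{-i}] = i{\bf c}_3$ (acting as $i\ell \ne 0$) and $[F_{i+1/2}, F_{-i-1/2}] = {\bf c}_3$ (acting as $\ell \ne 0$), to successively cancel the negative modes in $H^{\mathbf{j}_0} F^{\mathbf{l}_0}$. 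Since these positive-mode operators annihilate $B$, their commutators with the monomials in the sum either vanish or produce strictly shorter monomials, so after finitely many steps the only surviving contribution is a nonzero scalar multiple of $b_{\mathbf{j}_0, \mathbf{l}_0} \in B$. Simplicity of $B$ under $\mathfrak{hc}_{[m]}$ combined with a second application of PBW then gives $U(\mathfrak{hc}) w = M$.

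For the classification half of (a), I would take a simple smooth $V$, use smoothness to pick the smallest $m$ with $V_m := \{v \in V : \mathfrak{hc}^{(m+1)} v = 0\}$ nonzero, and note $[\mathfrak{hc}_{[m]}, \mathfrak{hc}^{(m+1)}] = 0$ by direct inspection (the $\delta$-terms in the Heisenberg and Clifford relations vanish on these index ranges), which makes $V_m$ a $\mathfrak{hc}_{[m]}$-submodule of $V$. The key claim is that $V_m$ is \emph{itself} simple over $\mathfrak{hc}_{[m]}$: if $0 \ne B \subsetneq V_m$ were a proper submodule, simplicity of $V$ would give $V = U(\mathfrak{hc}) B$, and since $\mathfrak{hc}^{(m+1)} B = 0$ while $\mathfrak{hc}_{[m]}$ commutes with the modes of degree $< -m$, PBW would express $V$ as the action of those negative modes on $B$; running the degree-reduction of the previous paragraph on any element of $V_m \setminus B$ written in this form would force it into $B$, a contradiction. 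Hence $V_m$ is simple and the natural nonzero map ${\rm Ind}(V_m) \to V$ is surjective by simplicity of $V$ and injective by the first half of (a). Part (b) follows at once since $B$ is recovered from ${\rm Ind}(B)$ intrinsically as the $\mathfrak{hc}^{(m+1)}$-annihilator, so any isomorphism of induced modules must restrict to an isomorphism $B \cong B'$ of $\mathfrak{hc}_{[m]}$-modules.

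For (c), by (a) I may assume $V \cong {\rm Ind}_{\mathfrak{hc}^{(-m)}}^{\mathfrak{hc}} B$ for a simple $\mathfrak{hc}_{[m]}$-module $B$. Since $\mathfrak{h}_{[m]}$ and $\mathfrak{f}_{[m]}$ commute and share only ${\bf c}_3$ acting as $\ell$, $B$ is a module over $U(\mathfrak{h}_{[m]}) \otimes U(\mathfrak{f}_{[m]})$ after identifying the two copies of ${\bf c}_3$. The simple $\mathfrak{f}_{[m]}$-modules at level $\ell \ne 0$ are the finite-dimensional strictly simple spinor modules (the quotient of $U(\mathfrak{f}_{[m]})$ by ${\bf c}_3 - \ell$ is a matrix algebra by a standard finite Clifford computation), so $B$ contains such a strictly simple $\mathfrak{f}_{[m]}$-submodule $K_0$, and Lemma \ref{tensor-simple}(2) decomposes $B \cong H_0 \otimes K_0$ for a simple $\mathfrak{h}_{[m]}$-module $H_0$. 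Inducing separately along the commuting subalgebras $\mathfrak{h}$ and $\mathfrak{f}$ (using Lemma \ref{induced} and PBW) then yields $V \cong H \otimes K$ with $H, K$ simple smooth over $\mathfrak{h}, \mathfrak{f}$ respectively, and the stated action formulas are immediate. The main obstacle throughout is the PBW degree-reduction step in (a): in the super setting it requires careful sign bookkeeping from the Clifford modes and a judicious choice of total order so that the leading term is neither cancelled nor killed by lower-order contributions at each step; once that is in place, everything else is largely formal.
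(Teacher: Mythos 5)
Your proposal is correct in substance, but it takes a genuinely different route from the paper's. The paper never runs a leading-term reduction for $\mathfrak{hc}$ directly: it first splits $\mathfrak {hc}_{[m]}$ (resp. $\mathfrak {hc}$) as a quotient of $\frak h_{[m]}\oplus\frak f'_{[m]}$ (resp. $\frak h\oplus\frak f'$, with $\frak f'$ an integer-relabelled Clifford algebra), invokes Lemmas 3.8 and 3.9 of \cite{XZ} to produce a strictly simple fermionic submodule, applies Corollary \ref{simple-app} to factor $B=B_1\otimes B_2$ and $V=W_1\otimes W_2$, and then settles the classification half of (a) by citing the known classifications of simple smooth $\frak h$-modules (\cite{LvZ}) and fermionic modules (\cite{XZ}); part (b) is done by identifying $B$ with the socle of the restriction. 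You instead prove the simplicity in (a) by a direct PBW leading-term cancellation using $[H_i,H_{-i}]=i{\bf c}_3$ and $[F_r,F_{-r}]={\bf c}_3$, obtain the classification half intrinsically (minimal annihilator subspace $V_m$, shown to be a simple $\mathfrak{hc}_{[m]}$-module with ${\rm Ind}(V_m)\cong V$), get (b) by recovering $B$ inside ${\rm Ind}\,B$ as the annihilator of the high positive modes (equivalent to the paper's socle argument), and deduce (c) from the induced realization plus the elementary fact that $U(\frak f_{[m]})/({\bf c}_3-\ell)$ is a matrix (super)algebra, so Lemma \ref{tensor-simple} applies. Your route buys self-containedness: it does not lean on the external structural and classification results from \cite{XZ} and \cite{LvZ}, only on Lemma \ref{tensor-simple} and finite Clifford theory, at the price of doing the super sign bookkeeping and the order-theoretic reduction by hand. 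The paper's route buys brevity, and it yields the tensor factorization of (c) directly at the level of the infinite algebras rather than through the induced picture.

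A few points to tighten if you write this up: state the leading-term lemma for ${\rm Ind}_{{\mathfrak {hc}}^{(-m)}}^{\mathfrak {hc}}B$ without assuming $B$ simple, since you use it both to prove that $V_m$ is simple (and that ${\rm Ind}(V_m)\to V$ is injective) and in (b); in the classification half, the minimal admissible $m$ can be $0$ (highest weight case), and since the theorem requires $m\in\mathbb Z_+$ you should pass to $m=1$ there; record that ${\bf c}_3$ acts on $B$ and on $V$ as a scalar (Schur's lemma for countable-dimensional algebras), which is what allows you to form $U(\frak f_{[m]})/({\bf c}_3-\ell)$; and in (c) justify explicitly that $H={\rm Ind}\,H_0$ and $K={\rm Ind}\,K_0$ are simple, either by repeating the reduction in each factor or by using strict simplicity of $K$ together with Lemma \ref{tensor-simple}(1).
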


\begin{proof}% Let ${\mathfrak {hc}}_{< -m}={\rm span}\{H_{-m-j}, F_{-m-j+\frac12}|j\in \mathbb Z_+\}.$
We define the new Lie superalgebra $\frak f'=\{F_r, {\bf c}'_3|r\in \mathbb{Z}\}$ with brackets:
$$ 
  [F_m, F_n]=\delta_{m+n, 0}{\bf c}'_3,$$
and the subalgebras $\frak f'_{[m]}=\{F_r, {\bf c}'_3|-m+\frac12\le r\le m-\frac12\}$, 
$\frak{h}_{[m]}={\rm span}\{H_i , 
{\bf c}_3|-m\le i\le m\}$.
Then 
$$\mathfrak {hc}_{[m]} =(\frak h_{[m]}\oplus\frak f'_{[m]})/\langle {\bf c}_3-{\bf c}_3'\rangle.$$
Now $B, B'$ can be considered as modules over the superalgebra $(\frak h_{[m]}\oplus\frak f'_{[m]})$. From Lemma 3.8
in \cite{XZ}, we know that there is a strictly simple $\frak f'_{[m]}$-module $B_2$ contained in $B$. From Corollary \ref{simple-app}, we know that there is a simple $\frak h_{[m]}$-module $B_1$ such that $B=B_1\otimes B_2$ as module over the superalgebra $(\frak h_{[m]}\oplus\frak f'_{[m]})$.

(a) Using PBW Theorem and nonzero action of ${\bf c}_3$, we can easily
prove that the ${\mathfrak {hc}}$-module ${\rm Ind}_{{{\mathfrak {hc}}}_{[m]}}^{\mathfrak {hc}} B ={\rm Ind}_{{{\frak h}}_{[m]}}^{\frak h} B_1\otimes {\rm Ind}_{{{\frak f}}'_{[m]}}^{\frak h} B_2$
is simple.

Now suppose that $V\in {\mathcal R}_{\mathfrak {hc}}$ is simple and nontrivial. From Lemma 3.9
in \cite{XZ}, we know that there is a simple $\frak f'$-submodule $W_2$   in $V$.
From Corollary \ref{simple-app}, we know that there is a simple $\frak h$-module $W_1$ such that $V=W_1\otimes W_2$ as module over the superalgebra $(\frak h\oplus\frak f')$. Combining this with  Lemma 3.6
in \cite{XZ} and  Proposition 9
in \cite{LvZ} we obtain the result.

(b) Noting that $B$ and $ B'$ are the socles of ${\mathfrak {hc}}^{(-m)}$-modules
${\rm Ind}_{{\mathfrak {hc}}^{(-m)}}^{\mathfrak {hc}} B$ and ${\rm Ind}_{{\mathfrak {hc}}^{(-m)}}^{\mathfrak {hc}} B'$ respectively, we see
that, if ${\rm Ind}_{{\mathfrak {hc}}^{(-m)}}^{\mathfrak {hc}} B\cong {\rm Ind}_{{\mathfrak {hc}}^{(-m)}}^{\mathfrak {hc}} B'$  as
${\mathfrak {hc}}^{(-m)}$-modules, hence  $B\cong B'$ as $\mathfrak {hc}_{[m]}$-modules. The
converse is trivial.

(c) 
Note that  $\mathfrak {hc}=(\frak h\oplus\frak f')/\langle {\bf c}_3-{\bf c}_3'\rangle.$ Then $V$ is naturally considered as a $(\frak h\oplus\frak f')$-module. By   Lemma 3.9 in \cite{XZ}, there is a strictly simple smooth $\frak f'$-submodule $K$ in $V$. Using Corollary \ref{simple-app}, we obtain the statement.  
\end{proof}

Note that, simple
smooth $\frak h$-modules were classified in \cite{LvZ}, while 
simple smooth $\frak f$-modules were classified in \cite{XZ}.
Combining with the above proposition,  we have   all simple smooth modules over   the Heisenberg-Clifford superalgebra $\mathfrak {hc}$.

\begin{exam} Take $m=1$, make $\mathbb C v$ into a module over $\mathfrak{b}=\mathbb C\{H_0, H_{-1}, F_{-\frac12}\}+\mathbb C {\bf c}_3$ by  $H_0v=dv, H_{-1}v=av, F_{-\frac12}v=0,  {\bf c}_3v=\ell v$ for $a, d, \ell\in\mathbb C$ with $\ell\ne0$. We have the simple ${\mathfrak {hc}}_{[1]}$-module $B={\rm Ind}^{{\mathfrak {hc}}_{[1]}}_\mathfrak{b}\mathbb C v$. Then we obtain the simple   ${\mathfrak {hc}}$-module ${\rm Ind}_{{{\mathfrak {hc}}}^{(-1)}}^{\mathfrak {hc}} B$. In this manner similar to Sect.3.1 in \cite{BBFK}, one can construct a lot of simple nonweight modules in ${\mathcal R}_{{\mathfrak {hc}}}$.
\end{exam}

Now we introduce a result on  Whittaker modules over ${\mathfrak {hc}}$.  Suppose that $\phi: {\mathfrak {hc}}^{(0)}\rightarrow \mathbb C$ is a homomorphism of Lie superalgebras. It follows that $\phi(F_{r})=0$ for all $r>0$. Then $\mathbb C w_{\phi}$ becomes a one-dimensional 
${\mathfrak {hc}}^{(0)}$-module defined by $x w_{\phi}=\phi(x)w_{\phi}$ for all
$x\in {\mathfrak {hc}}^{(0)}$. The induced ${\mathfrak {hc}}$-module
$W_{\phi}={\rm Ind}_{{\mathfrak {hc}}^{(0)}}^{\mathfrak {hc}} \mathbb C w_{\phi}$ is called a
Whittaker module with respect to $\phi$. Note that the  ${\mathfrak {hc}}$-module $W_{\phi}$ is not necessarily a smooth module.

\begin{lemm} \label{H-whittaker}The ${\mathfrak {hc}}$-module $W_{\phi}$ is  simple  if and only if $\phi({\bf c}_3)\ne 0$.\end{lemm}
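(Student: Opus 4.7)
The plan is to work in the PBW basis $\{H^{\mi} F^{\mk} w_\phi : \mi \in \mathbb{M}, \mk \in \mathbb{M}_1\}$ of $W_\phi = U({\mathfrak{hc}}^{-}) w_\phi$ and to argue by induction on the length $\mathrm{w}(\deg_{\frak{hc}}(v))$, using the principal order $\prec$ on $\mathbb{M} \times \mathbb{M}_1$ (whose primary comparison is by the weight $\mathrm{w}$) introduced above.

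Suppose first that $\ell := \phi(\mathbf{c}_3) \neq 0$ and let $N$ be a nonzero submodule of $W_\phi$. I would choose $v \in N \setminus \{0\}$ with $|v|_{\frak{hc}}$ minimal and claim this minimum is $0$, so that $v$ is a nonzero scalar multiple of $w_\phi$ and $N = W_\phi$. Suppose otherwise, and write $\deg_{\frak{hc}}(v) = (\mi_0, \mk_0)$. If $\mi_0 \neq \mathbf{0}$, pick $m > 0$ with $(i_0)_m > 0$ and apply $H_m - \phi(H_m)$ to $v$. Using $[H_m, H_{-j}] = m\delta_{m,j}\ell$, $[H_m, F_r] = 0$, and $H_m w_\phi = \phi(H_m)w_\phi$, iteration shows this operator sends each monomial $H^{\mi} F^{\mk} w_\phi$ to $m\ell\, i_m\, H^{\mi-\epsilon_m} F^{\mk} w_\phi$. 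The leading term contributes a nonzero $m\ell (i_0)_m$ multiple of the new basis vector $H^{\mi_0 - \epsilon_m} F^{\mk_0} w_\phi$, and since $(\mi,\mk) \mapsto (\mi - \epsilon_m, \mk)$ is injective on monomials with $i_m > 0$, no cancellation occurs. Thus $(H_m - \phi(H_m)) v$ is a nonzero element of $N$ of strictly smaller weight, contradicting minimality. If instead $\mi_0 = \mathbf{0}$ but $\mk_0 \neq \mathbf{0}$, pick $r \geq 1$ with $(k_0)_r = 1$ and apply $F_{r-\frac12}$; using $[F_{r-\frac12}, F_{-s+\frac12}] = \delta_{r,s}\ell$, $[F_{r-\frac12}, H_{-j}] = 0$, and $F_{r-\frac12} w_\phi = 0$, an analogous computation (carefully tracking the sign from supercommuting past the intermediate odd factors of $F^{\mk}$) yields a nonzero element of $N$ of strictly smaller weight, again a contradiction.

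For the converse, assume $\phi(\mathbf{c}_3) = 0$. Then the brackets $[H_m, H_{-n}]$, $[F_r, F_{-s}]$, and $[H_m, F_r]$ all vanish on $W_\phi$ for $m, n, r, s > 0$, and a direct PBW induction shows that $H_m$ acts on all of $W_\phi$ as the scalar $\phi(H_m)$ and $F_r$ acts as zero. The subspace
\[
W' \;=\; \mathrm{span}\{ H^{\mi} F^{\mk} w_\phi \,:\, (\mi,\mk) \neq (\mathbf{0},\mathbf{0})\}
\]
is therefore stable under $\frak{hc}^{(0)}$ by the above, and trivially stable under $\frak{hc}^{-}$, so $W'$ is a proper nonzero $\frak{hc}$-submodule of $W_\phi$; hence $W_\phi$ is reducible.

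The main obstacle is the super-sign bookkeeping in the $F$-reduction step of the forward direction: after supercommuting $F_{r-\frac12}$ past the other odd factors in $F^{\mk}$, one must verify that the surviving term has a nonzero coefficient on $H^{\mi_0} F^{\mk_0 - \epsilon_r} w_\phi$ and is not accidentally cancelled by contributions from other, lower-weight monomials in $v$. Injectivity of the shaving map on monomials containing the prescribed negative generator is what makes the induction close cleanly, and the primary-by-weight design of the order $\prec$ ensures that all by-product terms land strictly below the controlled leading term.
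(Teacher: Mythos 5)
Your proof is correct, and it takes essentially the approach the paper intends: the paper disposes of this lemma with ``It follows by direct calculations,'' and your PBW leading-term/shaving argument (apply $H_m-\phi(H_m)$ or $F_{r-\frac12}$ to strictly lower the weight of a minimal-weight element of a submodule when $\phi({\bf c}_3)\ne0$, and exhibit the span of all non-constant PBW monomials as a proper submodule when $\phi({\bf c}_3)=0$) is exactly that calculation, being the same technique the paper uses elsewhere (e.g.\ in the Claim of Proposition \ref{prop-r2-n2}). The sign bookkeeping you flagged does check out, since the shaved monomials $H^{\mi}F^{\mk-\epsilon_r}w_\phi$ are distinct basis vectors, so the per-monomial signs $(-1)^{N_r}$ cannot produce cancellation.
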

\begin{proof}
	It follows by direct calculations.
\end{proof}

\subsection{Simple modules  in $ \mathcal{R}_{\mathfrak{s}}$}

We first recall from \cite{LPX1} the classification for  simple smooth ${\frak s}$-modules.

\begin{theo}\cite{LPX1} \label{SVir-modules} Any  simple smooth ${\frak s}$-module is a highest weight module, or isomorphic to $ {\rm Ind}_{{\frak s}^{(0)}}^{{\frak s}}V$ for a  simple  ${\frak s}^{(0)}$-module $V$ such that for some $k\in {\mathbb Z}_+$,
	
	{\rm (i)} $L_k$ acts injectively on $V$;
	
	{\rm (ii)} $L_iV=0$ for all $i>k\, ($in this case $G_{i-\frac12}V=0$ for all $i>k)$.
\end{theo}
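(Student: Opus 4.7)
The plan is to prove this classification by analyzing a simple smooth $\mathfrak{s}$-module $S$ through the ascending filtration
\begin{equation*}
T_k := \{v \in S : L_i v = 0 \text{ for all } i > k \text{ and } G_r v = 0 \text{ for all } r > k\}, \quad k \in \mathbb{N},
\end{equation*}
which exhausts $S$ by smoothness, each of whose terms being an $\mathfrak{s}^{(0)}$-submodule because $[\mathfrak{s}^{(0)}, \mathfrak{s}^{(k+1)}] \subset \mathfrak{s}^{(k+1)}$. Let $k_0$ denote the smallest $k \geq 0$ with $T_k \neq 0$; the two alternatives of the theorem correspond exactly to the cases $k_0 = 0$ and $k_0 \geq 1$.

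For $k_0 = 0$, the only part of $\mathfrak{s}^{(0)}$ that may act nontrivially on $T_0$ is the abelian subalgebra $\mathbb{C} L_0 \oplus \mathbb{C} \mathbf{c}_1$, so every simple $\mathfrak{s}^{(0)}$-submodule of $T_0$ is one-dimensional. By simplicity of $S$, such a submodule generates $S$, and $S$ is therefore a highest weight module.

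For $k_0 \geq 1$ the first key step is to prove that $L_{k_0}$ acts injectively on $V := T_{k_0}$. I would argue by contradiction: suppose $0 \neq v \in V$ with $L_{k_0} v = 0$, and set $w := G_{k_0 - 1/2} v$. Using
\begin{equation*}
[L_i, G_{k_0 - 1/2}] = \frac{i - 2k_0 + 1}{2}\, G_{i + k_0 - 1/2}, \qquad [G_r, G_{k_0 - 1/2}] = 2 L_{r + k_0 - 1/2}
\end{equation*}
(the potential central term vanishes whenever $k_0 \geq 1$ and $r \geq k_0 - 1/2$), a direct check over the ranges $i \geq k_0$ and $r \geq k_0 - 1/2$ yields $L_i w = 0 = G_r w$, so $w \in T_{k_0 - 1} = 0$. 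Then $G_{k_0 - 1/2} v = 0$, and together with $L_{k_0} v = 0$ this places $v$ itself in $T_{k_0 - 1}$, contradicting minimality of $k_0$. The parenthetical statement in (ii) then holds tautologically for this choice of $V$, since by construction $G_r V = 0$ whenever $r > k_0$.

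The remaining task is to show $V$ is simple as an $\mathfrak{s}^{(0)}$-module and that the canonical map $\mathrm{Ind}_{\mathfrak{s}^{(0)}}^{\mathfrak{s}} V \to S$ is an isomorphism. Simplicity of $S$ gives $U(\mathfrak{s}^-) U(\mathfrak{s}^{(0)}) v = S$ for every $0 \neq v \in V$. Using a principal total order on PBW monomials from $\mathfrak{s}^-$ analogous to the one introduced in Section~2.3, together with the injectivity of $L_{k_0}$ applied through commutators to kill the leading monomial of a minimal counterexample, one forces every element of $V$ to lie in $U(\mathfrak{s}^{(0)}) v$; a symmetric analysis yields the freeness of the $U(\mathfrak{s}^-)$-action, hence the asserted induced-module isomorphism. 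The main obstacle I expect is precisely this last PBW/degree argument: since $G_r^2 = L_{2r}$, a straightforward length induction collapses, and one must simultaneously track the number of odd factors and an integer length while repeatedly invoking $L_{k_0}$-injectivity to eliminate leading terms modulo strictly smaller ones in the chosen order.
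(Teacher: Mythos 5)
Your reduction to the two cases $k_0=0$ and $k_0\ge 1$, and the injectivity argument for $L_{k_0}$ on $T_{k_0}$ via $w=G_{k_0-\frac12}v$, are sound (the bracket computations do close up, and no central terms appear since all indices involved are positive). But there are two genuine gaps. First, in the case $k_0=0$ you deduce that $S$ is a highest weight module from the existence of a simple $\mathfrak{s}^{(0)}$-submodule of $T_0$, yet you never produce one: on $T_0$ the algebra $\mathfrak{s}^{(0)}$ acts through $\mathbb{C}L_0\oplus\mathbb{C}{\bf c}_1$, so what you actually need is an $L_0$-eigenvector inside $T_0$, and a $\mathbb{C}[L_0]$-module need not contain any simple submodule (it can be torsion-free). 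Ruling out the torsion-free possibility is a real step: in the analogous situation this paper has to combine Lemma \ref{free-L0} (which rests on Verma-module theory for $\mathfrak{s}$, i.e.\ the existence of $\lambda_0$ with simple Verma module) with the principal-ideal-domain argument of Lemma \ref{lemma-eigenvalue} to manufacture an eigenvector, and \cite{LPX1} handles the same point through its characterization of highest weight modules (quoted in this paper as \cite[Theorem 4.3]{LPX1}). Without such an argument, ``$S$ is therefore a highest weight module'' does not follow.

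Second, in the case $k_0\ge 1$ the heart of the theorem --- that the canonical epimorphism ${\rm Ind}_{\mathfrak{s}^{(0)}}^{\mathfrak{s}}T_{k_0}\to S$ is injective and that $T_{k_0}$ is a simple $\mathfrak{s}^{(0)}$-module --- is only announced as a plan. You note yourself that the naive length induction collapses because $G_r^2=L_{2r}$; this is exactly the technical core of \cite{LPX1} (the statement, invoked in the proof of Proposition \ref{prop-r2-n2} here, that every nonzero submodule of the induced module meets the inducing space), and carrying it out requires exhibiting a concrete total order on PBW monomials (weight first, then the odd part, then the length of the even part, in the spirit of Section 2.3) together with a verified choice of operators of the form $L_{k_0+n}$ and $G_{k_0+n-\frac12}$ that strictly lower the degree of a minimal counterexample while exploiting the injectivity of $L_{k_0}$. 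As written, your proposal identifies the right strategy but leaves precisely this degree argument, and the eigenvector existence above, unproved; a complete write-up would also record that ${\bf c}_1$ acts as a scalar on $S$ (Schur-type lemma for countable-dimensional algebras) before speaking of highest weight modules.
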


For later convenience, now we establish some basic results about the Verma modules over the super Virasoro algebra.

For the Verma module $M_{{\frak s}}(c,h)$ over the super Virasoro algebra ${\frak s}$,
it is well-known from \cite{A} that there exist two homogeneous elements $P_1, P_2\in U({\frak s}^-){\frak s}^-$
such that $ U({\frak s}^-)P_1w_1+ U({\frak s}^-)P_2w_1$ is the unique maximal proper ${\frak s}$-submodule of $M_{{\frak s}}(c,h)$,
where $P_1, P_2$ are allowed to  be zero and $w_1$ is the
highest weight vector in $M_{{\frak s}}(c,h)$.

\begin{lemm}\label{singular-vector}
Let $k=0,-1$. Suppose $M$ is an ${\frak s}^{(k)}$-module on which $L_0$ and $\bf c_1$ act as multiplication by  given scalars $h$ and $c$ respectively. Then there exists a unique maximal submodule $N$ of ${\rm Ind}^{{\frak s}}_{{\frak s}^{(k)}}M$ with $N\cap M=0$. More precisely, $N$ is generated by $P_1M$ and $P_2M$, i.e., $N= U({\frak s}^-)(P_1M+P_2M)$.
\end{lemm}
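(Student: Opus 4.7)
The plan is to reduce the lemma to the classical structure theory of Verma modules by exploiting that $L_0$ acts as a scalar on $M$; the central observation is that the hypothesis forces $M$ to decompose as a direct sum of one-dimensional ${\frak s}^{(k)}$-modules.

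First I would note that the assumption $L_0 m=hm$ for all $m\in M$ forces $Xm=0$ for every $L_0$-homogeneous $X\in{\frak s}^{(k)}$ of nonzero $L_0$-weight: if $[L_0,X]=dX$ with $d\neq 0$, then $L_0(Xm)=(h+d)Xm$ conflicts with $L_0(Xm)=h\cdot Xm$, forcing $Xm=0$. Consequently only $L_0$ and ${\bf c}_1$ act non-trivially on $M$, both as scalars, and picking any $\mathbb C$-basis $\{e_i\}_{i\in I}$ of $M$ yields a decomposition $M=\bigoplus_i\mathbb C e_i$ as ${\frak s}^{(k)}$-modules, each summand being the unique one-dimensional ${\frak s}^{(k)}$-module of central charge $c$ and $L_0$-weight $h$. (For $k=-1$, the bracket $[L_1,L_{-1}]=2L_0$ together with $L_{\pm 1}M=0$ additionally forces $h=0$ whenever $M\neq 0$.)

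By exactness of induction this yields $V:={\rm Ind}_{{\frak s}^{(k)}}^{\frak s}M=\bigoplus_{i\in I}V_i$, where $V_i:={\rm Ind}_{{\frak s}^{(k)}}^{\frak s}\mathbb C e_i$. For $k=0$ each $V_i\cong M_{\frak s}(c,h)$; for $k=-1$ each $V_i$ is the quotient of $M_{\frak s}(c,0)$ by the submodule generated by $L_{-1}w_1$ and $G_{-1/2}w_1$. Writing $\mathcal M=U({\frak s}^-)P_1 w_1+U({\frak s}^-)P_2 w_1$ for the maximal proper submodule of $M_{\frak s}(c,h)$, its image in $V_i$ is $N_i:=U({\frak s}^-)(P_1 e_i+P_2 e_i)$, and I would set $N:=\bigoplus_i N_i=U({\frak s}^-)(P_1 M+P_2 M)$. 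Each $P_j e_i$ is singular in $V_i$ (that is, ${\frak s}^+ P_j e_i=0$): this follows from the singularity of $P_j w_1$ in the Verma module transported via the PBW expansion $[X,P_j]=\sum_\alpha Q_\alpha T_\alpha$ with $Q_\alpha\in U({\frak s}^-),\,T_\alpha\in U({\frak s}^{(k)})$, since the Verma relation forces each $T_\alpha$ to act trivially on the one-dimensional module $\mathbb C e_i$. Singularity then gives $U({\frak s})(P_j e_i)=U({\frak s}^-)P_j e_i\subset N$, so $N$ is an ${\frak s}$-submodule; and $N\cap M=0$ follows because $V_i/N_i\cong V(c,h)$ has nonzero image of $e_i$.

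For maximality and uniqueness, I note that $V/N\cong V(c,h)^{\oplus|I|}$. Dixmier's lemma applied to the countable-dimensional simple module $V(c,h)$ gives $\mathrm{End}_{\frak s}(V(c,h))=\mathbb C$, so every ${\frak s}$-submodule of $V(c,h)^{\oplus|I|}$ has the form $V(c,h)\otimes S$ for a subspace $S\subset\mathbb C^{|I|}$, meeting the image of $M=\mathbb C w_1\otimes\mathbb C^{|I|}$ in $\mathbb C w_1\otimes S$. Thus a submodule $W\subset V$ with $W\cap M=0$ projects to a submodule of $V/N$ with $S=0$, which is zero, giving $W\subset N$. The hard part will be this last step: classifying the submodules of $V(c,h)^{\oplus|I|}$ uniformly when $I$ may be infinite requires a careful Schur-type argument valid for countable-dimensional simple super Virasoro modules.
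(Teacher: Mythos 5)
Your argument is correct in substance, but it takes a genuinely different route from the paper's. You reduce everything to the classical Verma-module picture: the hypothesis kills every nonzero-weight element of ${\frak s}^{(k)}$ on $M$ (forcing $h=0$ when $k=-1$), so $M$ splits into lines, ${\rm Ind}^{\frak s}_{{\frak s}^{(k)}}M=\bigoplus_i V_i$ with each $V_i$ a Verma module (for $k=0$) or a quotient of $M_{\frak s}(c,0)$ by the submodule generated by $L_{-1}w_1$ and $G_{-1/2}w_1$ (for $k=-1$), and you then prove maximality of $N=\bigoplus_i N_i$ by classifying the submodules of the isotypic quotient $V/N\cong L(c,h)^{\oplus I}$ via a Schur/Dixmier argument. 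The paper instead defines $N$ as the sum of all submodules meeting $M$ trivially and proves $N=U({\frak s}^-)(P_1M+P_2M)$ by a direct element-chasing argument inside the induced module: for a homogeneous $v=\sum_i u_iv_i$ outside $N'$ it produces a homogeneous $u$ with $uu_1v_1=v_1$ and uses that weight-zero operators act on $\mathbb{C}v_i$ inside the highest weight modules $U({\frak s})v_i$, landing a nonzero element of $N\cap M$; this avoids decomposing $M$ and any isotypic bookkeeping, while your version gives a cleaner global picture from which uniqueness of $N$ is immediate. Three points in your write-up need a sentence each to be complete: (1) in the final step, passing from $W\cap M=0$ to $S=0$ requires the $L_0$-grading — since $W$ is graded and $M$ is the entire weight-$h$ space, $W$ lies in weights strictly above $h$, so its (weight-preserving) image in $V/N$ cannot meet the weight-$h$ space; the implication $\pi(W)\cap\pi(M)=0$ is not automatic from $W\cap M=0$ alone. (2) For $k=-1$ you must check that the kernel of $M_{\frak s}(c,0)\to V_i$ lies in the maximal proper submodule, which holds because $G_{-1/2}w_1$ is singular at $h=0$ and $L_{-1}w_1=G_{-1/2}^2w_1$; without this, the claims $V_i/N_i\cong L(c,0)$ and $N\cap M=0$ are unjustified. (3) Your PBW transport of singularity tacitly assumes that $P_1w_1,P_2w_1$ are singular vectors, which is not literally part of the fact quoted from \cite{A}; it is also unnecessary, since $N_i$ is the image of the submodule $U({\frak s}^-)P_1w_1+U({\frak s}^-)P_2w_1$ under the module map $M_{\frak s}(c,h)\to V_i$ and is therefore automatically a submodule. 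With these small repairs your proof stands as a valid alternative to the paper's.
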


\begin{proof} We will follow the proof in \cite[Lemma 4.8]{TYZ}.
Note that $L_0$ acts diagonalizably on ${\rm Ind}^{{\frak s}}_{{\frak s}^{(k)}}M$ and its submodules, and  $$M=\{u\in{\rm Ind}^{{\frak s}}_{{\frak s}^{(k)}}M\mid L_0u=\lambda u\},$$
i.e., $M$ is the highest weight space of ${\rm Ind}^{{\frak s}}_{{\frak s}^{(k)}}M$.
 Let $N$ be the sum of all ${\frak s}$-submodules of ${\rm Ind}^{{\frak s}}_{{\frak s}^{(k)}}M$ which intersect with $M$ trivially. Then $N$ is the desired unique maximal ${\frak s}$-submodule of ${\rm Ind}^{{\frak s}}_{{\frak s}^{(k)}}M$ with $N\cap M=0$.

Let $N^{\prime}$ be the ${\frak s}$-submodule generated by $P_1M$ and $P_2M$, i.e., $N^{\prime}= U({\frak s}^-)(P_1M+P_2M)$. Then $N^{\prime}\cap M=0$. Hence, $N^{\prime}\subseteq N$. Suppose there exists a proper submodule $U$ of ${\rm Ind}^{{\frak s}}_{{\frak s}^{(k)}}M$ such that $U\subset N$ and  $U\not\subset N^{\prime}$.  Choose a nonzero homogeneous $v=\sum _{i=1}^ru_iv_i\in U\setminus N^{\prime}$, where $u_i\in  U({\frak s}^-)$ and $v_1,...v_r\in M$ are linearly independent. Note that all $u_i$ have the same weight. Then some $u_iv_i\notin N^{\prime}$, say $u_1v_1\notin N^{\prime}$.
There is a  homogeneous $u\in  U({\frak s})$ such that $uu_1v_1=v_1$.
Noting that all $uu_i$ has weight $0$ and each $U(\mathfrak s)v_i$ is a highest weight $\mathfrak s$-module, so $uu_iv_i\in \mathbb C v_i$. Thus $uv\in  U\cap M\subset  N\cap M=0$, which is impossible.  This implies that $N\subseteq N^{\prime}$. Hence, $N=N^{\prime}$, as desired.
\end{proof}

\begin{lemm}\label{free-L0}
Let $M$ be an ${\frak s}^{(0)}$-module on which  ${\frak s}^{+}$ acts  trivially and   $\bf c_1$ acts as multiplication by  ascalar  $c$. If any finitely generated $\mathbb C[L_0]$-submodule of $M$ is a free $\mathbb C[L_0]$-module, then any nonzero $\frak s$-submodule  of ${\rm Ind}^{\frak s}_{{\frak s}^{(0)}}M$  intersects with $M$ non-trivially.
\end{lemm}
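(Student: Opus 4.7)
The plan is to argue by contradiction. Suppose $W$ is a nonzero $\frak{s}$-submodule of ${\rm Ind}^{\frak{s}}_{\frak{s}^{(0)}}M$ with $W\cap M=0$, and choose a nonzero $v\in W$ whose degree $\deg_{\frak{s}}(v)=(\mi,\mk)$ is minimal with respect to the principal order $\prec$ on $\mathbb{M}\times\mathbb{M}_1$. Since $W\cap M=0$, one has $(\mi,\mk)\neq (\mathbf{0},\mathbf{0})$, and the coefficient $m_{\mi,\mk}\in M$ of the top PBW monomial $L^{\mi}G^{\mk}$ in $v$ is nonzero. The goal is to produce $y\in \frak{s}^{+}$ such that $yv$ is a nonzero element of $W$ with $\deg_{\frak{s}}(yv)\prec (\mi,\mk)$; this will contradict the minimality of $v$ (even in the extreme case $yv\in M$, which would force the contradiction $yv\in W\cap M=0$).

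Since $\frak{s}^{+}M=0$, applying any $y\in \frak{s}^{+}$ to $v$ reduces, term by term in the PBW expansion $v=\sum u_{\boldsymbol\alpha}\otimes m_{\boldsymbol\alpha}$, to commutators $[y,u_{\boldsymbol\alpha}]$ acting on $m_{\boldsymbol\alpha}$. I would take $y=G_{\hat{\hat{k}}-\frac12}$ when $\mk\neq \mathbf{0}$ (with $\hat{\hat{k}}$ the largest index in $\mk$) and $y=L_{\hat{\hat{i}}}$ otherwise. A direct calculation with the commutation relations of $\frak{s}$ and with Lemma \ref{homo} shows that $[y,L^{\mi}G^{\mk}]$ produces a leading contribution of the form $p(L_0)L^{\mi}G^{\mk^{\prime\prime}}$ or $p(L_0)L^{\mi^{\prime\prime}}G^{\mk}$, where $p(t)\in \mathbb{C}[t]$ is the explicit nonzero polynomial arising from $[L_n,L_{-n}]=2nL_0+\tfrac{n^3-n}{12}c$ or $[G_r,G_{-r}]=2L_0+\tfrac{r^2-1/4}{3}c$ together with the multiplicity of the top index; all other commutator pieces, as well as contributions from the strictly lower PBW monomials of $v$, land at principal degrees strictly below this leading position.

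The main obstacle is ensuring $yv\neq 0$, i.e.\ that $p(L_0)m_{\mi,\mk}$ is nonzero in $M$ and that it is not cancelled by lower-order contributions. This is precisely where the hypothesis is used: the $\mathbb{C}[L_0]$-submodule $\mathbb{C}[L_0]m_{\mi,\mk}\subseteq M$ is finitely generated, hence free by assumption, so $p(L_0)m_{\mi,\mk}\neq 0$ for any nonzero polynomial $p$. To rule out a conspiracy with the lower terms of $v$, I plan a secondary minimization: among all elements of $W$ achieving the minimal principal degree, choose $v$ so that the finitely generated $\mathbb{C}[L_0]$-submodule of $M$ spanned by the nonzero coefficients $\{m_{\boldsymbol\alpha}\}$ has minimal rank, and then use its freeness to see that the top coefficient of $yv$ remains a nonzero $\mathbb{C}[L_0]$-combination of a free generator. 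This yields $yv\in W\setminus\{0\}$ with $\deg_{\frak{s}}(yv)\prec (\mi,\mk)$, contradicting minimality and proving $W\cap M\neq 0$.
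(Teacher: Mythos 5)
Your reduction strategy has a genuine gap at its central step: the claim that after applying the single operator $y=L_{\hat{\hat i}}$ (or $G_{\hat{\hat k}-\frac12}$) ``all other commutator pieces, as well as contributions from the strictly lower PBW monomials of $v$, land at principal degrees strictly below this leading position'' is not true. Take $y=L_N$ with $N=\hat{\hat i}$ and $\mk=\mathbf 0$. A monomial $L^{\mi_1}$ with $\mi_1=\mi-\epsilon_N+\epsilon_a+\epsilon_b-\epsilon_{a+b-N}$, where $a,b<N<a+b$, has the same weight and the same length as $\mi$ and is strictly smaller in the reverse lexicographic (hence principal) order, so it may occur in $\mathrm{supp}_{\frak s}(v)$ below the top term; yet the double contraction $[L_N,L_{-a}]\rightsquigarrow L_{N-a}$, $[L_{N-a},L_{-b}]\rightsquigarrow L_{-(a+b-N)}$ sends $L^{\mi_1}\otimes m_{\mi_1}$ onto the very monomial $L^{\mi-\epsilon_N}$, with a nonzero \emph{constant} coefficient. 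Thus the coefficient of your alleged leading monomial is $p(L_0)m_{\mi,\mk}+\sum c_{\mi_1}m_{\mi_1}$, and since the coefficients of $v$ need not be $\mathbb C[L_0]$-linearly independent (they may all be polynomial multiples of one element of a free basis), freeness does not rule out cancellation; your ``secondary minimization on rank'' does not help, because $W$ gives you no control over how its elements' coefficients are related. Worse, even granting that the top coefficient might die, your contradiction only needs \emph{some} $y\in\frak s^{+}$ with $yv\ne0$; ruling out that $\frak s^{+}v=0$ for an element outside $M$ is precisely the non-existence of singular vectors in ${\rm Ind}^{\frak s}_{{\frak s}^{(0)}}M$, and for the (super) Virasoro algebra this is a nontrivial fact equivalent to generic (non)degeneracy of the Shapovalov form --- it cannot be extracted from the one explicit bracket polynomial $p(L_0)$.

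This is exactly where the paper's proof takes a different, and necessary, route: it writes $u=\sum a_iu_i$ with the $u_i$ a free $\mathbb C[L_0]$-basis of the finitely generated submodule $M_1$ they generate, specializes $L_0$ to a suitable scalar $\lambda_0$ via the quotient $M_1/M_1(\lambda_0)$, and invokes the irreducibility of the corresponding Verma module over ${\frak s}$ for generic $\lambda_0$ (the input from \cite{A,IK1}) to produce a homogeneous $w\in U({\frak s}^{+})$ with $wu=\sum f_i(L_0)u_i$ and $f_1\neq0$; freeness then guarantees $wu\neq0$, giving $0\neq wu\in V\cap M$. In other words, the freeness hypothesis is used to make a \emph{generic specialization} legitimate, not merely to assert $p(L_0)m\neq0$. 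To repair your argument you would need to import some equivalent input (irreducibility of generic Verma modules or the Kac determinant for ${\frak s}$); as written, the proposal does not establish $yv\neq0$ and therefore does not prove the lemma.
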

\begin{proof}  We will follow the proof in  \cite[Lemma 4.10]{TYZ}.
Let $V$ be a nonzero submodule of ${\rm Ind}^{\frak s}_{{\frak s}^{(0)}}M$.
Take a nonzero $u\in V$ and  $u\in V\backslash M$. Write
$u=\sum_{i=1}^n a_iu_i$ where $a_i\in  U({\frak s}^-\oplus \mathbb CL_0)$, $u_i\in M$. Since $M_1:=\sum_{1\le i\le n}\mathbb C[L_0]u_i$ (an ${\frak s}^{(0)}$-submodule of $M$ ) is a finitely generated $\mathbb C[L_0]$-module, we see that $M_1$ is a free module over $\mathbb C[L_0]$ by the assumption. Without loss of generality, we may assume that $M_1=\oplus_{1\le i\le n}\mathbb C [L_0]u_i$ with basis  $u_1,\cdots,u_n$ over $\mathbb C[L_0]$. Note that each $a_i$ can be expressed as a sum of eigenvectors of ${\rm ad}\, L_0$ for $1\leq i\leq n$. Assume that $a_1$ has a maximal eigenvalue among all $a_i$ for $1\leq i\leq n$. Then $a_1u_1\notin M$. For any  $\lambda\in\mathbb C$, let $M_1(\lambda)$ be  the $\mathbb C[L_0]$-submodule  of $M_1$  generated by  $u_2, u_3,\cdots, u_n, L_0u_1-\lambda u_1$. Then $M_1/M_1(\lambda)$ is a one-dimensional
${\frak s}^{(0)}$-module with $L_0(u_1+M_1(\lambda))=\lambda u_1+M_1(\lambda)$. By the Verma module theory for the super Virasoro algebra, we know that there exists    $0\ne \lambda_0\in \mathbb C$ such that the corresponding Verma module $U={\rm Ind}_{{\frak s}^{(0)}}^{\frak s}(M_1/M_1(\lambda_0))$
is irreducible (see \cite{A,IK1}). We know that $u=a_1u_1\ne0$ in $U$.
Hence we can find a  homogeneous $w\in  U({\frak s}^+)$ such that $wa_1u_1=f_1(L_0)u_1$ in ${\rm Ind}^{\frak s}_{{\frak s}^{(0)}}M$, where $0\neq f_1(L_0)\in\mathbb C[L_0]$.
So $wu=\sum_{i=1}^n wa_iu_i=\sum_{i=1}^n f_i(L_0)u_i $ for $f_i(L_0)\in\mathbb C[L_0]$, $1\leq i\leq n$. Therefore, $0\ne wu\in V\cap M_1\subset V\cap M,$ as desired.
\end{proof}

\section{Simple smooth ${\frak g}$-modules with nonzero level}\label{char}

In this section we will determine all simple  smooth ${\frak g}$-modules $M$ of nonerzo level.  

For a given simple smooth ${\frak g}$-module $M$ with level $\ell \not=0$ and central charge  $(c, z)$, From \cite[Lemma 3.9]{XZ}, we know that    ${\rm Ann}_M(\frak f^+)\ne0$. Then there is an $n\in \mathbb N$ such that $$M(n)={\text{Ann}}_M({\rm span}_{\mathbb C}\{H_i |i\ge n\})\cap {\rm Ann}_M(\frak f^+)\ne0.$$ Let 
$n_M=\min\{n\in \mathbb Z:M(n)\ne0\},$ and 
$$ \aligned M_0&={\text{Ann}}_M({\rm span}_{\mathbb C}\{H_i, F_{i-\frac12} |i\ge n_M\}), \text{ if }n_M>0,\\
 M_0&={\text{Ann}}_M({\rm span}_{\mathbb C}\{H_i, F_{ i+\frac12} |i\ge 0\}), \text{ if }n_M=0.\endaligned$$

\begin{lemm}\label{lemma-n} Let $M$ be a
simple smooth $\frak g$-module  with level $\ell \not=0$. Then the following statements hold.
\begin{enumerate}
\item[\rm(i)] $n_M\in\mathbb N$, and $H_{n_M-1}$ acts injectively on $M_0$.
\item[\rm(ii)] $M_0$ is a nonzero $\frak g^{(0,-(n_M-1))}$-module, and is invariant under the action of the operators $\bar L_i, \bar G_{i+\frac12}$  defined in \eqref{rep2} for $i\in\mathbb N$.
\end{enumerate}
\end{lemm}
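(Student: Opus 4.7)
The argument splits into parts (i) and (ii) and exploits the smoothness of $M$ together with \cite[Lemma 3.9]{XZ} to control the annihilator structure. I first verify that $n_M\in\mathbb N$ and $M_0\neq 0$: by \cite[Lemma 3.9]{XZ} there exists a nonzero $v\in{\rm Ann}_M(\frak f^+)$, and smoothness of $M$ then gives $H_iv=0$ for all sufficiently large $i$, so $v\in M(n)$ for some $n\in\mathbb N$. Since $M(n)\subseteq M(n+1)$, the minimum $n_M$ exists and is nonnegative, and comparison of defining conditions gives $M(n_M)\subseteq M_0$, so in particular $M_0\neq 0$.

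For the injectivity of $H_{n_M-1}$ on $M_0$, I argue by contradiction: suppose $0\neq v\in M_0$ satisfies $H_{n_M-1}v=0$. If $n_M\leq 1$, then $v$ itself lies in $M(n_M-1)$, contradicting the minimality of $n_M$. For $n_M\geq 2$ the only annihilations still missing are by $F_p$ with $\tfrac12\leq p\leq n_M-\tfrac32$, so I plan to consider $w:=F_{\frac12}F_{\frac32}\cdots F_{n_M-\frac32}v$. Using $[H_m,F_p]=0$, the vector $w$ is still killed by $H_i$ for $i\geq n_M-1$, and the relations $F_p^2=0$ for $p>0$ together with $F_pv=0$ for $p\geq n_M-\tfrac12$ ensure $F_pw=0$ for all $p\geq\tfrac12$; hence $w\in M(n_M-1)$. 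The main obstacle is establishing $w\neq 0$, which I expect to extract from the Clifford pairing $[F_{-p},F_p]=\mathbf c_3=\ell\neq 0$ together with simplicity of $M$ (for instance, by verifying that an appropriate product of negative-mode $F$'s applied to $w$ recovers a nonzero scalar multiple of $v$).

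For (ii), since $\frak g^{(0,-(n_M-1))}=\frak s^{(0)}\oplus\frak{hc}^{(-(n_M-1))}\oplus\mathbb C\mathbf c_2$, I verify preservation of $M_0$ by direct commutator checks. For each generator $x\in\{L_i,G_{i+\frac12}:i\in\mathbb N\}\cup\{H_j,F_{j+\frac12}:j\geq -(n_M-1)\}$ and each annihilator mode $Y\in\{H_i,F_{i-\frac12}:i\geq n_M\}$, I expand $Y(xv)=[Y,x]v+(-1)^{|Y||x|}xYv$: the second summand vanishes on $v\in M_0$, while the bracket $[Y,x]$ computed via Definition \ref{D1} is either a central scalar (excluded, since the relevant index sum is always strictly positive under the given bounds) or another mode whose index still lies in the annihilated range. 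The only bookkeeping care needed is the half-integer shift between the bosonic and fermionic sectors. Finally, for the derived operators $\bar L_i,\bar G_{i+\frac12}$ of \eqref{rep2} with $i\in\mathbb N$, I first note that on any $v\in M_0$ each Sugawara-type infinite sum collapses to a finite sum: in $:H_kH_{-k+i}:$ at least one of $k,\,-k+i$ exceeds $n_M$ once $|k|$ is large, and similarly for the $F$ piece, so only finitely many terms survive. The vectors $\bar L_iv,\,\bar G_{i+\frac12}v$ are therefore well-defined elements of $M$. To confirm they lie in $M_0$, I use the universal Sugawara identity that makes \eqref{rep2} well-defined on any $\frak{hc}$-module, so that the formal commutators $[Y,\bar L_i]$ and $[Y,\bar G_{i+\frac12}]$ in (a completion of) $U(\frak{hc})$ coincide with $[Y,L_i]$ and $[Y,G_{i+\frac12}]$ in $\frak g$ up to scalars in $z$ and $\ell$; applying these to $v$ reduces the invariance check to the same index-bound analysis used above, yielding $Y\bar L_iv=0$ and $Y\bar G_{i+\frac12}v=0$ for every annihilator mode $Y$.
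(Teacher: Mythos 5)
Your part (ii) is essentially the paper's own argument (direct commutator checks for the generators of $\frak g^{(0,-(n_M-1))}$, plus the observation that $[Y,\bar L_n]$ and $[Y,\bar G_{n+\frac12}]$ coincide with $[Y,L_n]$ and $[Y,G_{n+\frac12}]$ for $Y\in\mathfrak{hc}$, the Sugawara sums being finite on smooth vectors), and it is fine. The gaps are in part (i). First, you never prove $n_M\ge 0$. From \cite[Lemma 3.9]{XZ} and smoothness you only get $M(n)\neq 0$ for some $n\in\mathbb N$, and the inclusion $M(n)\subseteq M(n+1)$ bounds $n_M$ from above, not from below; a priori $M(n)$ could be nonzero for every $n\in\mathbb Z$, so even the existence of the minimum is not settled by your remark. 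The entire content of ``$n_M\in\mathbb N$'' is the lower bound, and this is exactly where $\ell\neq0$ must enter: if $0\neq v$ were annihilated by all $H_i$ with $i\ge -1$, then $\ell v=[H_1,H_{-1}]v=0$, a contradiction, so $M(n)=0$ for all $n\le -1$. This one-line computation, which is the paper's proof, is missing from your proposal.

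Second, your route to the injectivity of $H_{n_M-1}$ on $M_0$ contains a step that fails. For $0\neq v\in M_0$ with $H_{n_M-1}v=0$ and $n_M\ge2$ you set $w=F_{\frac12}F_{\frac32}\cdots F_{n_M-\frac32}v$ and propose to prove $w\neq0$ by applying negative $F$-modes to recover a nonzero multiple of $v$. That recovery does not work: $F_{-p}v$ need not vanish, so for instance $F_{-\frac12}w=\ell\,F_{\frac32}\cdots F_{n_M-\frac32}v\pm F_{\frac12}F_{\frac32}\cdots F_{n_M-\frac32}F_{-\frac12}v$ carries uncontrolled extra terms, and in fact $w$ can honestly be zero (already for $n_M=2$ nothing prevents $F_{\frac12}v=0$). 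The correct repair is to drop the insistence on the full product: among all vectors $F_{p_1}\cdots F_{p_k}v$ with distinct $\frac12\le p_j\le n_M-\frac32$ (allowing the empty product $v$), pick a nonzero one, say $u$, with $k$ maximal. Since positive modes pairwise anticommute and square to zero, $u$ is killed by every $F_p$ with $p>0$, and since $[H_i,F_p]=0$ it is killed by every $H_i$ with $i\ge n_M-1$; hence $0\neq u\in M(n_M-1)$, contradicting the minimality of $n_M$. (The published proof dismisses this point with ``the definition of $n_M$ means that $H_{n_M-1}$ acts injectively on $M_0$,'' so your instinct that the modes $F_{\frac12},\dots,F_{n_M-\frac32}$ must be handled is sound; only your nonvanishing argument needs to be replaced.)
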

\begin{proof}
(i) Assume that $n_M<0$. Take any nonzero $v\in M_0$, we then have
$$v=\frac{1}{\ell}[H_{1},H_{-1}]v=0,$$ a contradiction.  Hence, $n_M\in\mathbb N$.
The definition of $n_M$ means that $H_{n_M-1}$ acts injectively on $M_0$.

(ii) It is obvious that $M_0\neq 0$ by definition. For any $w\in M_0$, $i, k\in\mathbb N$, it is clear that
 $L_iw, G_{i+\frac12}w, H_{i-(n_M-1)}w, F_{i-(n_M-1)+\frac12}w\in M_0$. So $M_0$ is a nonzero ${\frak g}^{(0,-(n_M-1))}$-module.

For $i, n\in\mathbb N$, $w\in M_0$, noticing $n_M\ge 0$ by (i), it follows from (\ref{rep2}) that
\begin{eqnarray*}
&&H_{i+n_M}\bar L_nw=\bar L_nH_{i+n_M}w+(i+n_M)H_{n+i+n_M}w=0,\\
&&F_{i+n_M\pm \frac12}\bar L_nw=\bar L_nF_{i+n_M\pm\frac12}w+(\frac{n}2+i+n_M\pm \frac12)F_{n+i+n_M\pm \frac12}w=0,
\end{eqnarray*}
where sign $``+"$ corresponds to $n_M=0$.
This implies that $\bar L_nw\in M_0$ for $n\in\mathbb N$. Similarly,  $\bar G_{n+\frac12}w\in M_0$ for $n\in\mathbb N$.  That is, $M_0$ is invariant under the action of the operators $\bar  L_n, \bar G_{n+\frac12}$ for $n\in\mathbb N$.
\end{proof}

\begin{prop}\label{prop-n=01} Let $M$ be a simple smooth $\frak g$-module  with level $\ell \not=0$  and central charge  $(c, z)$.
 If $n_M=0, 1$, then  $M\cong U^{\frak g}\otimes H(z)^{\frak g}$ as $\frak g$-modules for some  simple modules $U\in \mathcal{R}_{{\frak s}}$ and
 $H\in \mathcal{R}_{\mathfrak {hc}}$.
 \end{prop}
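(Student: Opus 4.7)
The plan is to apply Corollary \ref{tensor} directly, which reduces the whole proposition to producing a single simple $\mathfrak{hc}$-submodule of $M$ lying in $\mathcal{R}_{\mathfrak{hc}}$.

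First, I would invoke Lemma \ref{lemma-n}(ii) to pick a nonzero $w\in M_0$. Since $H_0$ is central in $\mathfrak{g}$ and $M$ is simple, $H_0$ must act on $M$ as a scalar $d\in\mathbb{C}$. Unpacking the defining annihilation conditions for $M_0$ in each of the two sub-cases $n_M=0$ and $n_M=1$ shows that, regardless of which case we are in, the vector $w$ satisfies $H_n w=0$ for all $n\geq 1$ and $F_r w=0$ for all $r\geq 1/2$, while $H_0 w=d w$ (with $d=0$ forced when $n_M=0$, since then $H_0$ already lies in the annihilator set in the definition of $M_0$).

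Next, I would turn $\mathbb{C} w$ into a one-dimensional $\mathfrak{hc}^{(0)}$-module via the character $\phi\colon\mathfrak{hc}^{(0)}\to\mathbb{C}$ defined by $\phi(H_0)=d$, $\phi({\bf c}_3)=\ell$, and $\phi=0$ on the remaining generators $H_{n}, F_{r}$ with $n\geq 1$, $r\geq 1/2$. The universal property of induction then provides a surjective $\mathfrak{hc}$-homomorphism from the Whittaker module $W_{\phi}={\rm Ind}_{\mathfrak{hc}^{(0)}}^{\mathfrak{hc}}\mathbb{C} w$ onto the $\mathfrak{hc}$-submodule $H:=U(\mathfrak{hc})w\subseteq M$. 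Because $\phi({\bf c}_3)=\ell\neq 0$, Lemma \ref{H-whittaker} tells us that $W_{\phi}$ is already simple, so this surjection must be an isomorphism and $H$ is simple as an $\mathfrak{hc}$-module. Moreover $H\in\mathcal{R}_{\mathfrak{hc}}$, because smoothness passes from $M$ viewed as a $\mathfrak{g}$-module to any $\mathfrak{hc}$-submodule.

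Finally, Corollary \ref{tensor} applied to the simple smooth $\mathfrak{g}$-module $M$ with its simple smooth $\mathfrak{hc}$-submodule $H$ yields $M\cong U^{\mathfrak{g}}\otimes H(z)^{\mathfrak{g}}$ for some simple $U\in\mathcal{R}_{\mathfrak{s}}$, which is exactly the desired conclusion. The whole argument is an assembly of prior results; the only mild subtlety is the case analysis $n_M\in\{0,1\}$ used to verify that $w$ simultaneously satisfies $H_+ w=0$ and $F_{\geq 1/2}\,w=0$ (for $n_M=1$ this uses $F_{1/2}=F_{1-1/2}$ so that $F_{1/2}$ indeed appears in the annihilator set defining $M_0$). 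I do not foresee any substantial obstacle beyond this bookkeeping.
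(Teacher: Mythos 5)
Your proposal is correct and follows essentially the same route as the paper: take a nonzero vector in $M_0$, note that it spans a one-dimensional $\mathfrak{hc}^{(0)}$-module, show $U(\mathfrak{hc})w$ is a simple smooth $\mathfrak{hc}$-submodule, and conclude by Corollary \ref{tensor}. The only difference is cosmetic: where the paper asserts simplicity of $H=U(\mathfrak{hc})w$ ``by some direct calculations,'' you obtain it by identifying $H$ with the simple Whittaker module $W_\phi$ (via Lemma \ref{H-whittaker} and the surjection from the induced module), which is a clean way to fill in that step.
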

\begin{proof}
Since  $n_M=0, 1$, we  take any nonzero $v\in M_0$. Then $\mathbb C v$ is an  ${\mathfrak {hc}}^{(0)}$-module (the action of $H_0$ is a scalar multiplication since it is in the center of $\frak g$). Let $H=U({\mathfrak {hc}})v$, the ${\mathfrak {hc}}$-submodule of $M$ generated by $v$. By some direct calculations we see that $H$ is a simple ${\mathfrak {hc}}$-module.   Then the desired assertion follows directly from  Corollary \ref{tensor}.
\end{proof}

Next we assume that $n_M\ge 2$.

We define the operators $L_n'=L_n-\bar L_n$ and $G_{n+\frac12}'=G_{n+\frac12}-\bar G_{n+\frac12}$ on $M$ for $n\in\mathbb Z$.  Since $M$ is a smooth $ {\frak g}$-module, then $L_n'$ is well-defined for any $n\in\mathbb Z$. 

Since 
\begin{equation}\label{LHG}\aligned
&[L_n,H_{k}]=[\bar L_n,H_{k}]=-kH_{n+k}+\delta_{n+k,0}(n^2+n){\bf c}_2,  \\
 &[G_{n+\frac12}, H_{k}]=[\bar G_{n+\frac12}, H_{k}]=-F_{n+k+\frac12}, \\  
 &[L_n, F_{r}]=[\bar L_n, F_{r}]=-(\frac n2+r)F_{n+r}, \\ 
 &[G_p, F_{r}]=[\bar G_p, F_r]=H_{p+q}+(2p+1)\delta_{p+q, 0}{\bf c}_2,\endaligned
\end{equation}
 we have
\begin{equation}\label{N=1super11}
\aligned
& [\bar L_m, L_n]=[\bar L_m, \bar L_n],\  [\bar L_m,G_p]=[\bar L_m, \bar G_p], \\ 
& [ L_m,\bar G_p]=[\bar L_m, \bar G_p],\  [\bar G_p, G_q]=[\bar G_p, \bar G_q].\endaligned
\end{equation}
By \eqref{rep2} and \eqref{N=1super11}, we have
\begin{eqnarray}\label{vir-bracket'}
&&[L_m',L_n']=(m-n)L_{m+n}'+\frac{m^3-m}{12}\delta_{m+n,0}{\bf c}'_1,\nonumber\\
&&[G_p', G_q']=2L_{p+q}'+\frac13\left(p^2-\frac14\right)\delta_{p+q, 0}{\bf c}_1',\nonumber\\
&&[L_m', G_p']=\left(\frac m2-p\right)G_{m+p}', \ [L_m',{\bf c}'_1]=0, \ \forall m, n\in\mathbb Z, \ p, q\in\mathbb Z+\frac12,
\end{eqnarray}
where  ${\bf c}'_1={\bf c}_1-(\frac32-\frac{12z^2}{\ell})\text{id}_M$. So the algebra
$${\frak s}'=\bigoplus_{n\in\mathbb Z}\left(\mathbb C L_n'+\mathbb C G_{n+\frac12}'\right)\oplus \mathbb C{{\bf c}}'_1$$
 is isomorphic to the super Virasoro algebra ${\frak s}$.  By \eqref{LHG}, we have
 \begin{equation}\label{d'bracket}
 [L'_n, H_{k}]=[G_{n+\frac12}', H_k]=0\nonumber\end{equation} and
  \begin{equation}\label{d'bracket}
 [L'_n, F_{k+\frac12}]=[G_{n+\frac12}', F_{k+\frac12}]=0, \forall n, k\in\mathbb Z\nonumber\end{equation}
  and hence  $[{\frak s}',{\mathfrak {hc}}+\mathbb C {\bf c}_2]=0$. 
  So the algebra
 \begin{equation}\frak g'={\frak s}'\oplus  ({\mathfrak {hc}}+\mathbb C {\bf c}_2)\label{direct-sum}\end{equation} is a direct sum of two ideals, and $M=U(\frak g)v=U(\frak g')v$ for any $ v\in M\setminus\{0\}$. For any $n\in\mathbb Z$, let
$$Y_n=\bigcap_{p\ge n}{\rm Ann}_{M_0}({\rm span}_{\mathbb C}\{L_p', G_{p-\frac12\tau(p)}'\}),       r_M=\min\{n\in\mathbb Z:Y_n\ne0\}, K_0=Y_{r_M}, $$
where $\tau(p)=1$ if $p\ge 1$, $\tau(p)=-1$ if $p\le 0$.

Noting that $M$ is a smooth $\frak g$-module (also smooth $\frak g'$-module), we know that $r_M<+\infty$. If $Y_n\ne0$ for all $n\in\mathbb Z$, we define $r_M=-\infty$ (see Lemma \ref{lemma-r} (i) below).
Denote by $K=U({\mathfrak {hc}})K_0$.

\begin{lemm}\label{lemma-r}
Let $M$ be a simple smooth $\frak g$-module  with level $\ell \not=0$  and central charge  $(c, z)$. Then the following statements hold.
\begin{enumerate}
\item[\rm(i)] $ r_M\ge -1$ or $r_M=-\infty$.
\item[\rm(ii)] If $r_M\ge -1$, then $K_0$ is a $\frak g^{(0,-(n_M-1))}$-module  and $H_{n_M-1}$ acts injectively on $K_0$.
\item[\rm(iii)] $K$ is a $\frak g^{(0,-\infty)}$-module and $K(z)^{\frak g}$ has a $\frak g$-module structure by $(\ref{rep2})$.
\item[\rm{(iv)}] $K_0$ and $K$ are invariant under the actions of $L_n, G_{n+\frac12}$ and $L_n', G_{n+\frac12}'$ for $n\in\mathbb N$.
\item[\rm(v)] If $r_M\ge 2$, then $L'_{r_M-1}$ acts injectively on $K_0$ and $K$.
\end{enumerate}
\end{lemm}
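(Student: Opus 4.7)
For part (i), I plan to show that $Y_{-2}\neq 0$ already forces $r_M=-\infty$. Given $0\neq v\in Y_{-2}$, unpacking the definition of $\tau$ shows $v$ is killed by $L'_p$ for $p\geq -2$ and by $G'_r$ for $r\geq -3/2$; in particular $L'_{-1}$, $L'_{-2}$, $G'_{-1/2}$, $G'_{-3/2}$ all annihilate $v$. Iterated application of $[L'_{-1},L'_{-n}]=(n-1)L'_{-n-1}$ and $[L'_{-1},G'_{-r}]=(r-1/2)G'_{-r-1}$ generates the entire negative part of $\frak s'$ from these four elements, and because no central term $\frac{m^3-m}{12}\delta_{m+n,0}{\bf c}'_1$ in $[L'_m,L'_n]$ can appear within strictly negative modes, an induction shows $v$ is annihilated by all of $\frak s'$. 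Consequently $v\in Y_n$ for every $n\in\mathbb Z$, so $r_M=-\infty$.

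For parts (ii)--(iv), the key is the identity $[\frak s',\frak{hc}+\mathbb C{\bf c}_2]=0$ from \eqref{direct-sum}. This immediately gives $\frak{hc}^{(-(n_M-1))}K_0\subseteq K_0$, and shows that the Sugawara-type operators $\bar L_i, \bar G_{i+1/2}$ for $i\geq 0$ preserve $K_0$ (they commute with every $L'_p, G'_r$, and a direct calculation from \eqref{LHG} shows they preserve $M_0$). The remaining generators $L'_i, G'_{i+1/2}$ of $\frak s^{(0)}$ preserve $K_0$ by a super-Virasoro bracket computation: for $w\in K_0$, $p\geq r_M$, $i\geq 0$, the identity $L'_pL'_iw=(p-i)L'_{p+i}w+\frac{p^3-p}{12}\delta_{p+i,0}{\bf c}'_1w$ vanishes because $p+i\geq r_M$, and because the only admissible pairs with $p+i=0$ occur at $(p,i)=(0,0)$ or $(-1,1)$, where $p^3-p=0$; the analogous $G'$-brackets work identically. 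Combining this with Lemma \ref{lemma-n}(i), which supplies the injectivity of $H_{n_M-1}$ on $K_0\subseteq M_0$, proves (ii), and (iv) is then automatic. For (iii), $K=U(\frak{hc})K_0$ is $\frak{hc}$-stable and smooth by construction, while $L_iK\subseteq K$ and $G_{i+1/2}K\subseteq K$ for $i\geq 0$ follow by writing $L_i=L'_i+\bar L_i$: the $L'_i$ part commutes past all $\frak{hc}$-factors, and $\bar L_i$ lies in the completed enveloping algebra of $\frak{hc}$; formula \eqref{rep2} then endows $K$ with a $\frak g$-module structure $K(z)^{\frak g}$.

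For (v), the main point is: if $r_M\geq 2$ and $L'_{r_M-1}w=0$ for some $w\in K_0$, then $u:=G'_{r_M-3/2}w$ already lies in $Y_{r_M-1}$, whence the minimality of $r_M$ forces $u=0$ and $w=0$. Indeed, since $G'$ commutes with $\frak{hc}$, we get $u\in M_0$; a direct super-bracket computation gives $L'_pu=0$ for $p\geq r_M$ and $G'_ru=0$ for $r\geq r_M-1/2$, placing $u$ in $Y_{r_M}$. Moreover, $L'_{r_M-1}u=(1-r_M/2)G'_{2r_M-5/2}w=0$ since $2r_M-5/2\geq r_M-1/2$ when $r_M\geq 2$, and $G'_{r_M-3/2}u=(G'_{r_M-3/2})^2w=L'_{2r_M-3}w$, which vanishes because $2r_M-3$ either equals $r_M-1$ (when $r_M=2$, matching the hypothesis) or exceeds or equals $r_M$ (when $r_M\geq 3$, so $w\in Y_{r_M}$ kills it). Hence $u\in Y_{r_M-1}=0$, i.e., $G'_{r_M-3/2}w=0$; combined with $L'_{r_M-1}w=0$ this puts $w$ into $Y_{r_M-1}=0$. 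Injectivity on $K$ then transfers from $K_0$ via a PBW decomposition $K\cong U(\frak{hc}^-)\otimes K_0$ (valid since ${\bf c}_3$ acts as a nonzero scalar and the sufficiently positive $\frak{hc}$-modes annihilate $K_0$): because $[L'_{r_M-1},\frak{hc}]=0$, it acts as $\mathrm{id}\otimes L'_{r_M-1}|_{K_0}$, so injectivity on $K_0$ yields injectivity on $K$. The main obstacle I anticipate is the uniform bookkeeping in (v), in particular handling $r_M=2$ separately from $r_M\geq 3$ and making the PBW decomposition of $K$ over $K_0$ rigid enough to transfer injectivity cleanly.
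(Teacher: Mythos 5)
Your proposal is correct and follows essentially the same route as the paper: the same bracket computations with $L'_n,G'_r$, the identity $[\frak s',\frak{hc}+\mathbb C{\bf c}_2]=0$, and Lemma \ref{lemma-n} drive parts (i)--(iv), and part (v) rests on the same element $u=G'_{r_M-\frac32}w$. If anything, your (v) is slightly more explicit than the paper's: you check all the modes needed to place $u$, and then $w$, in $Y_{r_M-1}$ directly, and you justify the passage of injectivity from $K_0$ to $K$ via freeness of $K$ over the lowering part of $U(\frak{hc})$ (a fact the paper only establishes within the proof of Proposition \ref{prop-r2-n2}), which is a sound refinement rather than a different method.
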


\begin{proof} (i) If $Y_{-2}\ne 0$, then $L'_{p}K_0=0, p\ge -2$. We deduce that ${\frak s}' K_0=0$ and hence $r_M=-\infty$.
If $Y_{-2}=0$, then $r_M\ge -1$.

(ii) For any $0\ne v\in K_0$ and $x\in {\frak g^{(0,-(n_M-1))}}$, it follows from Lemma \ref{lemma-n} (ii) that $xv\in M_0$. We first show that $L'_pxv=0, p\ge r_M$. Indeed, $L_p'H_{k}v=H_{k}L_p'v=0$ and $L_p'F_{k+\frac12}v=F_{k+\frac12}L_p'v=0$ by (\ref{d'bracket}) for any $k\geq -(n_M-1)$.
Moreover, it follows from (\ref{N=1super11}) and  (\ref{vir-bracket'}) that
$$L_p'L_nv=L_nL_p'v+[L_p', L_n]v=(n-p)L_{p+n}'v=0, \forall n\in\mathbb N,$$
$$L_p'G_{n+\frac12}v=G_{n+\frac12}L_p'v+[L_p', G_{n-\frac12}]v=(\frac{p}2-n-\frac12)G_{p+n+\frac12}'v=0, \forall n\in\mathbb N.$$
Hence, $L'_pxv=0, p\ge r_M$. Similarly  $G'_{p-\frac12\tau(r_M)}xv=0, p\ge r_M$.
That is, $xv\in K_0$, as desired.

Since $0\ne K_0  \subseteq M_0$, we see that  $H_{n_M-1}$ acts injectively on $K_0$ by Lemma \ref{lemma-n} (i).

(iii) follows from (ii).

(iv) The statement that  $K_0$ is invariant under the actions of $L_n, G_{n+\frac12}$ for $n\in\mathbb N$ follows from the definition of $K_0$ and the computations:
$$\aligned L_p'L_nK_0= G_{p-\frac12\tau(p)}'L_nK_0=0, \,\,\,L_p'G_{n+\frac12}K_0=G_{p-\frac12\tau(p)}'G_{n+\frac12}K_0=0, \forall n\ge 0, p\ge r_M, \\
 H_{k+n_M}L_nK_0= F_{k+n_M\pm1/2}L_nK_0=0, \,\,\,H_{k+n_M}G_{n+\frac12}K_0=F_{k+n_M\pm1/2}G_{n+\frac12}K_0=0, \forall n\ge 0, p\ge r_M. 
\endaligned$$
Similarly,  $K_0$ is invariant under the actions of   $L_n', G_{n+\frac12}'$ for $n\in\mathbb N$.  

Using these just established results on $K_0$, the definition of  $K$ and $K_0$, and the fact that $[\frak{s}', \frak{hc}]=0$, we can   verify that     $L'_nK\subset K, G_{n+\frac12}'K\subset K$. The statement that $K$ is invariant under the actions of $L_n, G_{n+\frac12}$ follows from
the fact that $[\frak{s}, \frak{hc}]=\frak{hc}$.

(v) If there exists $v\in K_0$ such that $L'_{r_M-1}v=0$,  then $u=G'_{r_M-\frac32}v\ne0$ by the definition of $r_M$.  However $G'_{r_M-\frac32}u=2L'_{2r_M-3}v=0$ since $r_M\ge 2$. We see that $L'_{r_M-1}u=\frac12G'_{r_M+\frac12}G'_{r_M-\frac32}u= 0$ which  contradicts the definition of $r_M$ (note that $u\in M_0$). Then $L'_{r_M-1}$ acts injectively on $K_0$, and then on $K$ since $[\frak s', \frak{hc}]=0$.
%Using the established result above and the fact that $K=\text{Ind}_{\frak g^{(0,-(n_M-1))}}^{\frak g^{(0,-\infty)}}K_0$
%we can prove that  $L'_{r_M-1}$ acts injectively on   $K$. 
% If $r_M=1$, then it is that $L_0$ is injective on $K_0$. If $r_M=0$ and $L_{-1}v=0$ for some $v\in K_0$, then we can also find $u=G_{-\frac12}v$ such that $L_{-1}u=G_{-\frac12}u=0$.  It also get a contradiction. If $r_M=-1$, then it is that $L_{-2}$ is injective on $K_0$. The injection of $L'_{r_M-1}$  on $K$ is clear.
\end{proof}

\begin{prop}\label{prop-r-infty}
Let $M$ be a
simple smooth $\frak g$-module with   level $\ell \not=0$ and  central charge  $(c, z)$. If $r_M=-\infty$, then $M= K(z)^{\frak g}$. Hence $c=\frac32-\frac{12z^2}{\ell}$
 and $K$ is a simple ${\mathfrak {hc}}$-module.
\end{prop}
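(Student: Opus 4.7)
The plan is to unwind the hypothesis $r_M=-\infty$ into a concrete annihilation statement on $K_0$, propagate it to $K$ using the direct-sum decomposition in \eqref{direct-sum}, show $K$ is $\frak g$-stable, and then identify $M$ with $K(z)^{\frak g}$ by matching the $\frak g$-action against \eqref{rep2}.

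First, I would translate the condition $r_M=-\infty$ into explicit vanishing. By the proof of Lemma \ref{lemma-r}(i) this means $Y_{-2}\ne 0$, hence $L'_p K_0 = 0$ for all $p\ge -2$ and $G'_{p-\frac12\tau(p)}K_0=0$ for $p\ge -2$. Since $\{p-\frac12\tau(p):p\ge -2\}$ covers every half-integer $\ge -\frac32$, the bracket relations \eqref{vir-bracket'} for $\frak s'$, applied inductively (e.g.\ $L'_{-k-1}=\frac{1}{-k+1}[L'_{-2},L'_{-k+1}]$ once $k$ is large enough, and using $[G',G']\sim L'$ to generate the remaining negative modes), force $\frak s' K_0=0$. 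This was already observed in the proof of Lemma \ref{lemma-r}(i). Because $[\frak s',\mathfrak{hc}+\mathbb C\mathbf c_2]=0$, the subspace $K=U(\mathfrak{hc})K_0$ is also annihilated by $\frak s'$. In particular $\mathbf c'_1 K=0$, so $\mathbf c_1$ acts on $K$ as $\frac32-\frac{12z^2}{\ell}$.

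Next, I would show $K$ is a $\frak g$-submodule of $M$. Using $L_n=L'_n+\bar L_n$ and $G_{n+\frac12}=G'_{n+\frac12}+\bar G_{n+\frac12}$, together with $L'_nK=G'_{n+\frac12}K=0$, the action of $L_n$ and $G_{n+\frac12}$ on $K$ reduces to that of $\bar L_n$ and $\bar G_{n+\frac12}$. By formula \eqref{rep2}, these operators are expressed entirely via $H_k$, $F_r$, $\mathbf c_2$, and $\ell^{-1}$; smoothness of $M$ guarantees that for each $v\in K$ only finitely many normal-ordered summands act nontrivially, and each surviving summand lies in $U(\mathfrak{hc})$. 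Hence $\bar L_n K\subseteq K$ and $\bar G_{n+\frac12}K\subseteq K$, so $K$ is $\frak g$-stable. Since $M$ is simple and $K_0\subseteq K$ is nonzero, we conclude $K=M$.

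Finally, I would identify $M\cong K(z)^{\frak g}$ and deduce the simplicity of $K$ as an $\mathfrak{hc}$-module. On $M=K$ the $\frak g$-action satisfies $L_n=\bar L_n$, $G_{n+\frac12}=\bar G_{n+\frac12}$, $H_k=H_k$, $F_r=F_r$, $\mathbf c_2=z$, $\mathbf c_3=\ell$, and $\mathbf c_1=\frac32-\frac{12z^2}{\ell}$, which is precisely the definition of $K(z)^{\frak g}$ in \eqref{rep2}; this also forces $c=\frac32-\frac{12z^2}{\ell}$. For simplicity of $K$ over $\mathfrak{hc}$, any nonzero $\mathfrak{hc}$-submodule $N\subseteq K$ is automatically stable under $\bar L_n$ and $\bar G_{n+\frac12}$ (they are built from $\mathfrak{hc}$), hence under all of $\frak g$, and therefore coincides with $M=K$ by $\frak g$-simplicity.

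The main technical obstacle is the verification that the normally-ordered infinite sums defining $\bar L_n$ and $\bar G_{n+\frac12}$ actually map the $U(\mathfrak{hc})$-submodule $K$ into itself; once one observes that smoothness of $M$ truncates these sums to finite expressions in $U(\mathfrak{hc})$, every other step is a bookkeeping exercise using \eqref{rep2}, \eqref{N=1super11}, and the decomposition \eqref{direct-sum}.
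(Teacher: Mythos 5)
Your proposal is correct and follows essentially the same route as the paper: $r_M=-\infty$ forces $\frak s' K_0=0$, hence $\frak s'K=0$ since $[\frak s',\mathfrak{hc}+\mathbb C{\bf c}_2]=0$, which gives the central charge identity, makes $K$ a $\frak g$-submodule on which $L_n,G_{n+\frac12}$ act as $\bar L_n,\bar G_{n+\frac12}$ from \eqref{rep2}, and simplicity of $M$ then yields $M=K(z)^{\frak g}$ and the $\mathfrak{hc}$-simplicity of $K$. (The only blemish is the numerical coefficient in your parenthetical bracket $[L'_{-2},L'_{-k+1}]=(k-3)L'_{-k-1}$, which does not affect the argument.)
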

\begin{proof}
Since $r_M=-\infty$, we see that ${\frak s}' K_0=0$. This together with (\ref{vir-bracket'}) implies that $c_1=\frac32-\frac{12z^2}{\ell}$.  Noting that $[{\frak s}',  {\mathfrak {hc}}+\mathbb C{ {\bf c}}_2]=0$, we further obtain that ${\frak s}' K=0$, that is,  $L_nv=\bar L_nv, G_{n+\frac12}v=\bar G_{n+\frac12}v\in K$ for any $v\in K$ and $n\in\mathbb Z$. Hence $K(z)^{\frak g}$ is a $\frak g$-submodule of $M$, yielding that $M= K(z)^{\frak g}$. In particular, $K$ is a simple ${\mathfrak {hc}}$-module.
\end{proof}

\begin{prop}\label{prop-r2-n2}
Let $M$ be a simple smooth $\frak g$-module  with level $\ell \not=0$  and central charge  $(c, z)$. If $r_M\ge 2$ and $n_M\ge 2$, then
$K_0$ is a simple $\frak g^{(0,-(n_M-1))}$-module and
$M\cong {\rm Ind}_{\frak g^{(0,-(n_M-1))}}^{\frak g}K_0$.
\end{prop}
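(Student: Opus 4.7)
The strategy is to prove both assertions simultaneously by establishing that the canonical $\frak g$-module homomorphism
\[
\varphi \colon \mathrm{Ind}_{\frak g^{(0,-(n_M-1))}}^{\frak g} K_0 \longrightarrow M, \qquad u\otimes v \mapsto uv
\]
is an isomorphism. Surjectivity is immediate: since $M$ is simple and $K_0\ne 0$, one has $U(\frak g)K_0=M$. Once injectivity is established, simplicity of $K_0$ follows by a standard argument: a proper nonzero $\frak g^{(0,-(n_M-1))}$-submodule $N\subsetneq K_0$ would induce, by PBW, a proper nonzero $\frak g$-submodule $\mathrm{Ind}\,N\hookrightarrow \mathrm{Ind}\,K_0\cong M$, contradicting the simplicity of $M$.

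For injectivity, assume $W=\ker\varphi$ is nonzero and pick $w\in W\setminus\{0\}$ of minimal principal degree, writing $w=\sum L^{\mi}H^{\mj}G^{\mk}F^{\ml}v_{(\mi,\mj,\mk,\ml)}$ in PBW form with indices restricted so that the PBW factors lie outside $\frak g^{(0,-(n_M-1))}$. Let $(\mi^0,\mj^0,\mk^0,\ml^0)=\deg(w)$ and $v_0=v_{(\mi^0,\mj^0,\mk^0,\ml^0)}\in K_0\setminus\{0\}$. The plan is to construct $Y\in U(\frak g^+)$ with $Yw\in W$ nonzero and of strictly smaller principal degree, contradicting minimality. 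Because the principal order prioritizes the $\frak s$-part $(\mi,\mk)$ over the $\frak{hc}$-part $(\mj,\ml)$, the reduction proceeds in two layers.

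In the first layer I eliminate $(\mi^0,\mk^0)$. Applying positive $L_n, G_{n+\frac12}$ with suitably chosen indices and commuting them past the PBW monomial via Lemma \ref{homo}, one successively reduces the $\frak s$-exponents of the leading monomial. The essential step is to show that the residual positive $\frak s$-operator acting on $v_0$ produces a nonzero element of $K_0$; this is where the injectivity of $L'_{r_M-1}$ on $K_0$ (Lemma \ref{lemma-r}(v)) enters, combined with the defining annihilation properties $L'_p K_0=G'_{p-\frac12\tau(p)}K_0=0$ for $p\ge r_M$, which convert actions of $L_p, G_{p-\frac12\tau(p)}$ on $K_0$ into actions of $\bar L_p, \bar G_{p-\frac12\tau(p)}$ expressible via the Heisenberg-Clifford generators. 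Iterating forces $(\mi^0,\mk^0)=\mathbf 0$. In the second layer, with $(\mi^0,\mk^0)=\mathbf 0$, I peel off the factors $H_{-k}$ and $F_{-k+\frac12}$ using positive $H_n$ and $F_{n+\frac12}$ with $n\ge n_M$; the injectivity of $H_{n_M-1}$ on $K_0$ (Lemma \ref{lemma-r}(ii)), together with $\ell\ne 0$ and the simple commutation structure of $\frak{hc}$, preserves a nonzero leading coefficient at each reduction step. Hence $(\mj^0,\ml^0)=\mathbf 0$, so $w=v_0\in K_0$ and $\varphi(w)=v_0\ne 0$, a contradiction.

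The main obstacle is the bookkeeping in the first layer: each commutator $[L_n,L_{-k}]$ or $[G_p,G_q]=2L_{p+q}+\cdots$ generates a cascade of lower-degree terms, and one must verify that the contributions to the leading $K_0$-coefficient do not accidentally cancel. The decoupling afforded by the ideal decomposition $\frak g'=\frak s'\oplus(\frak{hc}+\mathbb C{\bf c}_2)$ of \eqref{direct-sum}, together with the annihilation of $L'_p$ and $G'_{p-\frac12\tau(p)}$ on $K_0$ for $p\ge r_M$, is what makes this tractable.
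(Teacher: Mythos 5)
Your overall frame (show the canonical map $\varphi\colon \mathrm{Ind}_{\frak g^{(0,-(n_M-1))}}^{\frak g}K_0\to M$ is bijective, then deduce simplicity of $K_0$ from simplicity of $M$) matches the paper, and your surjectivity and simplicity remarks are fine. The gap is in your ``first layer.'' You claim that applying positive $L_n$, $G_{n+\frac12}$ and commuting past the PBW monomial ``successively reduces the $\frak s$-exponents of the leading monomial.'' This fails: unlike $H_i$ ($i\ge n_M$) and the relevant $F_r$, the original generators $L_n, G_{n+\frac12}$ do \emph{not} annihilate $K_0$. On $K_0$ one has $L_n=\bar L_n+L'_n$ with $L'_nK_0=0$ only for $n\ge r_M$, so $L_nv_0=\bar L_nv_0$, a generally nonzero element of $K_0$ built from Heisenberg--Clifford quadratics. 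Hence in $L_n\bigl(L^{\bmi}H^{\bmj}G^{\bmk}F^{\bml}v_0\bigr)$ the pass-through term $L^{\bmi}H^{\bmj}G^{\bmk}F^{\bml}(\bar L_nv_0)$ survives with the \emph{same} principal degree as the leading term of $w$; you obtain an element of degree $\preceq\deg(w)$, not strictly smaller, so minimality gives no contradiction and the induction on the $\frak s$-part never starts. Trying to use $L'_n, G'_r$ directly as raising operators does not repair this within your PBW coordinates either, since $[L'_n,L_{-k}]=(n+k)L'_{n-k}+\mathrm{central}$ reintroduces primed modes, i.e.\ $L_{n-k}$ minus infinite $\mathfrak{hc}$-quadratic tails, destroying the monomial bookkeeping.

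This is exactly the point the paper's proof is designed around, and it is a structural step, not bookkeeping: the induction is factored as $\mathrm{Ind}_{\frak g^{(0,-(n_M-1))}}^{\frak g}K_0\cong \mathrm{Ind}_{\frak g^{(0,-\infty)}}^{\frak g}K$ with $K=U(\mathfrak{hc})K_0$, where the first ($\mathfrak{hc}$-only) step is handled by a degree argument using $H_{p+n_M-1}$ and the $F$'s, which genuinely annihilate $K_0$ and where $\ell\ne0$ gives a nonzero leading coefficient; then the remaining induction is re-coordinatized as $\mathrm{Ind}_{\frak s'^{(0)}}^{\frak s'}K$ in terms of the primed generators, which commute with $\mathfrak{hc}$, satisfy $L'_pK=G'_{p-\frac12\tau(p)}K=0$ for $p\ge r_M$, and have $L'_{r_M-1}$ injective on $K$; at that point the super-Virasoro result (proof of Theorem 3.1 in \cite{LPX1}) yields that every nonzero $\frak s'$-submodule of $\mathrm{Ind}_{\frak s'^{(0)}}^{\frak s'}K$ meets $K$, and since $\pi|_K$ is injective the kernel must vanish. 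Your sketch names these ingredients ($L'$, $\bar L$, Lemma \ref{lemma-r}(v)) but omits the change of PBW generators and the intermediate isomorphisms that make them applicable; without that, the two-layer reduction with the original generators, and in the order you propose ($\frak s$-part first), does not go through.
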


\begin{proof}We first show that  ${\rm Ind}_{\frak g^{(0,-(n_M-1))}}^{\frak g^{(0,-\infty)}}K_0 \cong K$ as  $\frak g^{(0,-\infty)}$ modules. For that, let
\begin{eqnarray*}
\phi:\, {\rm Ind}_{\frak g^{(0,-(n_M-1))}}^{\frak g^{(0,-\infty)}}K_0  &\longrightarrow &K\\
\sum_{\mj\in\mathbb{M}, \ml\in\mathbb M_1} H^{\mj}F^{\ml}\otimes v_{\mj, \ml}&\mapsto &
\sum_{\mj\in\mathbb{M}, \ml\in\mathbb M_1} H^{\mj}F^{\ml} v_{\mj, \ml},
\end{eqnarray*} where $ v_{\mj, \ml}\in K_0\setminus\{0\}$, $H^{\mj}F^{\ml}=\cdots H^{j_2}_{-2-(n_M-1)}H_{-1-(n_M-1)}^{j_1}\cdots F^{l_2}_{-\frac32-(n_M-1)}F^{l_1}_{-\frac12-(n_M-1)}\in U({\mathfrak {hc}})$. Then $\phi$ is a  $\frak g^{(0,-\infty)}$-module epimorphism and $\phi|_{K_0}$ is one-to-one.

{\bf Claim}. Any nonzero submodule $V$ of ${\rm Ind}_{\frak g^{(0,-(n_M-1))}}^{\frak g^{(0,-\infty)}}K_0$ does not  intersect with $K_0$ trivially.

Assume $V\cap K_0=0$. Let $v=\sum_{\mj\in\mathbb{M}, \ml\in\mathbb M_1} H^{\mj}F^{\ml}\otimes v_{\mj, \ml}\in V\backslash K_0 $ with  minimal deg$(v):=(\bmj, \bml)$. Then $(\bf 0, \bf0)\prec (\bmj, \bml)$.

Let $p=\text{min}\{s:i_s\ne 0\}$. Since  $H_{p+n_M-1}v_{\mj, \ml}=0$, we have $H_{p+n_M-1}H^{\mj}F^{\ml}\otimes v_{\mj, \ml}=[H_{p+n_M-1},H^{\mj}]F^{\ml}v_{\mj, \ml}$.
If  $j_p=0$ then $H_{p+n_M-1}H^{\mj}F^{\ml}\otimes v_{\mj, \ml}=0$, and if $j_p\ne 0$, noticing
the level $\ell\ne 0$,
then $[H_{p+n_M-1},h^{\mj}F^{\ml}]=\lambda H^{\mj-\epsilon_p}F^{\ml}$ for
some  $\lambda\in\mathbb C^*$ and hence $$\text{deg}([H_{p+n_M-1},H^{\mj}]F^{\ml}v_{\mj, \ml})=(\mj-\epsilon_p, \ml)\preceq (\bmj-\epsilon_p, \bml),$$
where the equality holds if and only if $\bmj=\mj$. Hence $\deg(H_{p+n_M-1}v)=(\bmj-\epsilon_p, \bml)\prec (\bmj, \bml)$ and $H_{p+n_M-1}v\in V$,   contrary to the choice of $v$. 

Similarly,  set $p=\text{min}\{s:j_s\ne 0\}$, we can also get a contradiction.  So the claim holds.

By the claim, as $\frak g$-modules, we have
$${\rm Ind}_{\frak g^{(0,-(n_M-1))}}^{\frak g}K_0\cong{\rm Ind}_{\frak g^{(0,-\infty)}}^{\frak g}({\rm Ind}_{\frak g^{(0,-(n_M-1))}}^{\frak g^{(0,-\infty)}}K_0)\cong{\rm Ind}_{\frak g^{(0,-\infty)}}^{\frak g}K.$$

By the way, it is clear that ${\rm Ind}_{\frak g^{(0,-\infty)}}^{\frak g}K\cong {\rm Ind}_{{\frak s}'^{(0)}}^{{\frak s}'}K$ as vector spaces.
Moreover, we have the following $\frak g$-module epimorphism
\begin{eqnarray*}
\pi: {\rm Ind}_{\frak g^{(0,-\infty)}}^{\frak g}K={\rm Ind}_{{\frak s}'^{(0)}}^{{\frak s}'}K&\rightarrow& M,\cr
\sum_{\mi\in\mathbb{M},\mk\in\mathbb M_1}L'^{\mi}G'^{\mk}\otimes v_{\mi, \mk}&\mapsto& \sum_{\ml\in\mathbb{M}, \mk\in\mathbb M_1}L'^{\mi}G'^{\mk}v_{\mi, \mk},
\end{eqnarray*}
where $L'^{\mi}=\cdots (L'_{-2})^{i_2}(L'_{-1})^{i_1}$ and $G'^{\mk}=\cdots(G'_{-\frac32})^{k_2}(G_{-\frac12}')^{k_1}$.
We see that $\pi$ is also an ${\frak s}'$-module epimorphism. 

By Lemma \ref{lemma-r} (v), we see that $L_{r_M-1}'$ act injectively on  $K$.
By the proof of Theorem 3.1 in \cite{LPX1} we know that any nonzero ${\frak s}'$-submodule of ${\rm Ind}_{{\frak s}'^{(0)}}^{{\frak s}'}K$ contain nonzero vectors of $K$. Note that $\pi|_K$ is one-to-one, we see  that the image of any nonzero $\frak g$-submodule (and hence ${\frak s}'$-submodule) of ${\rm Ind}_{\frak g^{(0,-\infty)}}^{\frak g}K$  must be a nonzero $\frak g$-submodule of $M$ and hence be the  whole module $M$, which forces that the kernel of $\pi$ must be  $ 0$. Therefore, $\pi$ is an isomorphism. Since $M$ is simple, we see  $K_0$ is a simple $\frak g^{(0,-(n_M-1))}$-module.\end{proof}

\begin{lemm}\label{lemma-eigenvalue} Let $M$ be a simple smooth $\frak g$-module  with level $\ell \not=0$.   If  $r_M=1$, then $L_0'$ has an  eigenvector in $K$.
\end{lemm}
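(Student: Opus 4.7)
The strategy is to locate an eigenvector of $L'_0$ inside a simple $\mathfrak{hc}$-submodule of $K$, exploiting that $L'_0$ commutes with the entire Heisenberg--Clifford subalgebra. The commutativity $[L'_0,\mathfrak{hc}]=0$ is immediate from the identities in \eqref{LHG}: since $[L_n,H_k]=[\bar L_n,H_k]$ and $[L_n,F_r]=[\bar L_n,F_r]$, subtracting gives $[L'_n,H_k]=[L'_n,F_r]=0$ for every $n$, so in particular $L'_0$ preserves every $\mathfrak{hc}$-submodule of $K$.

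Granting that $K$ contains a simple $\mathfrak{hc}$-submodule $W$, the proof concludes immediately: $L'_0|_W$ is an $\mathfrak{hc}$-module endomorphism of $W$, and $W$ has countable dimension (being cyclic over the countable-dimensional algebra $U(\mathfrak{hc})$), so Schur's lemma forces $L'_0|_W=\lambda\,\mathrm{id}_W$ for some $\lambda\in\mathbb{C}$. Any nonzero vector of $W$ is then the desired $L'_0$-eigenvector in $K$.

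The remaining task is to produce the simple $\mathfrak{hc}$-submodule $W$. Starting from a nonzero $v_0\in K_0$ and working inside the cyclic smooth $\mathfrak{hc}$-submodule $U(\mathfrak{hc})v_0\subseteq K$, I would adapt the constructions in the proof of Theorem~\ref{smooth-hc} (in particular Lemmas~3.8 and 3.9 of \cite{XZ}, which exploit the nonzero Heisenberg relation $[H_i,H_{-i}]=i\ell$ and the Clifford analogue for the $F$-modes) to produce a vacuum vector $w$ annihilated by $H_n$ for all $n\ge 1$ and by $F_{n-1/2}$ for all $n\ge 1$. The cyclic submodule $W=U(\mathfrak{hc})w$ is then a Fock-type $\mathfrak{hc}$-module, which is simple at nonzero level (compare Lemma~\ref{H-whittaker}).

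The main obstacle is this vacuum construction, which in the case $n_M\ge 2$ (the cases $n_M=0,1$ are already settled by Proposition~\ref{prop-n=01}) requires an iterative lowering of the annihilation threshold from $n_M$ down to $1$ in the Heisenberg sector and from $n_M-\tfrac12$ down to $\tfrac12$ in the Clifford sector. Each step must be executed carefully because lowering typically moves the working vector out of $K_0$ into the larger space $K$; however, the nonzero level guarantees at each stage the existence of a new vector on which the current creation mode acts trivially, and smoothness of $K$ as an $\mathfrak{hc}$-module ensures termination of the procedure.
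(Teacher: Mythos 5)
The decisive gap is the vacuum construction at the end of your argument: you assume that $K$ contains a vector $w$ with $H_nw=F_{n-\frac12}w=0$ for all $n\ge1$, and that the nonzero level lets you lower the annihilation threshold step by step from $n_M$ down to $1$. This is false. Smoothness together with $\ell\ne0$ only yields annihilation by a sufficiently positive tail of $\mathfrak{hc}$ --- that is exactly what the invariant $n_M$ records --- and in the regime where this lemma is actually needed ($n_M\ge2$, the cases $n_M=0,1$ being settled already by Proposition \ref{prop-n=01}), the operator $H_{n_M-1}$ acts injectively on $K_0$ by Lemmas \ref{lemma-n} and \ref{lemma-r}. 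Concretely: let $B$ be a simple $\mathfrak{hc}_{[1]}$-module of level $\ell\ne0$ on which $H_1$ acts injectively (for instance the tensor product of the Weyl-algebra module $t^{\lambda}\mathbb C[t,t^{-1}]$, $\lambda\notin\mathbb Z$, with $H_{-1}=t$, $H_1=\ell\,\frac{d}{dt}$, and the two-dimensional Clifford module for $F_{\pm\frac12}$), set $H={\rm Ind}_{\mathfrak{hc}^{(-1)}}^{\mathfrak{hc}}B$, and let $U$ be a simple highest weight $\frak s$-module whose $L_0$-spectrum avoids $0$. Since the creation operators $H_{-j},F_{-j+\frac12}$ ($j\ge2$) commute with $H_1$, the mode $H_1$ acts injectively on $H$ and hence on $M=U^{\frak g}\otimes H(z)^{\frak g}$; this $M$ is simple, smooth, of nonzero level, with $r_M=1$ and $n_M=2$, yet neither $M$ nor $K$ contains any vector killed by $H_1$, let alone a full vacuum vector. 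More generally, Theorem \ref{smooth-hc} shows that simple smooth $\mathfrak{hc}$-modules of nonzero level are induced from $\mathfrak{hc}_{[m]}$-modules and need not be of Fock (highest weight) type, so the claim that ``the nonzero level guarantees at each stage the existence of a new vector on which the current creation mode acts trivially'' cannot be repaired.

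There is also a secondary flaw: even granted a simple $\mathfrak{hc}$-submodule $W=U(\mathfrak{hc})w\subseteq K$, the relation $[L'_0,\mathfrak{hc}]=0$ gives only $L'_0\,U(\mathfrak{hc})w=U(\mathfrak{hc})\,L'_0w$; it does not put $L'_0w$ back into $W$, so $L'_0|_W$ need not be an endomorphism of $W$ and Schur's lemma does not apply (in the tensor model $L'_0(u\otimes H)=(L_0u)\otimes H$, which lies in $u\otimes H$ only when $u$ is an $L_0$-eigenvector --- precisely what you are trying to produce). The paper avoids both problems with a torsion/free dichotomy over $\mathbb C[L'_0]$: if every finitely generated $\mathbb C[L'_0]$-submodule of $K$ were free, Lemma \ref{free-L0} upgrades the canonical map to an isomorphism $M\cong{\rm Ind}_{{\frak s}'^{(0)}}^{{\frak s}'}K$, so $K$ becomes a simple module over $\mathfrak{hc}\oplus\mathbb C{\bf c}_2\oplus\mathbb C L'_0$ with $L'_0$ central (this is where $r_M=1$ enters), whence Schur's lemma makes $L'_0$ a scalar, contradicting freeness; therefore some finitely generated $\mathbb C[L'_0]$-submodule has torsion, and the structure theorem over the PID $\mathbb C[L'_0]$ produces a monic polynomial $f$ of minimal degree with $f(L'_0)u=0$, whose last linear factor yields the desired eigenvector. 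Your proof would need to be replaced by an argument of this kind.
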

\begin{proof}Lemma \ref{lemma-r} (iv) means that $K$ is a $\frak g'^{(0,-\infty)}$-module.
Assume that any finitely generated $\mathbb C[L'_0]$-submodule of $K$ is a free $\mathbb C[L'_0]$-module. By Lemma \ref{free-L0} we see that
the following $\frak g'$-module homomorphism
\begin{eqnarray*}
\varphi:{\rm Ind}_{\frak g'^{(0,-\infty)}}^{{\frak g}'}K={\rm Ind}_{{\frak s}'^{(0)}}^{{\frak s}'}K&\longrightarrow & M, \cr
 x\otimes u&\mapsto &xu, x\in U({\frak s}'), u\in K.
\end{eqnarray*}
is an isomorphism. So
$M={\rm Ind}_{{\frak s}'^{(0)}}^{{\frak s}'}K$, and consequently, $K$ is a simple
$\frak g'^{(0,-\infty)}$-module. Since $r_M=1$ and ${\frak s}'^+K=0$,  $K$ can be seen as a simple module over the Lie superalgebra ${\mathfrak {hc}}\oplus \mathbb C{\bf c}_2\oplus\mathbb C L'_0$, where $\mathbb C L'_0$ lies in the center of the Lie superalgebra. Schur's lemma tells us that $L'_0$ acts as a scalar  on $K$,  a contradiction. So this case will not occur.

Therefore,   there exists some  finitely generated $\mathbb C[L'_0]$-submodule $W$ of $K$ that is not a free $\mathbb C[L'_0]$-module. Since $\mathbb C[L'_0]$ is a principal ideal domain,  by the structure theorem of finitely generated modules over a principal ideal domain,   there exists a monic polynomial $f(L'_0)\in\mathbb C[L'_0]$  with minimal positive degree  and nonzero element $u\in W$ such that $f(L'_0)u=0$. Write $f(L'_0)=\Pi_{1\le i\le s}(L_0'-\lambda_i)$, $\lambda_1,\cdots,\lambda_s\in\mathbb C$. Denote $w:=\prod_{i=1}^{s-1}(L_0'-\lambda_{i})u\neq 0$, we see $(L_0'-\lambda_s)w=0$  where we make convention that $w=u$ if $s=1$. Then $w$ is a desired eigenvector of $L_0'$.
\end{proof}

\begin{prop} \label{prop-tensor} Let $M$ be a simple smooth $\frak g$-module  with level $\ell \not=0$  and central charge  $(c, z)$.   If $r_M=0,\pm 1$, then $K$ is
 a simple ${\mathfrak {hc}}$-module  and
$M \cong  U^{\frak g}\otimes K(z)^{\frak g}$ for some simple module $U\in \mathcal{R}_{{\frak s}}$.
\end{prop}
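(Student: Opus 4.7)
The plan is to exhibit a simple smooth $\mathfrak{hc}$-submodule of $M$ lying inside $K$ and then invoke Corollary~\ref{tensor}. A direct inspection of the definition of $Y_{r_M}$ shows that in each of the three cases $r_M\in\{-1,0,1\}$ one has $({\frak s}')^{+}K_0=0$, and because $[{\frak s}',\mathfrak{hc}]=0$ and $K=U(\mathfrak{hc})K_0$ this upgrades to $({\frak s}')^{+}K=0$. To produce an $L_0'$-eigenvector $v\in K$, for $r_M\in\{-1,0\}$ the relation $L_0'K_0=0$ (also built into the definition of $Y_{r_M}$) lets me take any nonzero $v\in K_0$ with eigenvalue $0$, while for $r_M=1$ Lemma~\ref{lemma-eigenvalue} supplies such a $v$. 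In all cases I obtain $v\in K$ satisfying $L_0'v=\lambda v$ and $({\frak s}')^{+}v=0$.

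The core step is to show that $H:=U(\mathfrak{hc})v$ is a simple $\mathfrak{hc}$-module. Suppose $H'\subsetneq H$ is a nonzero proper $\mathfrak{hc}$-submodule. Since $[{\frak s}',\mathfrak{hc}]=0$, the space $U({\frak s}')H'$ is a $\frak g'$-submodule of $M$; combining $U(\frak g)=U(\frak g')$ on $M$ (noted just after \eqref{direct-sum}) with simplicity of the $\frak g$-module $M$, this submodule equals $M$, so I may write $v=\sum_i x_ih_i'$ with $x_i\in U({\frak s}')$ and $h_i'\in H'$. Because $[L_0',\mathfrak{hc}]=0$, every element of $H$ (hence of $H'$) is an $L_0'$-eigenvector with eigenvalue $\lambda$, so one may assume each $x_i$ is homogeneous of $L_0'$-weight $0$. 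Decomposing $x_i$ along the triangular decomposition ${\frak s}'=({\frak s}')^{-}\oplus({\frak s}')^{0}\oplus({\frak s}')^{+}$ via a PBW basis, the condition $({\frak s}')^{+}h_i'=0$ annihilates every monomial with a nontrivial $({\frak s}')^{+}$-factor, and among the remaining terms only the weight-zero ones survive; this forces the $({\frak s}')^{-}$-factor to be trivial as well, leaving a polynomial in $L_0'$ and ${\bf c}_1'$ that acts on $h_i'$ as a scalar. Consequently $v\in\sum_i\mathbb C h_i'\subseteq H'$, so $H=U(\mathfrak{hc})v\subseteq H'$, contradicting $H'\subsetneq H$. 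Therefore $H$ is simple; being a submodule of the smooth $\mathfrak{hc}$-module $M$, it is smooth and hence (as noted in the proof of Corollary~\ref{tensor}) strictly simple.

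Corollary~\ref{tensor} now yields $M\cong U^{\frak g}\otimes H(z)^{\frak g}$ for some simple $U\in\mathcal{R}_{\frak s}$. To deduce the statement about $K$ itself, I would track $K$ through this decomposition: under the identification $M=U\otimes H$, the operators $L_p'$ and $G_p'$ act only on the $U$-factor, so $K_0=U_{\ast}\otimes H_0$, where $U_{\ast}\subseteq U$ is the intersection of the annihilators of $L_p$ and $G_{p-\frac12\tau(p)}$ for $p\geq r_M$, and $H_0\subseteq H$ is the corresponding annihilator in $H$. Any nonzero element of $U_{\ast}$ is annihilated by $\frak s^{+}$, hence is a highest-weight vector of the simple $\frak s$-module $U$; by simplicity such a vector is unique up to scalar, so $U_{\ast}=\mathbb C u_0$ is one-dimensional. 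Consequently $K=U(\mathfrak{hc})(u_0\otimes H_0)=u_0\otimes H\cong H$ as $\mathfrak{hc}$-modules, establishing both that $K$ is simple and that $M\cong U^{\frak g}\otimes K(z)^{\frak g}$.

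I expect the main obstacle to be the weight-zero PBW analysis in the second paragraph: one must keep track of the $\mathbb Z_2$-graded signs produced by the odd generators $G_p'$, and verify that every PBW monomial in $U({\frak s}')$ of $L_0'$-weight zero with a nontrivial $({\frak s}')^{-}$-factor is killed on a vector annihilated by $({\frak s}')^{+}$, leaving only polynomials in $L_0'$ and ${\bf c}_1'$ that act as scalars. The rest of the argument is a straightforward consequence of the direct-sum decomposition~\eqref{direct-sum} together with Lemmas~\ref{lemma-r} and~\ref{lemma-eigenvalue}.
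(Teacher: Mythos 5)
Your first two paragraphs are essentially correct and in fact give a shorter route than the paper to the key point that $M$ contains a simple smooth $\mathfrak{hc}$-submodule. The paper instead realizes $M$ as a quotient of $({\rm Ind}_{{\frak s}'^{(s_M)}}^{{\frak s}'}\mathbb C v_0)^{{\frak g}'}\otimes K'^{{\frak g}'}$ and identifies the kernel using the Verma-module structure of the super Virasoro algebra (Lemma \ref{singular-vector}, the singular vectors $P_1,P_2$), from which simplicity of $K'$ drops out; you obtain simplicity of $H=U(\mathfrak{hc})v$ directly from the weight-zero PBW collapse, using only that $v$ is an $L_0'$-eigenvector killed by $({\frak s}')^{+}$, that $[{\frak s}',\mathfrak{hc}]=0$, and that ${\bf c}_1'$ acts as a scalar. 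The sign issues you flag are harmless: every PBW monomial with a nontrivial $({\frak s}')^{+}$-factor kills $h_i'$ outright, and a weight-zero monomial with trivial positive part has trivial negative part, so only $\mathbb C[L_0',{\bf c}_1']$ survives. The auxiliary facts you use are also fine: $({\frak s}')^{+}K_0=0$ (and $L_0'K_0=0$ when $r_M=0,-1$) is immediate from the definition of $Y_{r_M}$, and $U({\frak s}')H'$ is a ${\frak g}$-submodule because on smooth vectors $\bar L_n,\bar G_r$ act as finite sums of quadratic expressions in $\mathfrak{hc}$. Up to this point your argument, combined with Corollary \ref{tensor}, correctly yields $M\cong U^{\frak g}\otimes H(z)^{\frak g}$.

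The gap is in your last paragraph, where the statement about $K$ itself must be proved. Your reduction $K_0=U_{*}\otimes H_0$ and $K\cong U_{*}\otimes H$ is fine, but the claim that $U_{*}$ is one-dimensional is asserted rather than proved: a nonzero vector annihilated by ${\frak s}^{+}$ is not automatically a highest weight vector, since it need not be an $L_0$-eigenvector, and $U$ is a priori only a simple smooth ${\frak s}$-module, which by Theorem \ref{SVir-modules} may be of the form ${\rm Ind}_{{\frak s}^{(0)}}^{{\frak s}}V$ with $L_k$ injective on $V$, where $L_0$ does not act semisimply. To close the argument you need two further steps: (i) show that $U_{*}\neq 0$ forces $U$ to be a highest weight module — for $r_M=0,-1$ this is easy because vectors of $U_{*}$ are in addition killed by $L_0$, hence are genuine singular weight vectors, but for $r_M=1$ you must rule out the induced modules of Theorem \ref{SVir-modules}, e.g.\ by the degree argument of \cite{LPX1} showing they have no nonzero ${\frak s}^{+}$-invariants, or by an eigenvector argument in the spirit of Lemmas \ref{free-L0} and \ref{lemma-eigenvalue}; and (ii) show that in a simple highest weight ${\frak s}$-module the annihilator of ${\frak s}^{+}$ is exactly the highest weight line (a singular vector of strictly larger weight would generate a proper nonzero submodule, and the top weight space is one-dimensional). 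Only then does $\dim U_{*}=1$ follow, giving $K\cong H$ and hence the simplicity of $K$ and $M\cong U^{\frak g}\otimes K(z)^{\frak g}$. This is precisely the point the paper sidesteps by constructing $U$ explicitly as a quotient of ${\rm Ind}_{{\frak s}'^{(s_M)}}^{{\frak s}'}\mathbb C v_0$, which is a highest weight module by construction, so that its final identification $K=K'$ is immediate; in your abstract setting the highest-weight property of $U$ must be argued, and that argument is missing.
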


\begin{proof} If $r_M=1$, then by Lemma \ref{lemma-eigenvalue} we know that there exists  $0\ne u\in K$ such that $L_0'u=\lambda u$ for some $\lambda \ne 0$; if $r_M=0,-1$, then $L_0'K=0$. In summary, for all  the three cases,  $L_0'$ has an eigenvector in $ K$.
Since $M$ is a simple $\frak g'$-module,  Schur's lemma implies that ${\bf c}'_1, {\bf c}_2, {\bf c}_3$ act as scalars on $M$. So $M$ is a weight $\frak g'$-module, and $K$ is a  weight module for $\frak g'^{(s_M, \, -\infty)}$, where $s_M=r_M-\delta_{r_M, 1}$. Take a weight vector $u_0\in K$ with $L'_0u_0=\lambda_0u_0$ for some $\lambda_0\in\mathbb C$.

Set $K'=U({\mathfrak {hc}})u_0$, which is an ${\mathfrak {hc}}$-submodule of $K$. Now we define  the $\frak g'$-module $K'^{{\frak g}'}$ with trivial action of  ${\frak s}'$. 
Let $\mathbb C v_0$ be the one-dimensional $\frak g'^{(s_M, \, -\infty)}$-module defined by 
\begin{eqnarray*}&&L'_0 v_0=\lambda_0v_0, \  {\bf c}_1'v_0=(c-\frac32+\frac{12z^2}{\ell})v_0,  \\
&&L_n'v_0=G_{m+\frac12}'v_0=H_{k}v_0=F_{k+\frac12}v_0={\bf c}_2v_0={\bf c}_3v_0=0, \,\,\, 0\ne n\ge r_M, m\ge s_M, k\in\mathbb Z.
\end{eqnarray*}
Then $\mathbb C v_0\otimes K'^{{\frak g}'}$ is a $\frak g'^{(s_M, \, -\infty)}$-module with central charge $(c_1-\frac32+\frac{12z^2}{\ell}, z)$ and level $\ell$.  There is a $\frak g'^{(s_M, \, -\infty)}$-module homomorphism
\begin{eqnarray*}
\varphi_{K'}: \mathbb C v_0\otimes K'^{{\frak g}'}& \longrightarrow & M,\cr
 v_0\otimes u &\mapsto & u, \forall u\in K',
\end{eqnarray*}
which is injective and can be extended to be the following $\frak g'$-module epimomorphism
\begin{eqnarray*}
\varphi:{\rm Ind}_{{\frak g}'^{(s_M, \, -\infty)}}^{\frak g'}(\mathbb C v_0\otimes K'^{{\frak g}'})&\longrightarrow & M, \cr
 x(v_0\otimes u)&\mapsto &xu, x\in U(\frak g'), u\in K'.
\end{eqnarray*}
%Since $M$ is a simple ${\frak g}'$ module and $\varphi\ne 0$, we see that $\varphi$ is surjective.
By Lemma \ref{induced} we know that
$${\rm Ind}_{\frak g'^{(s_M, \, -\infty)}}^{{\frak g}'}(\mathbb C v_0\otimes K'^{{\frak g}'})\cong ({\rm Ind}_{{\frak g'}^{(s_M, \, -\infty)}}^{{\frak g}'}\mathbb C v_0)\otimes K'^{{\frak g}'}
=({\rm Ind}_{{\frak s}'^{(s_M)}}^{{\frak s}'}\mathbb C v_0)^{{\frak g}'}\otimes K'^{{\frak g}'}.
$$Then we have the following $\frak g'$-module epimorphism
\begin{eqnarray*}
\varphi':({\rm Ind}_{{\frak s}'^{(s_M)}}^{{\frak s}'}\mathbb C v_0)^{{\frak g}'}\otimes K'^{{\frak g}'}&\longrightarrow& M,\cr
  xv_0\otimes u&\mapsto& xu, x\in U({\frak s}'), u\in K'.
\end{eqnarray*}
Note that $({\rm Ind}_{{\frak s}'^{(s_M)}}^{{\frak s}'}\mathbb C v_0)^{{\frak g}'}\otimes K'^{{\frak g}'}\cong{\rm Ind}_{{\frak s}'^{(s_M)}}^{{\frak s}'}(\mathbb C v_0\otimes K'^{{\frak g}'})$ as ${\frak s}'$-modules, and $\varphi'$ is also an ${\frak s}'$-module epimorphism, $\varphi'|_{\mathbb C v_0\otimes K'^{{\frak g}'}}$ is one-to-one, and $({\rm Ind}_{{\frak s}'^{(s_M)}}^{{\frak s}'}\mathbb C v_0)^{{\frak g}'}\otimes K'^{{\frak g}'}$ is a highest weight ${\frak s}'$-module.

Let $V={\rm Ind}_{{\frak s}'^{(s_M)}}^{{\frak s}'}\mathbb C v_0$ and  $\mathfrak{K}=\text{Ker}(\varphi')$. Since
$\mathbb C v_0\otimes K'^{{\frak g}'}=\{u\in V^{{\frak g}'}\otimes K'^{{\frak g}'}\mid L_0^{\prime}u=\lambda_0u\}$, 
we have \begin{equation}(\mathbb C v_0\otimes K'^{{\frak g}'})\cap \mathfrak{K}=0.\label{maximal}\end{equation}
Let $\mathfrak{K}^{\prime}$  be the sum of all ${\frak s}'$-submodules $W$ of $V^{{\frak g}'}\otimes K'^{{\frak g}'}$
with $W\cap(\mathbb C v_0\otimes K'^{{\frak g}'})=0$, that is, the unique maximal (weight) ${\frak s}'$-submodule of $V^{{\frak g}'}\otimes K'^{{\frak g}'}$ with trivial intersection with  $(\mathbb C v_0\otimes K'^{{\frak g}'})$. It is obvious that $\mathfrak{K}\subseteq\mathfrak{K}'$ by \eqref{maximal}.

Next we further show that $\mathfrak{K}=\mathfrak{K}'$. For that, take any ${\frak s}'$-
submodule $W$ of $V^{{\frak g}'}\otimes K'^{{\frak g}'}$ such that $W\cap(\mathbb C v_0\otimes K'^{{\frak g}'})=0$. Then for any weight vector $w=\sum_{\ml\in \mathbb{M}}
L'^{\ml}G'^{\mk}v_0\otimes u_{\ml, \mk}\in W$, where $u_{\ml, \mk}\in {K'}^{{\frak g}'},$
$$ L'^{\ml}=\cdots (L'_{-2})^{l_2}(L'_{-1})^{l_1}\text{ and }G'^{\mk}=\cdots (G'_{-\frac32})^{k_2}(G'_{-\frac12})^{k_1}\text{ if }r_M=1,0,\text{ or }$$
$$L'^{\ml}=\cdots (L'_{-2})^{l_2}\text{ and }G'^{\mk}=\cdots (G'_{-\frac32})^{k_2}\text{ if }r_M=-1,$$ and all ${\rm{w}}(\ml, \mk)\ge \frac12$  are equal. Note that
$H_{k}w=\sum_{\ml\in \mathbb{M}, \mk\in\mathbb M_1}L'^{\ml}G'^{\mk}v_0\otimes H_{k}u_{\ml, \mk}$  either
equals to $0$ or has the same weight as $w$ under the action of $L_0^{\prime}$.
So $U(\frak g')W\cap  (\mathbb C v_0\otimes K'^{{\frak g}'})=0$, i.e., $U(\frak g')W\subset \mathfrak{K}'$. 
Moreover, we have $U(\frak g')\mathfrak{K}'\cap  (\mathbb C v_0\otimes K'^{{\frak g}'})=0$. The maximality of $\mathfrak{K}'$ forces that $\mathfrak{K}'=U(\frak g')\mathfrak{K}'$ is a  proper $\frak g'$-submodule of $V^{{\frak g}'}\otimes K'^{{\frak g}'}$. Since $\mathfrak{K}$ is a maximal proper $\frak g'$-submodule of $V^{{\frak g}'}\otimes K'^{{\frak g}'}$, it follows that $\mathfrak{K}'\subseteq\mathfrak{K}$. Thus 
$\mathfrak{K}=\mathfrak{K}'$. 

Now $\frak K$ is just the unique maximal (weight) ${\frak s}'$-submodule of $V^{{\frak g}'}\otimes K'^{{\frak g}'}$ with trivial intersection with  $(\mathbb C v_0\otimes K'^{{\frak g}'})$.
By Lemma \ref{singular-vector} we know that $\mathfrak{K}$ is generated by
$\mathbb C P_1 v_0\otimes K'^{{\frak g}'}$ and $\mathbb C P_2 v_0\otimes K'^{{\frak g}'}$. Let  $V'$ be the  maximal  $\mathfrak s'$-submodule of $V$ generated by $P_1v_0$ and $P_2v_0$, then $\mathfrak{K}=V'^{{\frak g}'}\otimes K'^{{\frak g}'}$. Therefore,
\begin{equation}\label{cong}
M\cong (V^{{\frak g}'}\otimes K'^{{\frak g}'})/(V'^{{\frak g}'}\otimes K'^{{\frak g}'})\cong (V/V')^{{\frak g}'}\otimes K'^{{\frak g}'},\end{equation}
which forces  that $K'^{{\frak g}'}$ is a simple $\frak g'$-module and hence a simple ${\mathfrak {hc}}$-module. So   $K'$ is  a simple ${\mathfrak {hc}}$-module. By Corollary \ref{tensor} we know there exists a simple ${\frak s}$-module $U\in {\mathcal{R}}_{{\frak s}}$ such that $M\cong U^{\frak g}\otimes K'(z)^{\frak g}$. From this isomorphism and some computations we see that $K_0\subseteq v_0\otimes K'(z)^{\frak g}$, where $v_0$ is a highest weight vector. So $K=K'$.\end{proof}

We are now in a position to present the following main result on classifications of simple smooth $\frak g$-modules with nonzero level.

\begin{theo}\label{mainthm}
Let $M$ be a simple smooth $\frak g$-module  with level $\ell \not=0$  and central charge  $(c, z)$. The invariants $ n_M, r_M$ of $M$, $ K_0, K$ are defined as before. Then
\begin{equation*}
M\cong\begin{cases} K(z)^{\frak g}, &{\text{ if }}r_M=-\infty,\cr
U^{\frak g}\otimes H(z)^{\frak g}, &{\text{ if }}\ r_M=0, \pm1 \text{ or } n_M=0, 1, \cr
{\rm Ind}_{\frak g^{(0,-(n_M-1))}}^{\frak g}K_0, &{\text { otherwise,
}}\end{cases}
\end{equation*}
for  some simple $U\in \mathcal{R}_{{\frak s}}, H\in \mathcal R_{\mathfrak {hc}}$.
\end{theo}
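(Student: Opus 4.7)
The plan is to prove Theorem \ref{mainthm} by assembling the four case-by-case propositions established in this section. Given a simple smooth $\frak g$-module $M$ with level $\ell \neq 0$ and central charge $(c,z)$, the two invariants $n_M \in \mathbb{N}$ (by Lemma \ref{lemma-n}(i)) and $r_M \in \{-\infty\} \cup \{-1, 0, 1, 2, \ldots\}$ (by Lemma \ref{lemma-r}(i)) together partition all possibilities into the three cases listed in the theorem, so the main work is to check that the hypotheses of the relevant proposition are met in each case and that the cases cover every possibility.

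First I would handle the easiest branch: if $n_M = 0$ or $n_M = 1$, then Proposition \ref{prop-n=01} immediately gives $M \cong U^{\frak g} \otimes H(z)^{\frak g}$ for some simple $U \in \mathcal{R}_{\frak s}$ and some simple $H \in \mathcal{R}_{\mathfrak{hc}}$, matching the middle case of the theorem. So I may assume $n_M \geq 2$ from now on; in particular $K_0 \subseteq M_0$ is nontrivial (by Lemma \ref{lemma-r}(ii)) and $H_{n_M - 1}$ acts injectively on $K_0$, which is what is needed to invoke the PBW/injectivity arguments in the later cases.

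Next I would split on $r_M$. If $r_M = -\infty$, then Proposition \ref{prop-r-infty} directly yields $M = K(z)^{\frak g}$ with $K$ a simple $\mathfrak{hc}$-module, giving the first line. If $r_M \in \{-1, 0, 1\}$, Proposition \ref{prop-tensor} produces the tensor product decomposition $M \cong U^{\frak g} \otimes K(z)^{\frak g}$ with $K$ simple in $\mathcal{R}_{\mathfrak{hc}}$ and $U$ simple in $\mathcal{R}_{\frak s}$, again matching the middle case (taking $H = K$). Finally, if $r_M \geq 2$, then together with the standing assumption $n_M \geq 2$ we are exactly in the setting of Proposition \ref{prop-r2-n2}, which gives both that $K_0$ is a simple $\frak g^{(0, -(n_M - 1))}$-module and that $M \cong \mathrm{Ind}_{\frak g^{(0, -(n_M - 1))}}^{\frak g} K_0$, yielding the third line of the theorem.

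The main obstacle is really bookkeeping rather than any new mathematics: I must confirm that the two middle cases of the theorem truly collapse into a single ``tensor product'' conclusion, i.e., that the output of Proposition \ref{prop-n=01} and that of Proposition \ref{prop-tensor} are of the same form. This is clear since both give $M \cong U^{\frak g} \otimes H(z)^{\frak g}$ for simple smooth modules over $\frak s$ and $\mathfrak{hc}$ respectively. I also need to verify that the cases are exhaustive: when $n_M \geq 2$ and $r_M \geq 2$ we use Proposition \ref{prop-r2-n2}; all other combinations ($n_M \in \{0,1\}$, or $r_M \in \{-\infty, -1, 0, 1\}$) feed into one of the other three propositions. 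Since the invariants $n_M, r_M$ are defined for every simple smooth $\frak g$-module of nonzero level, every such $M$ falls into exactly one branch, which completes the proof.
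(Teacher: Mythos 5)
Your proposal is correct and follows essentially the same route as the paper, whose proof of Theorem \ref{mainthm} is precisely the assembly of Propositions \ref{prop-n=01}, \ref{prop-r-infty}, \ref{prop-r2-n2} and \ref{prop-tensor}; your additional verification that the cases on $(n_M, r_M)$ are exhaustive is the same implicit bookkeeping the paper relies on.
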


\begin{proof}
The assertion follows directly from Proposition \ref{prop-n=01}, Proposition \ref{prop-r-infty}, Proposition \ref{prop-r2-n2} and Proposition \ref{prop-tensor}.
\end{proof}

\section{Simple smooth ${\frak g}$-modules at level zero}

In this section we will determine   simple  smooth ${\frak g}$-module $M$ of level zero with a mild condition.  
%Throughout this section, for any  simple smooth ${\frak g}$-module, we always suppose that its central charge is $(c, z)$  and level $\ell=0$. Moreover, $H_0$ acts as a scalar $d\in\mathbb C$.

\subsection{Constructing simple smooth ${\frak g}$-modules}

For any $q\in\mathbb N, c, d, z\in\mathbb C$, let $V$ be 
a simple $\mathfrak g^{(0, -q)}$-module with the actions of $H_0, {\bf c}_1, {\bf c}_2, {\bf c}_3$ as scalars $d, c, z, l$, respectively.
In this section we mainly consider the corresponding induced module $\mathrm{Ind}_{\frak g^{(0, -q)}}^{\frak g}V$.
It is well known that  $\mathrm{Ind}_{\frak g^{(0, -q)}}^{\frak g}V$ becomes the Verma module if $q=0$ and $V$ is a one dimensional  $\frak g^{(0, 0)}$-module;
$\mathrm{Ind}_{\frak g^{(0, -q)}}^{\frak g}V$ becomes a Whittaker module if $q=0$ and $V={\rm Ind}_{\frak g^{(k, 0)}}^{\frak g^{(0, 0)}}W$, where $W$ is a finite dimensional simple $\frak g^{(k, 0)}$-module for some $k\in\mathbb Z_+$ (see Section 6).

%Let $V$ be a simple $\mathfrak g^{(0, -q)}$-module.
%According to the $\mathrm{PBW}$ Theorem, every element of ${\rm Ind} (V)$ can be uniquely written in the following form
%\begin{equation}\label{def2.1}
%\sum_{\mi, \mj\in\mathbb{M},\mk, \ml\in\mathbb{M}_1}L^{\mi}H^{\mj}G^{\mk}F^{\ml} v_{\mi, \mj, \mk,\ml},
%\end{equation} where all  $v_{\mi, \mj, \mk,\ml}\in V$ and only finitely many of them are nonzero.  

\begin{theo}\label{simple-theo} Let  $c, z, d\in\mathbb C, q\in\mathbb N$, and  $V$   be 
a simple $\mathfrak g^{(0, -q)}$-module with central charge $(c, z)$, level 0,  and the action of $H_0$ as a scalar $d$. Assume that there exists  $t\in\mathbb Z_+$ satisfying the following two conditions:
\begin{itemize}
\item[{\rm (a)}] the action of $H_{t}$ on $V$ is injective;
\item[{\rm (b)}] $H_iV=0$ for all $i>t$ and $L_jV=0$ for all $j>t+q$.
\end{itemize}
Then
\begin{itemize}
\item[{\rm (i)}] $G_{j-\frac12}V=0$ for all $j>t+q$, and $F_{i-\frac12}V=0$ for all $i>t$,
\item[\rm (ii)]   the induced module $\mathrm{Ind}_{\frak g^{(0, -q)}}^{\frak g}V$ is a simple ${\frak g}$-module.
\end{itemize}
\end{theo}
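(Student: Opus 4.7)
My plan is to prove (i) first---which converts the hypotheses on $H$ and $L$ into analogous vanishing statements for $F$ and $G$---and then to use (i) to execute a PBW-type degree reduction that proves (ii).

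For (i), I would first establish the ``interior'' vanishings $F_rV=0$ for $r\ge t+\tfrac32$ and $G_rV=0$ for $r\ge t+q+\tfrac32$ from the commutators $[H_m,G_p]=mF_{m+p}$ and $[L_m,G_p]=(\tfrac m2-p)G_{m+p}$: for $p\ge\tfrac12$ we have $G_pV\subseteq V$, so on any $v\in V$ the identity $mF_{m+p}v=H_mG_pv-G_pH_mv$ kills both sides when $m>t$, and the analogous identity for $G$ uses $L_mV=0$ when $m>t+q$. The boundary cases $F_{t+1/2}V=0$ and $G_{t+q+1/2}V=0$ I would handle with a simplicity argument. Set $W=\{v\in V:F_{t+1/2}v=0\}$; using the interior vanishings and checking each bracket $[F_{t+1/2},x]$ for $x\in\mathfrak g^{(0,-q)}$ one verifies that $W$ is a $\mathfrak g^{(0,-q)}$-submodule of $V$. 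At level zero $[F_{t+1/2},F_{t+1/2}]=0$, so $F_{t+1/2}$ is square-zero, hence $F_{t+1/2}V\subseteq W$; simplicity of $V$ then forces $W=V$. The identical strategy handles $W'=\{v\in V:G_{t+q+1/2}v=0\}$, now with $G_{t+q+1/2}^2=L_{2t+2q+1}=0$ on $V$ playing the role of the square-zero relation; the verification that $W'$ is a submodule needs the already-established $F_sV=0$ for $s\ge t+\tfrac12$ to control the bracket $[G_{t+q+1/2},H_m]=-mF_{t+q+1/2+m}$.

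For (ii), I would take any nonzero element $w$ of an arbitrary nonzero $\mathfrak g$-submodule $N\subseteq\mathrm{Ind}_{\mathfrak g^{(0,-q)}}^{\mathfrak g}V$, expand it in the PBW basis $\sum L^{\mi}H^{\mj}G^{\mk}F^{\ml}v_{\mi,\mj,\mk,\ml}$, and induct on $\deg(w)$ under the principal total order. The goal is to produce, by successive application of operators in $U(\mathfrak g^+)$, a nonzero element of $N\cap V$; simplicity of $V$ over $\mathfrak g^{(0,-q)}$ together with the PBW theorem then forces $N=\mathrm{Ind}V$. Following the priority of the principal order, one reduces $\mk$ first using $G_p$ ($p>0$) and $[G_p,G_{-p}]=2L_0+\tfrac13(p^2-\tfrac14)\mathbf c_1$, then $\mi$ using $L_p$ and $[L_p,L_{-p}]=2pL_0+\tfrac{1}{12}(p^3-p)\mathbf c_1$, and finally the Heisenberg--Clifford tails $\mj,\ml$.

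The hardest step will be the final reduction of $\mj,\ml$ at level zero, because the direct brackets $[H_m,H_{-m}]=m\mathbf c_3$ and $[F_p,F_{-p}]=\mathbf c_3$ both vanish. The remedy is to use mixed brackets $[G_r,F_{-r}]=H_0+(2r+1)\mathbf c_2$ and $[G_r,H_{-r+1/2}]=(r-\tfrac12)F_{1/2}$: applied to a vector $v$ annihilated by $G_r$ for $r$ large (guaranteed by part (i)), these produce the element $(d+(2r+1)z)v$ or $(r-\tfrac12)F_{1/2}v$ in $V$. One must choose the parameter $r$ to avoid degenerate coincidences---for instance, $d+(2r+1)z=0$ for all $r$ only when $(d,z)=(0,0)$, in which case the injectivity of $H_t$ must be invoked separately via $[G_r,H_{-r+t+1/2}]$ to extract an $F_{t+1/2}$-type term. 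Carrying out this bookkeeping, making sure each reduction strictly decreases the principal order without regenerating higher-priority entries, is the principal technical challenge.
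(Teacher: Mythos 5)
Your part (i) is correct and is essentially the paper's argument (interior vanishing from $[H_m,G_p]=mF_{m+p}$ and $[L_m,G_p]=(\tfrac m2-p)G_{m+p}$, boundary cases by a square-zero operator plus a submodule/simplicity argument); your ordering, with the $F$-boundary settled before the $G$-boundary, is fine. The gap is in part (ii): your degree-reduction uses the wrong contractions, and the operators you apply are not guaranteed to act injectively on $V$ under the hypotheses of this theorem. To reduce $\mk$ and $\mi$ you propose contracting $G_p$ against $G_{-p}$ and $L_p$ against $L_{-p}$, producing $2L_0+\tfrac13(p^2-\tfrac14)\mathbf c_1$ resp.\ $2pL_0+\tfrac1{12}(p^3-p)\mathbf c_1$. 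But nothing in the hypotheses controls the action of $L_0$ on $V$ (it need not be injective, semisimple, or even locally finite), and the mode $p$ is forced by the tail you are cancelling, so you cannot tune it to avoid a kernel. If a reduction of the $\mathfrak s$-part that never invokes hypothesis (a) worked, it would show simplicity of induced modules in situations where it fails (e.g.\ Verma-type modules with singular vectors); indeed condition (a) with $t\in\mathbb Z_+$ is exactly what rules such $V$ out, so it must enter the reduction of $\mi,\mk$ as well. The paper's choice is to apply $F_{\hat{\underline k}+t-\frac12}$ (for the $G$-part) and $H_{\hat{\underline i}+t}$ (for the $L$-part): the leading contractions are then $[G_{-\hat{\underline k}+\frac12},F_{\hat{\underline k}+t-\frac12}]=H_t$ and $[H_{\hat{\underline i}+t},L_{-\hat{\underline i}}]=(\hat{\underline i}+t)H_t$, nonzero multiples of $H_t$, which is injective by (a).

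The same problem recurs, more seriously, in your treatment of the Heisenberg--Clifford tail. The scalar $d+(2r+1)z$ coming from $[G_r,F_{-r}]=H_0+(2r+1)\mathbf c_2$ can vanish for the forced value of $r$, and Theorem \ref{simple-theo} assumes nothing like $d+(n+1)z\ne0$ (that hypothesis belongs to the companion Theorem \ref{simple-theo1}, where $t=0$). Your fallback, extracting an ``$F_{t+\frac12}$-type term'' from $[G_r,H_{-r+t+\frac12}]$, cannot rescue the argument: $F_{t+\frac12}V=0$ by your own part (i), and $F_{\frac12}$ is square-zero, hence never injective, so the extracted element of $V$ may simply be zero and no contradiction with minimality of $\deg(v)$ results. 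The correct (and the paper's) reduction of the tail again uses (a): apply $L_{\hat{\hat j}+t}$ to the $H$-tail, whose leading contraction $[L_{\hat{\hat j}+t},H_{-\hat{\hat j}}]=\hat{\hat j}\,H_t$ is injective on $V$, and apply $G_{\hat{\hat l}+t-\frac12}$ to the $F$-tail, whose leading contraction is again $H_t$ (the central $\mathbf c_2$-term drops out since $t\ne0$). Until your reductions are rebuilt around these $t$-shifted operators, the inductive step in (ii) does not go through.
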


\begin{proof}
(i) By (b), for $j\geq t+q$ we have $G_{j+\frac12}^2V=L_{2j+1}V=0$. If $G_{j+\frac12}V=0$ we are done. Otherwise,   $W=G_{j+\frac12}V$    is a proper subspace of $V$.   For $r\in\mathbb{Z}_+$, we have
$$F_{i+\frac12}W\subseteq W,\,\,\,H_iW\subseteq W,\forall i\ge -q,$$
$$
G_{r-\frac{1}{2}}W=G_{r-\frac{1}{2}}G_{j+\frac12}V=L_{r+j}V-G_{j+\frac12}G_{r-\frac{1}{2}}V\subset G_{j+\frac12}V=W,
$$
$$
2L_rW=[G_{r-\frac{1}{2}},G_{\frac{1}{2}}]G_{j+\frac12}V
=G_{r-\frac{1}{2}}G_{\frac{1}{2}}G_{j+\frac12}V+G_{\frac{1}{2}}G_{r-\frac{1}{2}}G_{j+\frac12}V\subset W.
$$
It follows that $W$ is a proper $\mathfrak g^{(0, -q)}$-submodule of $V$. Then $W=G_{j+\frac12}V=0$ for $j\geq t+q$ since $V$ is simple.

For any $i>t$, $iF_{i+\frac12}V=[H_i, G_{\frac12}]V=0$. Note that $F_{t+\frac12}^2V=0$. If $F_{t+\frac12}V=0$ we are done. Otherwise   $U=F_{t+\frac12}V$    is a proper subspace of $V$.  We can easily verify that 
$$F_{i+\frac12}U\subseteq U,\,\,\,H_iU\subseteq U,\forall i\ge -q,$$
$$
G_{r+\frac{1}{2}}U\subseteq U,
\,\,\,
L_rU\subset U, \forall r\ge 0.
$$
So $U$ is a $\mathfrak g^{(0, -q)}$-submodule of $V$.
Thus $U=F_{t+\frac12}V=0$.

 (ii) Suppose that $W$ is a nonzero proper $\mathfrak g$-submodule of $\mathrm{Ind}_{\frak g^{(0, -q)}}^{\frak g}V$. Now we are going to deduce some contradictions. Let $v\in W\setminus \{0\}$ such that  $\mathrm{deg}(v)=(\bmi,\bmj,\bmk, \bml)$ is minimal. Write
\begin{equation}
v=\sum_{(\mi, \mj, \mk, \ml)\in \supp(v)} L^{\mi}H^{\mj}G^{\mk}F^{\ml}v_{\mi, \mj, \mk, \ml},\label{v-express1}
\end{equation}
where all  $v_{\mi, \mj, \mk, \ml}\in V$ and only finitely many of them are nonzero. Note that $\mj=\mathbf{0}$ or $\hat{j}>q$, and 
$\ml=\mathbf{0}$ or $\hat{l}>q$ for any  $(\mi, \mj, \mk, \ml)\in \mathrm{supp}(v)$.

%{\bf Claim 1.} $\bmk=\bf 0$.

{\bf Claim 1.} $\bmk=\bf 0$.

If $\bmk\neq\mathbf{0}$, then $\hat{\underline{k}}>0$.
 We will show that $\deg( F_{\hat{\underline k}+t-\frac12}v)=(\bmi, \bmj, \bmk^{\prime}, \bml).$
It suffices to consider those $v_{\mi, \mj, \mk, \ml}$
with $$L^{\mi}H^{\mj}G^{\mk}F^{\ml}v_{\mi, \mj, \mk, \ml}\neq0.$$

 For any  $(\mi, \mj, \mk, \ml)\in \mathrm{supp}(v)$, since  $F_{\hat{\underline  k}+t-\frac12}v_{\mi, \mj, \mk, \ml}=0$
we see that
\begin{eqnarray*}&&F_{\hat{\underline k}+t-\frac12}L^{\mi}H^{\mj}G^{\mk}F^{\ml}v_{\mi, \mj, \mk, \ml}=[F_{\hat{\underline k}+t-\frac12}, L^{\mi}]H^{\mj}G^{\mathbf{k}}F^{\ml}v_{\mi, \mj, \mk, \ml}\pm L^{\mi}H^{\mj}[F_{\hat{\underline k}+t-\frac12}, G^{\mathbf{k}}]F^{\ml}v_{\mi, \mj, \mk, \ml}.
\end{eqnarray*}
Since $\text{w}([F_{\hat{\underline k}+t-\frac12}, L^{\mi}]H^{\mj}G^{\mathbf{k}}F^{\ml}v_{\mi, \mj, \mk, \ml})\le \text{w}(\mi, \mj, \mk, \ml)-\hat{\underline k}$ (we have used the property of $(\bmi,\bmj,\bmk, \bml)$) and  $\text{w}([L^{\mi}H^{\mj}[F_{\hat{\underline k}+t-\frac12}, G^{\mathbf{k}}]F^{\ml}v_{\mi, \mj, \mk, \ml})\le \text{w}(\mi, \mj, \mk, \ml)-\hat{\underline k}+\frac12$ by (a), (b) and (i), we deduce that 
\begin{equation}\text w( F_{\hat{\underline k}+t-\frac12}L^\mi H^{\mj}G^{\mk}F^{\ml}v_{\mi, \mj, \mk, \ml})\le\text w(\mi, \mj, \mk, \ml)-\hat{\underline k}+\frac12.\label{degree-F}\end{equation}

 For any  $(\mi, \mj, \mk, \ml)\in \mathrm{supp}(v)$ with
$
{\rm w}(\mi, \mj, \mk, \ml)<{\rm w}(\bmi, \bmj, \bmk, \bml),
$
by \eqref{degree-F} we see that $$\text w( F_{\hat{\underline k}+t-\frac12}L^\mi H^{\mj}G^{\mk}F^{\ml}v_{\mi, \mj, \mk, \ml})\le\text w(\mi, \mj, \mk, \ml)-\hat{\underline k}+\frac12<{\rm w}(\bmi, \bmj, \bmk, \bml)-\hat{\underline k}+\frac12.$$So
$$
\deg F_{\hat{\underline k}+t-\frac12}L^{\mi}H^{\mj}G^{\mk}F^{\ml}v_{\mi, \mj, \mk, \ml}\prec (\bmi, \bmj, \bmk^{\prime}, \bml).
$$

Now we suppose that ${\rm w}(\mi, \mj, \mk, \ml)={\rm w}(\bmi, \bmj, \bmk, \bml)$
 and $ \mathbf{k} \prec \bmk$ and denote  $$\mathrm{deg}(F_{\hat{\underline k}+t-\frac12}L^{\mi}H^{\mj}G^{\mk}F^{\ml}v_{\mi, \mj, \mk, \ml})
=(\mi_1, \mj_1, \mk_1, \ml_1)\in \mathbb{M}\times \mathbb{M}_1.$$

 { If $\hat{k}>\hat{\underline k}$, then ${\rm w}(\mi_1, \mj_1, \mk_1, \ml_1)<{\rm w}(\bmi, \bmj, \bmk^\prime, \bml)$} and  
$$\mathrm{deg}(F_{\hat{{\underline k}}+t-\frac12}L^{\mi}H^{\mj}G^{\mk}F^{\ml}v_{\mi, \mj, \mk, \ml})
=(\mi_1, \mj_1, \mk_1, \ml_1)\prec(\bmi, \bmj, \bmk^\prime, \bml).$$
If $\hat{k}=\hat{\underline k}$,
then ${\rm w}(\mi_1, \mj_1, \mk_1, \ml_1)={\rm w}(\mi, \mj, \mk^\prime, \ml)$.
Since $ \mathbf{k}^{\prime} \prec\bmk^{\prime}$, we also have $(\mi_1, \mj_1, \mk_1, \ml_1)\prec(\bmi, \bmj, \bmk^\prime, \bml)$.

If ${\rm w}(\mi, \mj, \mk, \ml)={\rm w}(\bmi, \bmj, \bmk, \bml)$ and $\mathbf{k}=\bmk$, it is easy to see that
$$\mathrm{deg}(L^{\mi}H^{\mj}[F_{\hat{\underline k}+t-\frac12}, G^{\mathbf{k}}]F^{\mathbf{l}}v_{\mi, \mj, \mk, \ml})=
(\mi, \mj, \mk^\prime, \ml)\preceq (\bmi, \bmj, \bmk^\prime, \bml),$$
$$\mathrm{deg}([F_{\hat{\underline k}+t-\frac12}, L^{\mi}]H^{\mj}G^{\mk}F^{\ml}v_{\mi, \mj, \mk, \ml})\prec(\mi, \mj, \mk^\prime, \ml)$$
where the equality holds if and only if $(\mi, \mj, \mk, \ml)=(\bmi, \bmj, \bmk, \bml)$.

So  $F_{\hat{\underline k}+t-\frac12}v\in W$ and $\deg( F_{\hat{\underline k}+t-\frac12}v)=(\bmi, \bmj, \bmk^{\prime}, \bml),$
which contradicts the choice of $v$. Consequently,  Claim 1 holds. 
%{\bf (This is ok only  for $t>0$.)}

{\bf Claim 2.} $\bmi=\bf 0$.

If  $\bmi\neq\mathbf{0}$, then $\hat{\underline{i}}>0$. We will show that $\deg( H_{\hat{\underline i}+t}v)=(\bmi^\prime, \bmj, \bf0, \bml).$
It suffices to consider those $v_{\mi, \mj, \mk, \ml}$
with $$L^{\mi}H^{\mj}G^{\mk}F^{\ml}v_{\mi, \mj, \mk, \ml}\neq0.$$

 For any  $(\mi, \mj, \mk, \ml)\in \mathrm{supp}(v)$ with
$
{\rm w}(\mi, \mj, \mk, \ml)<{\rm w}(\bmi, \bmj, \bf0, \bml),
$
\begin{eqnarray*}&&H_{\hat{\underline i}+t}L^{\mi}H^{\mj}G^\mk F^{\ml}v_{\mi, \mj, \mk, \ml}=[H_{\hat{\underline i}+t}, L^{\mi}]H^{\mj}G^\mk F^{\ml}v_{\mi, \mj, \mk, \ml}\pm L^{\mi}H^\mj[H_{\hat{\underline i}+t}, G^{\mk}]F^{\ml}v_{\mi, \mj, \mk, \ml}.
\end{eqnarray*}
From (a), (b), and (i), we see that $${\rm w}( L^{\mi}H^\mj[H_{\hat{\underline i}+t}, G^{\mk}]F^{\ml}v_{\mi, \mj, \mk, \ml})\le{\rm w}( {\mi, \mj, \mk, \ml})-\hat {\underline i}-\frac12\ {\rm and}\ {\rm w}([H_{\hat{\underline i}+t}, L^{\mi}]H^{\mj}G^\mk F^{\ml})\le {\rm w}( {\mi, \mj, \mk, \ml})-\hat {\underline i}.$$ Then
$\text w(H_{\hat{\underline i}+t}L^{\mi}H^{\mj}G^\mk F^{\ml}v_{\mi, \mj, \mk, \ml}) \le {\rm w}(\mi, \mj,\mk, \ml)-\hat {\underline i}<{\rm w}(\bmi', \bmj, \mathbf0, \bml)$.
So $$\text{deg}(H_{\hat{\underline i}+t}L^{\mi}H^{\mj}G^\mk F^{\ml}v_{\mi, \mj, \mk, \ml})\prec  (\bmi', \bmj, \mathbf0, \bml).$$

Now we suppose that ${\rm w}(\mi, \mj, \bf0, \ml)={\rm w}(\bmi, \bmj, \bf0, \bml)$ and $(\mi, \mj, \bf0, \ml)\prec(\bmi, \bmj, \bf0, \bml)$. Then $\hat i\ge \hat {\underline i}$. It is clear that 
$$\mathrm{deg}([H_{\hat{\underline i}+t}, L^{\mi}]H^{\mj}F^{\ml}v_{\mi, \mj, \bf, \ml})\preceq(\bmi^\prime, \bmj, \bf0, \bml), $$
where the equality holds if and only if $(\mi, \mj, \bf0, \ml)=(\bmi, \bmj, \bf0, \bml)$.

So  $H_{\hat{\underline i}+t}v\in W$ and $\deg( H_{\hat{\underline i}+t }v)=(\bmi^{\prime}, \bmj, \bmk, \bml),$
which contradicts the choice of $v$. Consequently,  Claim 2 holds. 

From Claims 1 and 2 we know that $\mi= \mk=\bf 0$ for $(\mi,\mj, \mk, \ml)\in {\rm supp}(v)$ with $ {\rm w}(\mi,\mj, \mk, \ml)={\rm w}(\bmi , \bmj, \bmk, \bml)$.

Set $(\mi_1, \mj_1, \mk_1, \ml_1):= {\rm max}\,\{ (\mi,\mj, \mk, \ml)\in {\rm supp}(v)\mid {\rm w}(\mi,\mj, \mk, \ml)<{\rm w}({\bf0}, \bmj, {\bf 0}, \bml)\}$. Using the arguments in Claims 1 and 2, we deduce that   ${\rm deg}\,F_{\hat{k}_1+t-\frac12}v=(\mi_1, \mj_1, \mk_1^\prime, \ml_1)$ if $\mk_1\ne 0$; ${\rm deg}\,H_{\hat{i}_1+t}v=(\mi_1^\prime, \mj_1, \mk_1, \ml_1)$ if $\mk_1=\bf0$ and $\mi_1\ne 0$. 

Repeating these arguments we can take 
 \begin{equation}v=\sum H^{\mj}F^{\ml}v_{\mj, \ml}\in W\setminus\{0\}\label{v-express2}\end{equation} with ${\rm deg}_{\frak {hc}} (v)=(\bmj, \bml)$. If $\hat{\hat{j}}>0$, then ${\rm deg}\,L_{\hat{\hat j}+t}v=(\bmj^{\prime\prime}, \bml)$.

Now we can take $v=\sum F^{\ml}v_{\ml}\in W\setminus\{0\}$ with ${\rm deg} (v)=\bml$, if $\hat{\hat{l}}>0$, then ${\rm deg}\,G_{\hat{\hat j}+t-\frac12}v=\bml^{\prime\prime}$. 

 So such $W$ does not exist, which implies  the simplicity of $\mathrm{Ind}_{\frak g^{(0, -q)}}^{\frak g}V$.
\end{proof}

Now we consider the case of $t=0$ for Theorem \ref{simple-theo}.

\begin{theo}\label{simple-theo1}  
Let  $c, z, d\in\mathbb C, q\in\mathbb N$, and  $V$   be 
a simple $\mathfrak g^{(0, -q)}$-module with central charge $(c, z)$, level 0,  and the action of $H_0$ as a scalar $d$.
 Assume that  $V$ satisfies the following conditions:
\begin{itemize}
\item[{\rm (a)}] $d+(n+1)z\ne0$ for any $n\in\mathbb Z^*$; %$d=-z\ne 0$ or $d\not\in\mathbb Zz$;
\item[{\rm (b)}] $L_jV=0$ for all $j>q$ and $H_iV=0$ for all $i>0$.
\end{itemize}
Then
\begin{itemize}
\item[{\rm (i)}] $G_{j-\frac12}V=0$ for all $j>q$, and $F_{i-\frac12}V=0$ for all $i>0$,
\item[{\rm (ii)}]  the induced module $\mathrm{Ind}_{\frak g^{(0, -q)}}^{\frak g}V$ is a simple ${\frak g}$-module.
\end{itemize}
\end{theo}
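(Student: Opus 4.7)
The plan is to adapt the proof of Theorem~\ref{simple-theo} to the boundary case $t=0$, where condition~(a) of the present theorem takes over the role that the injectivity of $H_t$ played there. The unifying observation is that every commutator bracket which produced a multiple of $H_t$ in the earlier proof now lands in $\mathbb{C}H_0\oplus\mathbb{C}{\bf c}_2$, acting on $V$ as a scalar of the form $d+(n+1)z$ for a specific $n\in\mathbb{Z}^*$ determined by the bracket, so that condition~(a) guarantees this scalar to be nonzero.

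For part~(i), I first establish $F_{i-\frac12}V=0$ for $i\ge 2$ directly from the identity $(i-1)F_{i-\frac12}=[H_{i-1},G_{\frac12}]$, using $H_{i-1}V=0$ and $G_{\frac12}V\subseteq V$. The genuinely new case is $F_{\frac12}V=0$: at level zero $F_{\frac12}^2=\tfrac12[F_{\frac12},F_{\frac12}]=0$, and using $[H_n,F_{\frac12}]=0$, $[L_n,F_{\frac12}]=-\frac{n+1}{2}F_{n+\frac12}$ (a scalar multiple of $F_{\frac12}$ when $n=0$ and annihilating $V$ for $n\ge 1$ by the case just proved), $[G_p,F_{\frac12}]=H_{p+\frac12}$ (annihilating $V$ for $p\ge\frac12$), and $[F_p,F_{\frac12}]=\delta_{p+\frac12,0}{\bf c}_3=0$ at level zero, I verify that $F_{\frac12}V$ is a $\mathfrak{g}^{(0,-q)}$-submodule of $V$; simplicity together with $F_{\frac12}^2=0$ then forces $F_{\frac12}V=0$. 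Finally $G_{j-\frac12}V=0$ for $j>q$ follows exactly as in Theorem~\ref{simple-theo}(i): $G_{j-\frac12}^2V=L_{2j-1}V=0$, and the submodule check for $W=G_{j-\frac12}V$ now makes use of the $F$-vanishings just established.

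For part~(ii), I mimic the four-step reduction in the proof of Theorem~\ref{simple-theo}(ii). Supposing for contradiction that there is a nonzero proper submodule of ${\rm Ind}_{\mathfrak{g}^{(0,-q)}}^{\mathfrak{g}}V$, I choose $v$ in it of minimal degree $(\bmi,\bmj,\bmk,\bml)$ and successively reduce the four components. To kill $\bmk$ I apply $F_{\hat{\underline k}-\frac12}$: it annihilates $V$ by~(i), so the leading contribution comes from $[F_{\hat{\underline k}-\frac12},G_{-\hat{\underline k}+\frac12}]=H_0+(2-2\hat{\underline k}){\bf c}_2$, acting on $V$ as $d+(2-2\hat{\underline k})z\ne 0$ (condition~(a) with $n=1-2\hat{\underline k}\le -1$). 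To kill $\bmi$ I apply $H_{\hat{\underline i}}$, whose bracket with $L_{-\hat{\underline i}}$ yields $\hat{\underline i}\bigl(d+(1-\hat{\underline i})z\bigr)\ne 0$ on $V$ (condition~(a) with $n=-\hat{\underline i}$). To kill $\bmj$ I apply $L_{\hat{\hat j}+q}$ (annihilating $V$ since $\hat{\hat j}+q>q$), producing the scalar $(\hat{\hat j}+q)\bigl(d+(\hat{\hat j}+q+1)z\bigr)\ne 0$ (condition~(a) with $n=\hat{\hat j}+q\ge 1$). Finally, to kill $\bml$ I apply $G_{\hat{\hat l}+q-\frac12}$ (annihilating $V$ by~(i)), producing the scalar $d+(2\hat{\hat l}+2q)z\ne 0$ (condition~(a) with $n=2\hat{\hat l}+2q-1\ge 1$). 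The principal-order bookkeeping showing that the chosen operator strictly decreases the leading degree while lower-weight terms in $\mathrm{supp}(v)$ contribute only strictly lower-weight outputs proceeds exactly as in Theorem~\ref{simple-theo}(ii), contradicting the minimality of $\deg(v)$.

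The main obstacle is twofold. First, in part~(i) the bracket identity $(i-1)F_{i-\frac12}=[H_{i-1},G_{\frac12}]$ degenerates at $i=1$, so the uniform argument fails at $F_{\frac12}$; this forces the detour through the level-zero relation $F_{\frac12}^2=0$ and a genuine $\mathfrak{g}^{(0,-q)}$-submodule check. Second, in part~(ii) one must confirm for each of the four reduction steps that the integer $n$ produced by the chosen bracket lies in $\mathbb{Z}^*$ so that~(a) applies, and match the four brackets to integers $n$ of different signs; a direct inspection from $\hat{\underline k},\hat{\underline i},\hat{\hat j},\hat{\hat l}\ge 1$ confirms this, but the accounting is delicate since $n$ ranges over both signs across the four cases.
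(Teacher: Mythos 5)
Your part (i) is fine and is essentially the paper's argument (the paper simply invokes the proof of Theorem \ref{simple-theo}(i) with $t=0$); your reorganization, doing $F_{i-\frac12}$ for $i\ge2$ via $[H_{i-1},G_{\frac12}]$, then the submodule check for $F_{\frac12}V$, then $G$, is harmless. The computed scalars in part (ii) are also the right ones: $d+(2-2\hat{\underline k})z$, $\hat{\underline i}(d+(1-\hat{\underline i})z)$, $m(d+(m+1)z)$, $d+2mz$, all nonzero by (a).

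However, part (ii) has a genuine gap: the assertion that "the principal-order bookkeeping proceeds exactly as in Theorem \ref{simple-theo}(ii)" is false at $t=0$, and this is precisely the point where the paper had to change the argument. In the $t\ge1$ proof the designated term (the one where the bracket produces $H_t$, absorbed injectively into $V$) loses weight $\hat{\underline k}-\frac12$ (resp. $\hat{\underline i}$), while every "conversion" term in which an $L_{-n}$ or $G_{-s+\frac12}$ factor is traded for a creation operator $H_{\hat{\underline i}+t-n}$ or $H_{\hat{\underline k}+t-s}$ loses the extra amount $t$; this margin is what makes the designated monomial the leading term of the output. At $t=0$ the margin disappears: applying $H_{\hat{\underline i}}$ (or $F_{\hat{\underline k}-\frac12}$, or $L$, or $G$ in your last two steps) produces conversion terms of exactly the same weight as your designated scalar term, and in the refined order of Definition 2.9 these conversion terms are strictly larger (removing a higher-indexed $L$, $G$, $H$ or $F$ factor gives a larger $\mi$, $\mk$, $\mj$ or $\ml$). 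Worse, the coefficient of your designated monomial can also receive contributions from other same-weight elements of $\mathrm{supp}(v)$ (e.g.\ elements with $\mi_2=\bmi-\epsilon_{\hat{\underline i}}+\epsilon_n$, $\mj_2=\bmj-\epsilon_{n-\hat{\underline i}}$), so the non-vanishing of $H_{\hat{\underline i}}v$ (and likewise of the outputs in the other three steps) is not established by your argument; since non-vanishing is the whole content of the contradiction with minimality, the proof does not close. A related omission: applying $L$ to kill $\bmj$ unconditionally before killing $\bml$ fails when the largest $F$-index exceeds the largest $H$-index, because then $[L_{m_h},F_{-r}]$ creates negative $F$-modes of the same weight; the paper's proof splits into the cases $m_h\ge m_f$ and $m_h<m_f$ (using $L_{m_h}$ or $G_{m_f-\frac12}$ accordingly) exactly to avoid this. (Also, with the paper's indexing convention the operators are $L_{\hat{\hat j}}$, $G_{\hat{\hat l}-\frac12}$ with $\hat{\hat j},\hat{\hat l}>q$, not index $+q$, but that is only a bookkeeping slip.)

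The paper's actual proof of (ii) is structured differently to get around these issues: it takes $v$ of minimal $\deg_{\frak s}$, kills the $G$-part with $F_{\hat{\underline k}-\frac12}$, and then kills the entire $L$-block in one stroke by applying the transposed monomial $(L^{\bmi})^*=H_1^{i_1}H_2^{i_2}\cdots$ and invoking the Shapovalov-type triangularity $(L^{\bmi})^*L^{\mi}u=0$ for $(\mi,\mathbf0)\prec(\bmi,\mathbf0)$ together with $(L^{\bmi})^*L^{\bmi}u=au$, $a\ne0$ (here condition (a) enters through the pairings $[H_m,L_{-m}]$), treating the $\frak{hc}$-part as coefficients in ${\rm Ind}_{\frak{hc}^{(0)}}^{\frak{hc}}V$; only after reducing to an element of the form $\sum H^{\mj}F^{\ml}u_{\mj,\ml}$ with $u_{\mj,\ml}\in V$ does it perform the $\frak{hc}$-reduction, with the $m_h$ versus $m_f$ case distinction. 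It also proves the stronger statement that every nonzero submodule meets $V$ (not using simplicity of $V$), which is reused later in Proposition \ref{main-lev0}. To repair your proposal you would need to either adopt this two-stage structure or supply a new leading-term and cancellation analysis for each of your four steps; as written, the transfer of the Theorem \ref{simple-theo} bookkeeping does not work.
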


\begin{proof}
(i) The proof is the same as that of (i) in Theorem \ref{simple-theo}.

(ii) It is enough to show that any nonzero proper submodule of ${\rm Ind}_{\frak g^{(0, -q)}}^{\frak g}V$ has a nonzero intersection with $V$ (we point out that the proof for this statement will not use the simplicity on $V$).  To the contrary, suppose that $W$ is a nonzero proper $\mathfrak g$-submodule of ${\rm Ind}_{\frak g^{(0,-q)}}^{\frak g}V$ with $W\cap V=0$. Now we are going to deduce some contradictions. 

Choose  $v\in W$ such that deg$_{\frak s}(v)$ minimal in all nonzero elements of $W$. Then
 \begin{equation}v=\sum_{\stackrel{ (\mi, \mk)\in {\rm supp}_{\frak s}(v)}{\text w(\mi, \mk)=|v|_{\frak s}}} L^\mi G^\mk u_{\mi, \mk}+u,\label{equ-v}\end{equation} where $|u|_{\frak s}<|v|_{\frak s}$ and $u_{\mi, \mk}\in {\rm Ind}_{\frak {hc}^{(0)}}^{\frak {hc}}V$. Note that $\frak {hc}^+u_{\mi, \mk}=0$

Set ${\rm deg}_{\frak s}v=(\bmi, \bmk)$.
If $\bmk\ne\mathbf0$,   acting $F_{\hat{\underline k}-\frac12}$ on \eqref{equ-v},  we get 
\begin{eqnarray}F_{\hat{\underline k}-\frac12}v=v_1+v_2+F_{\hat{\underline k}-\frac12}u,\label{action-F}
\end{eqnarray} 
where 
\begin{eqnarray}
&&v_1=\sum_{\stackrel{(\mi, \mk)\in{\rm supp}_{\frak s}(v)}{ \text w(\mi, \mk)=|v|_{\frak s}}}
L^{\mi} [F_{\hat{\underline k}-\frac12}, G^{\mk}]u_{\mi, \mk},\  {\rm deg}_{\frak s}(v_1)=(\bmi, \mk_1)\\
&&v_2=\sum_{\stackrel{(\mi, \mk)\in{\rm supp}_{\frak s}(v)} { \text w(\mi, \mk)=|v|_{\frak s}} }[F_{\hat{\underline k}-\frac12}, L^{\mi}]G^{\mk}u_{\mi, \mk}, {\rm deg}_{\frak s}(v_2)=(\mi_1, \bmk)\ \text{if}\ v_2\ne0.
\end{eqnarray}

Noticing that $\mi_1\prec\bmi$, $\mk_1\prec \bmk$ and $|F_{\hat{\underline k}-\frac12}u|<|v|_{\frak s}-\hat{\underline k}+\frac12$ or $F_{\hat{\underline k}-\frac12}u=0$ by Lemma \ref{homo}.  
So $F_{\hat{\underline k}-\frac12}v\ne0$ and ${\rm deg}_{\frak s}(F_{\hat{\underline k}-\frac12}v)\prec{\rm deg}_{\frak s}(v)$. It contradicts to the choice of $v$.

%\begin{equation}\sum_{|\mi|}L^{\bmi'}u_{\bmi', \bf0}+\sum_{(\mi, \mk)\prec (\bmi', \bf0)} L^\mi G^\mk u_{\mi, \mk}=0.\label{equ-v1}\end{equation}

So we can suppose that $\bmk=\mathbf0, \bmi\ne0$ and then
\begin{equation}v=\sum_{\stackrel{ (\mi, \mathbf0)\in {\rm supp}_{\frak s}(v)}
{\text w(\mi, \mathbf0)=|v|_{\frak s}}}L^{\mi}u_{\mi, \mathbf 0}+u,\label{equ-u}\end{equation} where $|u|_{\frak s}<|v|_{\frak s}$.

For $\mi=(\cdots, i_2, i_1)\in\mathbb M$, denote by $(L^{\mi})^*:=H_{1}^{i_1}H_{2}^{i_2}\cdots$.  By action of $(L^{\bmi})^*$ on \eqref{equ-u}, we get 
$(L^{\bmi})^*v=au_{\bmi, \mathbf0}$ for some $a\in\mathbb C^*$ since $(L^{\bmi})^*\cdot L^{\mi}u_{\mi, \mathbf0}=0$ if $(\mi, \mathbf0)\prec(\bmi, \mathbf0)$ (see \cite{ZD}), and $(L^{\bmi})^*u=0$ for $u$ in \eqref{equ-u} by Lemma \ref{homo}. This is a contradiction to the choice of $v$. 

So we can suppose that \begin{equation}v=\sum_{(\mj, \ml)\in {\supp}_{\frak {hc}}(v)} H^\mj F^\ml u_{\mj, \ml},\label {equ-v-HF}\end{equation} with a minimal $\text {deg}_{\frak {hc}}(v)$ in all nonzero $v\in W$,  where $u_{\mj, \ml}\in V$.  Note that $\mj=\mathbf{0}$ or $\hat{j}>q$, and 
$\ml=\mathbf{0}$ or $\hat{l}>q$ for any  $(\mi, \mj, \mk, \ml)\in \mathrm{supp}(v)$.

 Set $m_h:={\rm max}\{\hat{\hat{j}}\mid (\mj, \ml)\in {\supp}_{\frak {hc}} (v)\}$ and $m_f:= {\rm max}\{\hat{\hat{l}}\mid (\mj, \ml)\in {\supp}_{\frak {hc}} (v)\}$. They can be $0$ but at least one is positive. Clearly, $m_h, m_f>q$. 
 
If $m_h\ge m_f$,  then by action of $L_{m_h}$ on \eqref{equ-v-HF}  and the condition that $d+(n+1)z\ne0$ for any $n\in\mathbb Z^*$, we get a  contradiction  to the choice of $v$. 

If $m_h<m_f$, then by action of $G_{m_f-\frac12}$ on \eqref{equ-v-HF}  and the condition that $d+(n+1)z\ne0$ for any $n\in\mathbb Z^*$,  we get a similar contradiction.
\end{proof}
%
%\begin{coro} 
%If $d+(n+1)z=0$ for some $n\in\mathbb Z^*$,
%\end{coro}
%%\end{proof}

\begin{rema}{ In Theorem \ref{simple-theo} (resp.  Theorem \ref{simple-theo1}),  the actions of  $L_{q+i}, H_i, G_{q+i-\frac12}, F_{i-\frac12}$ on $\mathrm{Ind}_{\frak g^{(0, -q)}}^{\frak g}V$  for  all $i>t$ (resp. $i>0$)  are locally nilpotent. It follows that $\mathrm{Ind}_{\frak g^{(0, -q)}}^{\frak g}V$ is a simple smooth  ${\frak g}$-module of central charge $(c, z)$ and of level $0$.}

Especially,  if $q=0$ in Theorem \ref{simple-theo1}, then $L_0$ acts on $V$ is not free. So $V$ is one-dimensional, and  ${\rm Ind}_{\frak g^{(0, -q)}}^{\frak g}V$  is a highest weight module. 

\end{rema}

\subsection{Characterizing  simple smooth ${\frak g}$-modules}

In this subsection, we give a characterization of simple smooth ${\frak g}$-modules $S$ of central charge $(c, z)$ at level zero. Moreover, we can suppose that $H_0$ acts on $S$ as a scalar $d$.

%For  $r, t\in \mathbb{N}$, let
%\begin{eqnarray*}
%\frak g^{(r, t)} &=& \bigoplus_{i> r}\mathbb C L_i\oplus\bigoplus_{i> r}\mathbb C G_{i-\frac12} \oplus \bigoplus_{j>t}\mathbb C H_j \oplus \bigoplus_{j>t}\mathbb C F_{j-\frac12},
%\end{eqnarray*}
%Note that $\frak g^{(0, 0)}={\frak g}_+$.

\begin{prop}\label{main-lev0}
Let $S$ be a simple $\frak g$-module with $d+(n+1)z\ne0$ for any $n\in\mathbb Z^*$. Then the following statements are equivalent:

\begin{itemize}
  \item[{\rm (1)}]   There exists $t\in\mathbb Z_+$ such that the actions of $L_i, H_i, G_{i-\frac{1}{2}}, F_{i-\frac{1}{2}}$ for all $i\ge t$ on $S$ are locally finite.
 \item[{\rm (2)}]   There exists $t\in\mathbb Z_+$ such that the actions of $L_i, H_i, G_{i-\frac{1}{2}}, F_{i-\frac{1}{2}}$ for all $i\ge t$ on $S$ are locally nilpotent.
% \item[(3)]  There exist $t\in\mathbb Z_+$ such that  $S$ is a locally finite $\m^{(r, t)}$-module.
 %\item[(4)]  There exist $t\in\mathbb Z_+$ such that $S$ is a locally nilpotent $\m^{(r, t)}$-module.
 \item[{\rm (3)}] There exist $c, z\in \mathbb C$, $q\in\mathbb N$ and  a simple $\mathfrak g^{(0, -q)}$-module $V$ such that both conditions $(a)$ and $(b)$ in Theorem \ref{simple-theo} or Theorem \ref{simple-theo1} are satisfied  and
$S\cong\mathrm{Ind}_{\frak g^{(0, -q)}}^{\frak g}V$.
\end{itemize}

\end{prop}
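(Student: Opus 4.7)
The plan is to verify the cyclic implications $(3) \Rightarrow (2) \Rightarrow (1) \Rightarrow (3)$, with $(3) \Rightarrow (2)$ requiring a short PBW computation, $(2) \Rightarrow (1)$ being immediate, and the converse $(1) \Rightarrow (3)$ as the substantive content.

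For $(3) \Rightarrow (2)$, suppose $S \cong \mathrm{Ind}_{\frak g^{(0,-q)}}^{\frak g} V$ for $V$ satisfying the hypotheses of Theorem \ref{simple-theo} or Theorem \ref{simple-theo1}. Any $w \in S$ is a finite sum $\sum L^{\mi} H^{\mj} G^{\mk} F^{\ml} v_{\mi,\mj,\mk,\ml}$ with $v_{\mi,\mj,\mk,\ml} \in V$. I would push a positive operator $X \in \{L_i, H_i, G_{i-1/2}, F_{i-1/2}\}$ (with $i$ exceeding a uniform threshold determined by $t$ and $q$) through the PBW monomials using Lemma \ref{homo} and the bracket relations of Definition \ref{D1}: each commutator either strictly lowers the combined length or produces a central scalar, and once $X$ reaches $V$ it annihilates it by (b) of the theorems combined with conclusion (i). Iteration bounds the nilpotence exponent in terms of the total length of the PBW expansion of $w$, proving local nilpotence. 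The implication $(2) \Rightarrow (1)$ is immediate since locally nilpotent actions are locally finite.

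For the main direction $(1) \Rightarrow (3)$, the strategy is to produce, from the local finiteness hypothesis, a nonzero $\frak g^{(0,-q)}$-submodule $V \subseteq S$ satisfying (a), (b) of Theorem \ref{simple-theo} or Theorem \ref{simple-theo1}, and then invoke those theorems to conclude $S \cong \mathrm{Ind}_{\frak g^{(0,-q)}}^{\frak g} V$. Fix a large threshold $N \ge t$. For any $v \in S$, the finite-dimensional $\mathbb C[L_N]$-stable subspace it generates admits a Fitting decomposition into $L_N$-nilpotent and $L_N$-invertible parts. I would argue that the nilpotent part must be nonzero: a nonzero eigenvalue $\lambda$ of $L_N$ would, via the brackets $[L_m, L_n] = (m-n)L_{m+n} + (\text{central})$ and $[G_p, G_q] = 2 L_{p+q} + (\text{central})$, force nonzero eigenvalues on an infinite sequence $L_{kN}$ and $G_{(k+1/2)N}$ inconsistent with simultaneous local finiteness. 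Parallel Fitting arguments for $H_N, G_{N-1/2}, F_{N-1/2}$, using the brackets $[G_p, F_q] = H_{p+q} + (2p+1)\delta_{p+q,0}{\bf c}_2$, $[H_m,G_p]=mF_{m+p}$, and $[H_m,F_p]=0$, together with the hypothesis $d+(n+1)z \ne 0$ for $n \in \mathbb Z^*$ (which blocks degenerate eigenvalues arising from the central $H_0$ and ${\bf c}_2$ contributions in the normal-ordered brackets), force all eigenvalues to vanish on the common kernel.

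Setting $V := \bigcap_{i \ge N'} \bigl(\ker L_i \cap \ker H_i \cap \ker G_{i-1/2} \cap \ker F_{i-1/2}\bigr)$ for an appropriate refined threshold $N'$, I would then verify that $V$ is stable under $\frak g^{(0,-q)}$ for some $q \in \mathbb N$, chosen minimally so that the negative Heisenberg-Clifford generators preserve $V$. Next, I would verify either hypothesis (a), (b) of Theorem \ref{simple-theo} (when the minimal positive $t$ with $H_t$ injective on $V$ is $\ge 1$) or of Theorem \ref{simple-theo1} (when $t=0$, which is exactly where the hypothesis $d+(n+1)z\ne 0$ enters). Finally, the natural map $\mathrm{Ind}_{\frak g^{(0,-q)}}^{\frak g} V \to S$ is surjective by the simplicity of $S$ and injective by the simplicity of the induced module established in those theorems, so in particular $V$ itself is a simple $\frak g^{(0,-q)}$-module. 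The main obstacle will be the bookkeeping to coordinate the Fitting-type arguments across the four operator families $L, H, G, F$ of mixed parity and half-integer grading, and extracting the correct minimal $q$ so that both the Virasoro-side and the Heisenberg-Clifford-side vanishing conditions line up with the appropriate choice between Theorem \ref{simple-theo} and Theorem \ref{simple-theo1}.
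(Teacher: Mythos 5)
Your outer structure matches the paper ($(3)\Rightarrow(2)\Rightarrow(1)$ easy, $(1)\Rightarrow(3)$ substantive, ending with the surjection $\mathrm{Ind}_{\frak g^{(0,-q)}}^{\frak g}V\to S$ whose injectivity comes from the submodule-intersection argument of Theorems \ref{simple-theo}/\ref{simple-theo1}), but the core of $(1)\Rightarrow(3)$ has a genuine gap. The hard point is to produce a \emph{single nonzero vector} annihilated simultaneously by $L_i$ and $H_i$ for all sufficiently large $i$ (and then, by a bootstrap, by $G_{i-\frac12}$ and $F_{i-\frac12}$ as well); the paper obtains this by running the nontrivial argument of Chen--Guo \cite[Theorem 6]{CG} (itself in the spirit of Mazorchuk--Zhao \cite{MZ2}), which occupies several pages and does not use simplicity of the module. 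Your replacement — a Fitting decomposition for each operator plus the claim that a nonzero eigenvalue of $L_N$ ($N\ge t$) would contradict local finiteness — does not work: Whittaker modules show that $L_N$ can act with nonzero eigenvalue on some vector while the whole positive part still acts locally finitely (and even locally nilpotently for large indices), so "all eigenvalues vanish" is false at this level of generality. More importantly, even granting that each individual operator has nontrivial (generalized) kernel, you never prove that the infinite intersection $V=\bigcap_{i\ge N'}(\ker L_i\cap\ker H_i\cap\ker G_{i-\frac12}\cap\ker F_{i-\frac12})$ is nonzero; you only assert it "for an appropriate refined threshold $N'$". That nonvanishing is precisely the content of the cited argument and cannot be obtained by the bracket observations you list.

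A secondary issue: in the final step you justify injectivity of $\mathrm{Ind}_{\frak g^{(0,-q)}}^{\frak g}V\to S$ "by the simplicity of the induced module established in those theorems" and then deduce that $V$ is simple. As stated this is circular, since Theorems \ref{simple-theo} and \ref{simple-theo1} take simplicity of $V$ as a hypothesis. The correct route (the one the paper takes) is to observe that the degree-reduction claims in the proofs of those theorems — every nonzero submodule of the induced module meets $V$ nontrivially — do not use simplicity of $V$; this gives $\ker\pi\cap V=0$ hence $\ker\pi=0$, and simplicity of $V$ is then a consequence of simplicity of $S$. This part is fixable, but the missing common-annihilator argument is not, so as it stands the proposal does not establish $(1)\Rightarrow(3)$.
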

\begin{proof}
First we prove $(1)\Rightarrow(3)$. Suppose that $S$ is a simple $\mathfrak g$-module and there exists $t\in\mathbb Z_+$ such that the actions of
$L_i, H_i, G_{i-\frac12}, F_{i-\frac12},  i\ge t$ are locally finite.  Thus we can find nonzero $w\in S$ such that $L_tw = \lambda w$ for some $\lambda\in\mathbb C$.

Similar to the proof of \cite[Theorem 6]{CG} on Pages 81-83, (where the simplicity of the $\frak{hv}$-module $S$ assumed there was not used), we obtain that, 
$$V'=\{v\in S\mid L_iv =H_iv =0, \,\,\mbox{for\ all}\ i>m\}\ne\{0\}$$
for some $m\in \mathbb Z_+$.

Using this just established result we can easily obtain that  \begin{equation}L_{n+i}V^\prime=G_{n+i-\frac12}V^\prime=F_{n+i-\frac12}V^\prime=0\end{equation} for some $n>t$ and any $i>0$.

 For any $r, k\in \mathbb Z$, we consider the following vector space
 $$N_{r, k}=\{v\in S\mid L_iv=G_{i-\frac12}v=H_jv=F_{j-\frac12}v=0, \quad \mbox{for\ all}\ i>r, j>k\}.$$
Clearly  $N_{r, k}\neq0$ for sufficiently large $r, k\in\mathbb Z_+$. 
Moreover $N_{r, k}=0$ if $k<0$ since $H_0v\ne0$ for any $v\in S$.
Thus we can find a smallest nonnegative integer, saying $s$, with $V:=N_{r, s}\neq 0$ for some $r\ge s$. Denote $q=r-s\ge 0$ and $V=N_{s+q, s}$
For any  $i>s+q, j>s$ and $p\geq -q$, it follows from $i+p> s$  that
\begin{eqnarray*}&&L_{i}(H_{p}v)=-pH_{i+p}v=0,\ H_{j}(H_{p}v)=0,\\
&& G_{i-\frac{1}{2}}(H_{p}v)=F_{i+p-\frac12}v=0,\ F_{j-\frac{1}{2}}(H_{p}v)=0
\end{eqnarray*}
for any $v\in V$, respectively.
Clearly, $H_pv\in V$ for
all $p\geq -q$. Similarly, we can also obtain $L_kv, G_{k-\frac12}v, F_{k-q+\frac12}v\in V$
for all $k\in \mathbb N$. Therefore, $V$ is a $\mathfrak g^{(0, -q)}$-module.

\noindent{\bf Case 1. $s\ge 1$.}   By the definition of $V$, we can obtain that the action of $H_{s}$  on $V$ is injective by Theorem \ref{simple-theo}. Since $S$ is simple
and generated by $V$, there exists a canonical surjective map
$$\pi:\mathrm{ Ind}_{c,d, z}V \rightarrow S, \quad \pi(1\otimes v)=v,\quad \forall  v\in V.$$
Next we only need to show that $\pi$ is also injective, that is to say, $\pi$ as the canonical map is bijective.  Let $N=\mathrm{ker}(\pi)$. Obviously, $N\cap V=0$. If $N\neq0$, we can choose a nonzero vector $v\in N\setminus V$ such that $\mathrm{deg}(v)=(\mi,\mj, \mk, \ml)$ is minimal possible.
Note that $N$ is a $\mathfrak g$-submodule of $\mathrm{Ind}_{\frak g^{(0, -q)}}^{\frak g}V$.
By the claim in the proof of Theorem \ref{simple-theo} we can create a new vector $u\in N$  with $\mathrm{deg}(u)\prec(\mi, \mj, \mk,\ml)$, which is a contradiction (where we did not use the assumption on the simplicity of $V$). This forces $K=0$, that is, $S\cong \mathrm{Ind}_{\frak g^{(0, -q)}}^{\frak g}V$ in Theorem \ref{simple-theo}. Then $V$ is a simple $\mathfrak g^{(0, -q)}$-module.

\noindent{\bf Case 2.} $s=0$ and $r\ge0 \,(q=r)$.  Similar to the argument above for $s\ge1$ and using the proof of  Theorem \ref{simple-theo1} and  the assumption that $d+(n+1)z\ne0$ for any $n\in\mathbb Z^*$, we can deduce that $S\cong \mathrm{Ind}_{\frak g^{(0, -q)}}^{\frak g}V$  and $V$ is a simple $\mathfrak g^{(0, -q)}$-module.

Moreover, $(3)\Rightarrow(2)$
 and $(2)\Rightarrow(1)$ are clear. This completes the proof.
\end{proof}

\begin{lemm}\label{RL}
Let $V$ be a simple smooth ${\frak g}$-module. Then there exists $t\in\mathbb Z_+$ such that the actions of $L_i, H_i, G_{i-\frac{1}{2}}, F_{i-\frac{1}{2}}$ for all $i\ge t$ on $V$ are locally nilpotent.
\end{lemm}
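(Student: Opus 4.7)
The plan is to reduce the statement to showing that a natural ``locally killed'' subspace is a nonzero $\frak g$-submodule of $V$, and then invoke simplicity.

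First I fix a nonzero $v_0\in V$; by simplicity $V=U(\frak g)v_0$, and smoothness provides some $N\in\mathbb Z_+$ with $L_n v_0=H_n v_0=0$ for every $n\ge N$ and $G_{n-1/2}v_0=F_{n-1/2}v_0=0$ for every $n\ge N$. I would take $t:=N$ and establish local nilpotence of each of $L_i,H_i,G_{i-1/2},F_{i-1/2}$ for $i\ge t$ in turn.

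The heart of the argument is for $L_i$ (the proof for $H_i$ is identical word-for-word). Let
\[
W:=\{v\in V\mid L_i^k v=0\text{ for some }k\in\mathbb Z_+\};
\]
clearly $W$ is a subspace and $v_0\in W$. To see $W$ is $\frak g$-stable, fix a homogeneous $y\in\frak g_m$ and $v\in W$ with $L_i^{k_v}v=0$. Since $L_i$ is even, the derivation identity
\[
L_i^K(yv)=\sum_{j=0}^{K}\binom{K}{j}\bigl((\mathrm{ad}_{L_i})^j y\bigr) L_i^{K-j}v=\sum_{s=0}^{k_v-1}\binom{K}{K-s}\bigl((\mathrm{ad}_{L_i})^{K-s}y\bigr) L_i^s v
\]
holds because all summands with $j\le K-k_v$ annihilate $v$. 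The finitely many vectors $L_i^s v$ for $0\le s\le k_v-1$ are smooth, so one may choose $N^*\in\mathbb Z_+$ with $\frak g_{\ge N^*}\cdot L_i^s v=0$ for every such $s$. Since $(\mathrm{ad}_{L_i})^{K-s}y\in\frak g_{m+(K-s)i}$, for $K$ sufficiently large every surviving summand's $\frak g$-factor lands in $\frak g_{\ge N^*}$ and thereby annihilates $L_i^s v$. Hence $L_i^K(yv)=0$, i.e.\ $yv\in W$, so $W$ is a nonzero $\frak g$-submodule of the simple module $V$ and must equal $V$.

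For the odd generators I would exploit their squares: since $i\ge t\ge 1$, the bracket $[F_p,F_q]=\delta_{p+q,0}{\bf c}_3$ gives $F_{i-1/2}^2=0$, whence $F_{i-1/2}$ is globally $2$-step nilpotent, while $[G_p,G_q]=2L_{p+q}+(\text{central when }p+q=0)$ gives $G_{i-1/2}^2=L_{2i-1}$, and the already-established local nilpotence of $L_{2i-1}$ (valid since $2i-1\ge t$) yields local nilpotence of $G_{i-1/2}$ by treating even and odd powers separately. The main technical point is that an iterated adjoint $(\mathrm{ad}_{L_i})^j y$ can pick up a central contribution, for example from $[L_i,L_{-i}]=2iL_0+\tfrac{1}{12}(i^3-i){\bf c}_1$; such contributions occur only at the isolated index $j=-m/i$, so taking $K$ large enough that every surviving summand satisfies $K-s>-m/i$ pushes the iteration safely past this anomaly, and beyond it $(\mathrm{ad}_{L_i})^{K-s}y$ is either zero or lies purely in $\frak g_{m+(K-s)i}$, allowing the degree argument to close.
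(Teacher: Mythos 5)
Your proof is correct. The paper itself gives no argument here (it simply declares the lemma clear and points to Lemma 4.2 of \cite{LPX1}), and what you write is the standard argument that reference leaves implicit: fix one smooth generator $v_0$ killed by all $L_n,H_n,G_{n-\frac12},F_{n-\frac12}$ with $n\ge N$, show for each fixed $i\ge N$ that the locally $L_i$-nilpotent (resp.\ $H_i$-nilpotent) vectors form a nonzero submodule via the identity $x^K y=\sum_j\binom{K}{j}\bigl((\mathrm{ad}\,x)^j y\bigr)x^{K-j}$ together with the degree estimate $(\mathrm{ad}\,L_i)^{K-s}y\in\frak g_{m+(K-s)i}$ and smoothness of the finitely many vectors $L_i^s v$, then invoke simplicity; the odd generators follow from $F_{i-\frac12}^2=0$ and $G_{i-\frac12}^2=L_{2i-1}$ with $2i-1\ge t$. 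One small remark: your caveat about central contributions is not actually needed, since the center lies in $\frak g_0$ and $\mathrm{ad}\,L_i$ maps $\frak g_k$ into $\frak g_{k+i}$ for every $k$, so $(\mathrm{ad}\,L_i)^{j}y$ is always homogeneous of degree $m+ji$ (or zero) and the degree argument applies uniformly; also make sure to note $t=N\ge 1$ so that $2i-1\ne 0$ in the Clifford relation.
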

\begin{proof} It is clear (also see Lemma 4.2 in \cite{LPX1}).
\end{proof}

%\begin{proof}Let $0\neq v\in V$, there exists $s\in \mathbb{Z}_+$ such that $L_iv=G_{i-\frac{1}{2}}v=H_iv=G_{i-\frac{1}{2}}v=0$ for all $i\geq s$.
%For $V=U(\mathfrak{g})v$, every element $w$ of $V$ can be uniquely written in the
%following form
%$$
%w=\sum_{\mk, \ml\in\mathbb{M}_1,\mi, \mj\in\mathbb{M}} L^{\mi}H^{\mj}G^{\mk}F^{\ml}v.
%$$
%Then, for $i\geq s$, there exists  $N$ sufficiently large such that
%$
%L_i^Nw=H_i^Nw=G_{i-\frac{1}{2}}^Nw=F_{i-\frac{1}{2}}^Nw=0.
%$
%\end{proof}

From  Theorem \ref{main-lev0} and Lemma \ref{RL}, we are in a position to state one of our main result.

\begin{theo}\label{MT} Let $S$ be a simple smooth $\frak g$-module of central charge $(c, z)$ at level $0$, with $H_0$ acting as a scalar $d$ with $d+(n+1)z\ne0$ for any $n\in\mathbb Z^*$.  Then $S$ is isomorphic to a simple module of the form $\mathrm{Ind}_{\frak g^{(0, -q)}}^{\frak g}V$, where $V$ is a simple $\mathfrak g^{(0, -q)}$-module for some $q\in\mathbb N$ in Theorem \ref{simple-theo} or Theorem \ref{simple-theo1}.\end{theo}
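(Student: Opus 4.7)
The plan is to deduce the theorem as a direct corollary of the already-established Proposition \ref{main-lev0} together with Lemma \ref{RL}, so almost no new work is required beyond correctly verifying the hypotheses. First, since $S$ is a simple smooth $\mathfrak g$-module, I would apply Lemma \ref{RL} to produce some $t \in \mathbb Z_+$ such that the operators $L_i, H_i, G_{i-\frac12}, F_{i-\frac12}$ all act locally nilpotently on $S$ for every $i \ge t$. This is precisely condition (2) of Proposition \ref{main-lev0}.

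Next, I would feed this into Proposition \ref{main-lev0} and use the implication $(2) \Rightarrow (3)$. This gives some $q \in \mathbb N$ and a simple $\mathfrak g^{(0,-q)}$-module $V \subseteq S$ satisfying conditions (a) and (b) of either Theorem \ref{simple-theo} (the case where the minimal $s$ is positive, i.e.\ some $H_s$ still acts injectively on $V$) or Theorem \ref{simple-theo1} (the case $s = 0$), together with an isomorphism $S \cong \mathrm{Ind}_{\mathfrak g^{(0,-q)}}^{\mathfrak g} V$. The numerical hypothesis $d + (n+1)z \ne 0$ for all $n \in \mathbb Z^\ast$ enters exactly in the $s = 0$ branch, where in the proof of Proposition \ref{main-lev0} (borrowing the mechanism of Theorem \ref{simple-theo1}) one acts by $L_{m_h}$ or $G_{m_f - \frac12}$ on a normal-form vector $v = \sum H^{\mathbf j} F^{\mathbf l} u_{\mathbf j,\mathbf l}$ and needs the commutator coefficient, which involves expressions of the form $d + (n+1)z$, to be nonzero in order to produce a vector of strictly smaller $\mathfrak{hc}$-degree and contradict minimality. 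Without this assumption one cannot guarantee that the induced module is simple or that $S$ is isomorphic to it.

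The genuinely nontrivial content, which I would not need to redo, is the extraction of $V$ inside $S$ as carried out in Proposition \ref{main-lev0}: one must locate the smallest $s \in \mathbb N$ for which some $N_{r,s} \ne 0$, verify that $V := N_{s+q, s}$ is stable under all of $\mathfrak g^{(0,-q)}$ (which uses the bracket $[H_m, G_p] = m F_{m+p}$ and the locally finite/nilpotent action to pass from $L_i, H_i$-annihilators to the odd operators), and then show that the canonical surjection $\mathrm{Ind}_{\mathfrak g^{(0,-q)}}^{\mathfrak g} V \twoheadrightarrow S$ has trivial kernel via the degree argument on $\mathbb M^2 \times \mathbb M_1^2$ using the principal order $\prec$ from Section 2. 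Since simplicity of $S$ forces $V$ to be simple, the theorem then follows. Thus the whole proof reduces to the single sentence: \emph{apply Lemma \ref{RL}, then apply Proposition \ref{main-lev0}, noting that the hypothesis on $d, z$ is precisely what is required for the $s = 0$ case of the latter to go through}.
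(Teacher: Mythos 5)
Your proposal is correct and follows essentially the same route as the paper, which deduces Theorem \ref{MT} directly from Lemma \ref{RL} (giving local nilpotence of $L_i, H_i, G_{i-\frac12}, F_{i-\frac12}$ for large $i$) combined with the equivalence in Proposition \ref{main-lev0}, whose implication toward statement (3) produces the simple $\mathfrak g^{(0,-q)}$-module $V$ and the isomorphism $S\cong \mathrm{Ind}_{\mathfrak g^{(0,-q)}}^{\mathfrak g}V$. Your identification of where the hypothesis $d+(n+1)z\ne0$ is used (the $s=0$ branch, via the mechanism of Theorem \ref{simple-theo1}) matches the paper's argument.
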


\section{Applications and examples}

In this section, we utilize  Theorems \ref{mainthm} and  \ref{MT} to provide a characterization of simple highest weight ${\frak g}$-modules and simple Whittaker ${\frak g}$-modules. Additionally, we present several examples of simple smooth ${\frak g}$-modules that are also weak (simple) $\mathcal{V}(c,z,\ell)$-modules.

\subsection{Smooth modules at nonzero level}
We first character simple highest weight modules and Whittaker modules over $\frak g$ at nonzero level. 

\subsubsection{Highest weight modules}

For $h, d, c, z\in\mathbb C$, let $\mathbb C v$  be one-dimensional  ${\frak g}^{(0, 0)}$-module  defined by
$ L_0v=hv, H_0v=d v,  {\bf c}_1v=cv, {\bf c}_2v=zv, {\bf c}_3v=\ell v$ and ${\frak g}^{+}$ acts trivially on $v$.
The {Verma module} for $\frak g$ can be defined by
${\rm Ind}_{\frak g^{(0, 0)}}^{\frak g}\mathbb C v.$

\begin{theo}\label{thmmain}
Let $S$ be a ${\frak g}$-module
(not necessarily weight) on which every
element in the algebra ${\mathfrak{g}}^{+}$ acts
locally nilpotently. Then the following statements hold.
\begin{enumerate}
\item[\rm(i)] The module $S$ contains a nonzero vector $v$ such that
${\mathfrak{g}}^{+}\, v=0$.
\item[\rm(ii)] If $S$ is simple at nonzero level, then $S$ is a highest weight module.
\end{enumerate}
\end{theo}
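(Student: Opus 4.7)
The plan is to prove (i) first, using the local nilpotency hypothesis and the graded bracket structure of $\mathfrak g^+$; then simplicity at nonzero level promotes the resulting vector to a genuine highest weight vector, giving (ii). For (i) the argument has a truncation stage and an Engel stage. In the truncation stage, the commutation relations $[L_m,L_n]=(m-n)L_{m+n}$, $[L_m,H_n]=-nH_{m+n}$, $[L_m,G_p]=(m/2-p)G_{m+p}$, $[L_m,F_p]=-(m/2+p)F_{m+p}$, and $[H_m,G_p]=mF_{m+p}$ show that the finite set
$$\mathcal S=\{L_2,L_3,L_4,H_2,H_3,G_{5/2},G_{7/2},F_{5/2},F_{7/2}\}$$
bracket-generates $\bigoplus_{i\ge 2}\mathfrak g_i$ (all central contributions vanish because the indices are positive), so any vector killed by every $x\in\mathcal S$ is automatically killed by $\bigoplus_{i\ge 2}\mathfrak g_i$. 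Starting from an arbitrary nonzero $w\in S$ and iteratively replacing the current vector by $x^{n(x)-1}(\cdot)$ for $x\in\mathcal S$, a downward induction on a length/degree statistic -- in which the interfering terms $[y,x^k]$ are absorbed because they land in strictly higher graded pieces where local nilpotency applies afresh -- produces a nonzero $v_\ast\in S$ with $\mathcal Sv_\ast=0$, hence $\mathfrak g_iv_\ast=0$ for all $i\ge 2$.

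For the Engel stage, set $V_{\ge 2}=\{v\in S:\mathfrak g_iv=0\ \forall i\ge 2\}\ne 0$. Because $[\mathfrak g_j,\mathfrak g_i]\subseteq\mathfrak g_{i+j}\subseteq\bigoplus_{k\ge 2}\mathfrak g_k$ whenever $i\ge 2$ and $j\ge 1/2$, the space $V_{\ge 2}$ is preserved by $\mathfrak n:=\mathfrak g_{1/2}\oplus\mathfrak g_1\oplus\mathfrak g_{3/2}$, and the induced action of $\mathfrak n$ on $V_{\ge 2}$ factors through a six-dimensional nilpotent Lie superalgebra on which every element remains locally nilpotent. The super analogue of Engel's theorem (applied inside the finite-dimensional $U(\mathfrak n)v$ for $v\in V_{\ge 2}$) then produces a nonzero common annihilator $v\in V_{\ge 2}$, which satisfies $\mathfrak g^+v=0$, proving (i).

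For (ii), suppose $S$ is simple at level $\ell\ne 0$. Since $\dim_{\mathbb C}\mathfrak g$ is countable, Dixmier's lemma makes ${\bf c}_1,{\bf c}_2,{\bf c}_3,H_0$ act on $S$ as scalars $c,z,\ell,d$. Part (i) supplies a nonzero $V_0:=\mathrm{Ann}_S(\mathfrak g^+)$, which is naturally a $\mathbb C[L_0]$-module. It suffices to produce an $L_0$-eigenvector $v\in V_0$: such a $v$ satisfies $U(\mathfrak g^+\oplus\mathfrak g_0)v=\mathbb Cv$, and the simplicity of $S$ forces $S=U(\mathfrak g^-)v$, exhibiting $S$ as a highest weight module. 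If no eigenvector existed in $V_0$, then every finitely generated $\mathbb C[L_0]$-submodule of $V_0$ would be torsion-free and hence free over the PID $\mathbb C[L_0]$. An adaptation of Lemma~\ref{free-L0} to $\mathfrak g$ would then show that every nonzero $\mathfrak g$-submodule of $\mathrm{Ind}_{\mathfrak g^{(0)}}^{\mathfrak g}V_0$ meets $V_0$ nontrivially, so the canonical surjection $\mathrm{Ind}_{\mathfrak g^{(0)}}^{\mathfrak g}V_0\twoheadrightarrow S$ would be an isomorphism; any proper $\mathbb C[L_0]$-submodule $V_0'\subsetneq V_0$ would then induce a proper $\mathfrak g$-submodule $\mathrm{Ind}_{\mathfrak g^{(0)}}^{\mathfrak g}V_0'\subsetneq\mathrm{Ind}_{\mathfrak g^{(0)}}^{\mathfrak g}V_0\cong S$, contradicting simplicity. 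Hence $V_0$ would be simple over $\mathbb C[L_0]$, one-dimensional, and $L_0$ would act on it as a scalar -- the desired contradiction. So an $L_0$-eigenvector exists in $V_0$, and (ii) follows.

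The main obstacle is the truncation stage of (i): each $x\in\mathcal S$ acts locally nilpotently individually on $S$, but the operation $w\mapsto x^{n(x)-1}w$ used to secure $xv_\ast=0$ can destroy annihilations already established for some $y\in\mathcal S$ through the commutators $[y,x^k]$. Controlling this interference by exploiting the grading -- the disturbance terms lie in strictly higher graded pieces where local nilpotency can again be invoked -- and sequencing the killings so that a suitable degree invariant decreases monotonically is the technical heart of the argument.
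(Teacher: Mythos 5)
Your reduction to a finite generating set and the Engel stage at the bottom of the grading are fine as far as they go (the quotient of $\mathfrak g_{1/2}\oplus\mathfrak g_1\oplus\mathfrak g_{3/2}$ modulo $\bigoplus_{i\ge 2}\mathfrak g_i$ is indeed a finite-dimensional nilpotent Lie superalgebra, and super Engel applies once you have a vector killed by all $\mathfrak g_i$, $i\ge 2$). But the truncation stage, which you yourself flag as the ``technical heart,'' is exactly the nontrivial content of part (i), and your sketch does not prove it. Knowing that each element of the finite set $\mathcal S$ acts locally nilpotently does not let you run a ``downward induction on degree'': the grading of $\bigoplus_{i\ge 2}\mathfrak g_i$ is unbounded above, so the induction has no starting point, and each time you replace $v$ by $x^{n(x)-1}v$ to kill a low-degree generator, the interference terms $[y,x^k]$ involve infinitely many higher-degree elements which the current vector has not been arranged to annihilate, so the previously secured annihilations are genuinely lost. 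Local nilpotency of individual elements of an infinite-dimensional positively graded Lie (super)algebra does not formally yield a common annihilated vector; establishing this for Virasoro/Heisenberg--Virasoro type algebras is precisely the theorem of Mazorchuk--Zhao that the paper invokes (\cite[Theorem 1]{MZ1}) to dispose of the even generators $L_i,H_i$, after which it treats the odd generators by explicit manipulations with $L_1$ (e.g.\ $L_1^iF_{\frac12}v=(-1)^iF_{i+\frac12}v$, then replacing $v$ by $F_{n+\frac12}v$, $G_{n+\frac12}v$ or $G_{\frac12}v$). Without either citing such a result or supplying the missing combinatorial argument, part (i) is not proved.

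Part (ii) is also incomplete as written, though more easily repairable. The paper proves (ii) by feeding the vector from (i) into its classification machinery: $n_S=0$, $r_S\le 1$, Theorem \ref{mainthm} gives $S\cong U^{\mathfrak g}\otimes H(z)^{\mathfrak g}$, and then \cite[Theorem 4.3]{LPX1} identifies $U$ as a highest weight $\mathfrak s$-module. You instead try to manufacture an $L_0$-eigenvector in ${\rm Ann}_S(\mathfrak g^+)$ directly, which is a legitimately different (and in spirit more elementary) route, but it hinges on an ``adaptation of Lemma \ref{free-L0} to $\mathfrak g$'' that you neither state nor prove. The proof of Lemma \ref{free-L0} uses the existence, for the relevant central charge, of highest weights whose Verma module is simple; the $\mathfrak g$-analogue requires knowing that for the given $(c,z,d)$ and level $\ell\ne 0$ there is $\lambda_0$ with the corresponding $\mathfrak g$-Verma module simple. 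This is true (via the Kac--Todorov realization \eqref{rep2} one reduces to the super Virasoro Kac determinant at central charge $c-\frac32+\frac{12z^2}{\ell}$), but that reduction is itself a nontrivial step which you would have to carry out; as it stands this link in your chain is asserted, not established.
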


\begin{proof}
(i) It follows from \cite[Theorem 1]{MZ1} that there exists a nonzero vector $v\in S$ such that $L_iv=H_iv=0$ for any $i\in\mathbb Z_+$. 

If  $F_{\frac{1}{2}}v= 0$,  applying $L_i$ we see that $F_{i-\frac{1}{2}}v=0$ $i\in\mathbb Z_+$, i.e., $L_iv=H_iv=F_{i-\frac12}v=0$ for any $i\in\mathbb Z_+$.

If  $F_{\frac{1}{2}}v\ne 0$, then we have $L_1^iF_{\frac12}v=(-1)^iF_{i+\frac12}v$ for any $i\in\mathbb Z_+$.   As $L_1$ acts 
locally nilpotently on $S$, it follows that there exists some $n \in\mathbb Z_+$ such that $F_{j+\frac12}v = 0$ for $j> n$. 
Replace $v$ by $F_{n+\frac12}v$ if $F_{n+\frac12}v\ne 0$, we can get $L_iv=H_iv=0,  F_{n+\frac12}v=0$ for any $i\in\mathbb Z_+$.  Repeating the above steps, we  also find a nonzero vector $v\in S$ such that
\begin{equation}L_iv=H_iv= F_{i-\frac12}v=0,  \ \forall i\in\mathbb Z_+.\label{singular vector}\end{equation}
%If  $G_{\frac{3}{2}}v= 0$,  we can verify that $G_{i+\frac{1}{2}}v=0$ $i\in\mathbb Z_+$. %{\bf How to prove}  $G_{i-\frac{1}{2}}v=0$ for all $i\in\mathbb Z_+$?
%If  $G_{\frac{3}{2}}v\ne 0$, then we have $L_1^iG_{\frac32}v=(-1)^ii!G_{i+\frac32}v$ for any $i\in\mathbb Z_+$.   As $L_1$ acts 
%locally nilpotently on $S$, it follows that there exists some $n \in\mathbb Z_+$ such that $G_{j+\frac12}v = 0$ for $j>n$. 
%Replace $v$ by $G_{n+\frac12}v$ if $G_{n+\frac12}v\ne 0$, we can get $L_iv=H_iv=F_{i-\frac12}v=0,  G_{n+\frac12}v=0$ for any $i\in\mathbb Z_+$.  Repeat the above steps, we can get 
%$G_{i-\frac12}v=L_iv=H_iv=F_{i-\frac12}v=0$ for any $i\in\mathbb Z_+$.
%Similarly we can get $G_{i-\frac{1}{2}}v=0$ for all $i\in\mathbb Z_+$ by \eqref{singular vector}.

If  $G_{\frac{3}{2}}v= 0$,   repeatedly applying $L_1$ we see that $G_{i+\frac{1}{2}}v=0$ $i\in\mathbb Z_+$, i.e., $L_iv=H_iv=F_{i-\frac12}v=G_{i+\frac{1}{2}}v=0$ for any $i\in\mathbb Z_+$.

If  $G_{\frac{3}{2}}v\ne 0$, then we have $L_1^iG_{\frac32}v=(-1)^ii!G_{i+\frac32}v$ for any $i\in\mathbb Z_+$.   As $L_1$ acts locally nilpotently on $S$, it follows that there exists some $n \in\mathbb Z_+$ such that $G_{j+\frac12}v = 0$ for $j>n$. Replace $v$ by $G_{n+\frac12}v$ if $G_{n+\frac12}v\ne 0$, we can get $L_iv=H_iv=F_{i-\frac12}v=0,  G_{n+\frac12}v=0$ for any $i\in\mathbb Z_+$.  Repeating the above steps, we  also  find a nonzero $v\in S$ such that $G_{i+\frac12}v=L_iv=H_iv=F_{i-\frac12}v=0$ for any $i\in\mathbb Z_+$.

If  $G_{\frac{1}{2}}v= 0$,  we can see  that   $L_iv=H_iv=F_{i-\frac12}v=G_{i+\frac{1}{2}}v=0$ for any $i\in\mathbb Z_+$. Statement (i) holds.

If  $G_{\frac{1}{2}}v\ne 0$,  let $u=G_{\frac{1}{2}}v$. we can verify that   that   $L_iu=H_iu=F_{i-\frac12}u=G_{i+\frac{1}{2}}u=0$ for any $i\in\mathbb Z_+$. Statement (i) holds also in this case.  

%Similarly we can get $G_{i-\frac{1}{2}}u=0$ for all $i\in\mathbb Z_+$.
%i.e., ${\mathfrak{g}}^{+}u=0$. 
So there exists a nonzero vector $v\in S$ such that $F_{i-\frac{1}{2}}u=0$, $G_{i-\frac{1}{2}}u=0, L_{i}u=0, H_{i}u=0$ for all $i\in\mathbb Z_+$.

(ii) By (i), we know that $S$ is a simple smooth ${\frak g}$-module with $n_S=0$ and $ r_S\le 1$. From Theorem \ref{main-lev0} and Theorem \ref{mainthm} we know that $S\cong U^{\mathfrak{g}}\otimes H(z)^{\mathfrak{g}}$ as $\mathfrak{g}$-modules for some  simple modules $H\in \mathcal{R}_{{\mathfrak {hc}}}, z\in\mathbb C$ and  $U\in \mathcal{R}_{{\frak s}}$.  Moreover,  $H={\rm Ind}^{{\mathfrak {hc}}}_{{\mathfrak {hc}}^{(0)}}(\mathbb C v)$ is a simple highest weight module over ${\frak g}$. Note that every
element in the algebra ${\frak s}^{(1)}$  acts
locally nilpotently on $\mathbb C v\otimes U$ by the assumption. This implies that the same property also holds on $U$. From \cite[Theorem 4.3]{LPX1} we know that $U$ is a simple highest weight ${\frak s}$-module. This completes the proof.
\end{proof}

As a direct consequence of Theorem \ref{thmmain}, we have
\begin{coro}
Let $S$ be a simple smooth ${\frak g}$-module at nonzero level with $r_S\leq 1$ and $n_S=0$. Then $S$ is a highest weight module.
\end{coro}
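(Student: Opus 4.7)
The plan is to invoke Theorem~\ref{mainthm} to express $S$ as a tensor product and then identify each tensor factor as a highest weight module.

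First, since $n_S = 0$ and $r_S \le 1$, the ``otherwise'' branch of Theorem~\ref{mainthm} (which requires both $r_M \ge 2$ and $n_M \ge 2$) cannot arise. Thus either $S \cong K(z)^{\frak g}$ for some simple $K \in \mathcal R_{\mathfrak{hc}}$ (when $r_S = -\infty$), or $S \cong U^{\frak g} \otimes H(z)^{\frak g}$ for some simple $U \in \mathcal R_{\frak s}$ and $H \in \mathcal R_{\mathfrak{hc}}$ (otherwise). This dichotomy is the starting point.

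Next I would exploit $n_S = 0$ to show the $\mathfrak{hc}$-factor (call it $H$ in the tensor case, or $K$) is a highest weight $\mathfrak{hc}$-module. The defining property of $n_S = 0$ supplies a nonzero vector of $S$ annihilated by $H_i$ and $F_{i+\frac12}$ for every $i \ge 0$; since these operators act only on the $\mathfrak{hc}$-factor, a short tensor-product lemma produces a nonzero vector $h_0$ in that factor killed by $\mathfrak{hc}^+ \cup \{H_0\}$ (in particular, the $H_0$-scalar is $0$). Simplicity of the factor then forces it to be generated by $h_0$, hence it is a highest weight $\mathfrak{hc}$-module.

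For the $\frak s$-factor $U$ in the tensor-product case, I would use $r_S \le 1$. A short normal-ordering case check using the Sugawara-type formulas~\eqref{rep2} and the commutativity $[H_m, F_r] = 0$ shows that $\bar L_p v$ and $\bar G_{p-\frac12} v$ vanish for every $v \in M_0$ and every $p \ge 1$ (each summand has, after moving operators to the right, an innermost factor $H_i$ or $F_{i+\frac12}$ with $i \ge 0$ annihilating $v$). Consequently $L_p = L'_p$ and $G_{p-\frac12} = G'_{p-\frac12}$ on $M_0$ for $p \ge 1$. Identifying $M_0$ with $U \otimes \mathbb C h_0$ in the tensor factorization (so that $L'_p, G'_{p-\frac12}$ act as $L_p, G_{p-\frac12}$ on the $U$-factor), the condition $r_S \le 1$, i.e., $K_0 \neq 0$ with $L'_p K_0 = G'_{p-\frac12} K_0 = 0$ for $p \ge 1$, translates into the existence of a nonzero $u_0 \in U$ with $\frak s^+ u_0 = 0$. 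Invoking \cite[Theorem~4.3]{LPX1} (exactly as in the proof of Theorem~\ref{thmmain}(ii)) then yields that $U$ is a highest weight $\frak s$-module.

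To conclude, the tensor product of a highest weight $\frak s$-module with a highest weight $\mathfrak{hc}$-module is itself a highest weight $\frak g$-module (the tensor of the two top vectors is $L_0, H_0$-eigenvector and annihilated by $\frak g^+$ once one checks, via~\eqref{rep2}, that $\bar L_p, \bar G_{p-\frac12}$ with $p \ge 1$ kill the $\mathfrak{hc}$-highest-weight vector). In the alternative case $S \cong K(z)^{\frak g}$, the $\mathfrak{hc}$-module $K$ being highest weight already makes $S$ highest weight for $\frak g$ via~\eqref{rep2}. The main obstacle is the normal-ordering verification that $\bar L_p$ and $\bar G_{p-\frac12}$ annihilate $M_0$ for $p \ge 1$; once this routine computation is in hand, the conclusion follows by tracking the tensor-factor decomposition and citing the $\frak s$-analog of the corollary.
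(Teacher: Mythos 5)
Your argument is correct, but it is organized differently from the paper's. The paper's own proof is a two-line reduction: from $n_S=0$ and $r_S\le 1$ (together with the same normal-ordering observation you make explicit, namely that $\bar L_p$ and $\bar G_{p-\frac12}$ with $p\ge 1$ annihilate $M_0$) it extracts a nonzero vector $v$ with ${\frak g}^{+}v=0$, deduces from $S=U({\frak g}^{-}+{\frak g}_0)v$ that every element of ${\frak g}^{+}$ acts locally nilpotently, and then simply cites Theorem \ref{thmmain}(ii). You instead bypass Theorem \ref{thmmain} and work directly from the classification Theorem \ref{mainthm}: you rule out the induced branch (legitimately, since Lemma \ref{lemma-r}(i) forces $r_S\in\{-\infty,-1,0,1\}$), show via $n_S=0$ that the ${\mathfrak {hc}}$-factor is a simple highest weight (Fock-type) module, translate $r_S\le 1$ through $L'_p=L_p\otimes 1$, $G'_{p-\frac12}=G_{p-\frac12}\otimes 1$ into an ${\frak s}^{+}$-singular vector of $U$, and assemble the tensor of the two top vectors. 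In effect you have inlined the proof of Theorem \ref{thmmain}(ii) (which itself proceeds by tensor decomposition plus \cite[Theorem 4.3]{LPX1}), skipping the paper's detour through local nilpotency; what the paper's route buys is brevity and reuse of an already-proved statement, while yours makes the key normal-ordering computation explicit and avoids re-entering Theorem \ref{thmmain}. Two small steps you leave implicit are routine but should be said: the identification $M_0=U\otimes\mathbb C h_0$ uses that the vacuum space of the simple ${\mathfrak {hc}}$-module at nonzero level is one-dimensional (or, alternatively, argue with linearly independent ${\mathfrak {hc}}$-components, since $L'_p$ acts as $L_p\otimes 1$ on all of $S$); and before invoking \cite[Theorem 4.3]{LPX1} you need local nilpotency of ${\frak s}^{+}$ on $U$, which follows from the singular vector $u_0$ and simplicity via $U=U({\frak s}^{-}\oplus{\frak s}_0)u_0$ — exactly the standard argument the paper runs at the level of ${\frak g}$.
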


\begin{proof}
The assumption that $r_S\leq 1$ and $n_S=0$ implies that there exists a nonzero vector $v\in M$ such that ${\mathfrak{g}}^{+}v=0$. Then $M=U({\mathfrak{g}}^{-}+\frak g_0)v$. It follows that each element in ${\mathfrak{g}}^{+}$ acts
locally nilpotently on $M$. Consequently, the desired assertion follows directly from Theorem \ref{thmmain}.
\end{proof}

\subsubsection{Whittaker modules}

Now we will focus on the Whittaker modules over $\frak g$ at nonzero level.

For  $m\in \mathbb{Z}_+$,  
from \cite{LPX1}, we know that any simple finite-dimensional $\mathfrak{g}^{(m, 0)}$-module is one dimensional.
Let $\varphi_m$ be a Lie superalgebra homomorphism $\mathfrak{g}^{(m, 0)}\to \mathbb C$. Then
$$ \varphi_k(L_{2m+1+j})=\varphi_m(H_{m+1+j}) =  \varphi_m(G_{m+j+\frac12})=\varphi_m(F_{j+\frac12})=0, \,\,\forall j\in \mathbb N.$$
Let $\mathbb C w$ be one-dimensional $\mathfrak{g}^{(m, 0)}$-module with $xw=\varphi_m(x)w$ for all $x\in\mathfrak{g}^{(m, 0)}$ and $\varphi({\bf c}_1)=c, \varphi({\bf c}_2)=z$, and $\varphi({\bf c}_3)=\ell$  for some $c, z, \ell\in\mathbb C$. 
The induced $\frak g$-module
\begin{equation}\label{whittaker}\widetilde{W}_{\phi_m}={\rm Ind}_{\frak g^{(m, 0)}}^{\frak g} \mathbb C w_{\phi_m}\end{equation} will be called the universal
Whittaker module with respect to $\phi_m$. And any nonzero quotient
of $\widetilde{W}_{\phi_m}$ will be called a Whittaker module with
respect to $\phi_m$.

For the above $\phi_m$ with $\phi_m({\bf c}_3)=\ell\ne 0$, we define a new Lie superalgebra homomorphism
$\phi'_m: {\frak s}^{(m)}\rightarrow \mathbb C$ as follows.
\begin{eqnarray}\phi'_m({\bf c}_1)&=&\phi_m({\bf c}_1)-\frac32+12\frac{\phi({\bf c}_2)^2}{\phi({\bf c}_3)},\\
\phi'_m(L_{k})&=&0,\ \forall k\ge 2m+1,
\nonumber\\
\phi'_m(L_n)&=&\phi_m(L_n)-\frac{1}{2\ell}\sum_{k=0}^m\phi(H_{k})\phi( H_{-k+n})+\frac{(n+1)c_2}{\ell}\phi(H_n)\nonumber\\
&&+\frac{1}{2\ell}\sum_{k=0}^{m}(k+1)\phi(F_{k+\frac12}) \phi(F_{-k-\frac12+n}), \forall n=m,m+1, \ldots,2m,\\
\phi'_m(G_{k+m+\frac12})&=&0, \forall k\in \mathbb N.\end{eqnarray}
 Then we have  the universal Whittaker
${\frak s}$-module $W_{\phi'_m}:={\rm Ind}_{{\frak s}^{(m)}}^{\frak s} \mathbb C w_{\phi'_m}$, where $x\cdot
w_{\phi'_m}=\phi'_m(x)w_{\phi'_m},\forall x\in {\frak s}^{(m)}$.

\begin{theo}\label{thm-whittaker} Suppose that $m\in \mathbb Z_+$, and $\phi_m$ and $\phi'_m$ are given above with $\phi_m({\bf c}_3)=\ell\ne
0$, and $\phi_m({\bf c}_1)=c, \phi_m({\bf c}_2)=z\in\mathbb C$. Let $H=U({\mathfrak {hc}})w_{\phi_m}$ in $W_{\phi_m}$.  Then we have

\begin{enumerate}
\item[{\rm (1)}] $\widetilde{W}_{\phi_m}\cong W_{\phi'_m}^{\frak g}\otimes H(z)^{\frak g}$.
Consequently, each simple whittaker module with respect to $\phi_m$
is isomorphic to   $T^{\frak g}\otimes H(z)^{\frak g}$ for a simple
quotient $T$ of $W_{\phi'_m}$.
\item[{\rm (2)}]  The  $\frak g$-module $\widetilde{W}_{\phi_m}$ is  simple if and only if
$W_{\phi'_m}$ is a simple ${\frak s}$-module. Consequently,  $\widetilde{W}_{\phi_m}$ is simple if and only if
$(\phi'_m(L_{2m-1}),\phi'_m(L_{2m}))\ne(0,0)$, i.e.,
\begin{eqnarray*}&&2\phi_m(L_{2m})\phi_m({\bf c}_3)+\phi_m(H_{m})^2\ne0,\,\,{\text{or}}\\
&&\phi_m(L_{2m-1})\phi_m({\bf c}_3)+\phi_m(H_{m})\phi_m(H_{m-1})\ne0
\end{eqnarray*} when  $m\ge 2$.
$\widetilde{W}_{\phi_1}$ is simple if and only if
$(\phi'_1(L_{1}),\phi'_1(L_{2}))\ne(0,0)$, i.e.,
\begin{eqnarray*} &&2\phi_1(L_{2})\phi_1({\bf c}_3)+\phi_1(H_{1})^2\ne0,\,\,{\text{or}}\\
&&\phi_1(L_{1})\phi_m({\bf c}_3)+\phi(H_0)\phi_1(H_{1})+2\phi_1({\bf c}_2)\phi_1(H_1)\ne0
\end{eqnarray*}  when $m=1$.

\item[{\rm (3)}] Let $T_1, T_2$ be simple quotients of $W_{\phi'_m}$. Then
$T_1^\frak g\otimes H(z)^{\frak g}\cong T_2^\frak g\otimes H(z)^{\frak g}$
if and only if $T_1\cong T_2$.\end{enumerate}
\end{theo}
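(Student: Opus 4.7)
The plan is to prove all three parts by first establishing the tensor-product decomposition in (1) and then applying the tensor-product simplicity principle of Lemma \ref{tensor-simple}.

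For part (1), I would construct a $\mathfrak{g}$-module homomorphism $\Phi\colon\widetilde{W}_{\phi_m}\to W_{\phi'_m}^{\mathfrak{g}}\otimes H(z)^{\mathfrak{g}}$ via Frobenius reciprocity, sending $w_{\phi_m}\mapsto w_{\phi'_m}\otimes w_{\phi_m}$. This requires checking that $w_{\phi'_m}\otimes w_{\phi_m}$ is a $\phi_m$-Whittaker vector for $\mathfrak{g}^{(m,0)}$. The identities for $H_k$ and $F_{k+\frac12}$ with $k\ge 0$ are immediate, since these operators act only on the second tensor factor (the $\mathfrak{hc}$-action on $W_{\phi'_m}^{\mathfrak{g}}$ is trivial). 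The substantive identities are
\[
L_n\cdot(w_{\phi'_m}\otimes w_{\phi_m})=\phi_m(L_n)(w_{\phi'_m}\otimes w_{\phi_m})\quad(n\ge m)
\]
and the analogous statements for $G_{r}$ with $r\ge m+\tfrac12$. Decomposing $L_n=L_n\otimes 1+1\otimes\bar{L}_n$ (and similarly for $G_r$), these reduce to computing $\bar{L}_n\,w_{\phi_m}$ and $\bar{G}_r\,w_{\phi_m}$ from the formulas \eqref{rep2}. Because $H_j\,w_{\phi_m}=0$ for $j>m$ and $F_s\,w_{\phi_m}=0$ for $s\ge\tfrac12$, each normal-ordered sum collapses to finitely many surviving terms, and one checks that the resulting scalars are precisely $\phi_m(L_n)-\phi'_m(L_n)$ and $\phi_m(G_r)-\phi'_m(G_r)$—exactly the defining adjustments of $\phi'_m$. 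That $\Phi$ is then an isomorphism follows from PBW: both sides are free of the same rank over $U(\mathfrak{g}^-\oplus\mathfrak{g}^+_{[0,m)})$, where $\mathfrak{g}^+_{[0,m)}$ denotes the low positive part complementary to $\mathfrak{g}^{(m,0)}$, with surjectivity obtained by induction on PBW degree by first applying $H_n,F_r$ to hit $w_{\phi'_m}\otimes U(\mathfrak{hc}^-)w_{\phi_m}$ and then applying $L_n,G_r$ to lift to $U(\mathfrak{s}^-\oplus\mathfrak{s}^+_{[0,m)})w_{\phi'_m}\otimes U(\mathfrak{hc}^-)w_{\phi_m}$ modulo lower-degree error terms.

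For part (2), Lemma \ref{H-whittaker} (applied to $\phi_m|_{\mathfrak{hc}^{(0)}}$, noting $\ell\ne 0$) gives that $H$ is a simple $\mathfrak{hc}$-module, and by the Heisenberg-Clifford analysis inside the proof of Theorem \ref{smooth-hc} it is strictly simple. Lemma \ref{tensor-simple}(1) then translates the simplicity of $\widetilde{W}_{\phi_m}\cong W_{\phi'_m}^{\mathfrak{g}}\otimes H(z)^{\mathfrak{g}}$ into the simplicity of $W_{\phi'_m}$ as an $\mathfrak{s}$-module. The explicit polynomial criteria stated in the theorem then follow by expanding $\phi'_m(L_{2m-1})$ and $\phi'_m(L_{2m})$ via the defining formulas and invoking the known simplicity criterion for Whittaker modules over the super Virasoro algebra, namely that $W_{\phi'_m}$ is simple precisely when $(\phi'_m(L_{2m-1}),\phi'_m(L_{2m}))\ne(0,0)$ (cf.\ \cite{LPX1}). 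For part (3), Lemma \ref{tensor-simple}(2) identifies $T$ up to isomorphism as $\mathrm{Hom}_{\mathfrak{hc}}(H(z)^{\mathfrak{g}},\,T^{\mathfrak{g}}\otimes H(z)^{\mathfrak{g}})$ with the induced $\mathfrak{s}$-action; hence any $\mathfrak{g}$-isomorphism $T_1^{\mathfrak{g}}\otimes H(z)^{\mathfrak{g}}\cong T_2^{\mathfrak{g}}\otimes H(z)^{\mathfrak{g}}$ forces $T_1\cong T_2$, while the converse is trivial.

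The main obstacle is the Whittaker-vector verification in part (1). Although each identity is a finite computation, carefully tracking the indices in the normal-ordered products of \eqref{rep2} so that only terms with arguments in the ranges where $\phi_m(H_k)$ and $\phi_m(F_{k+\frac12})$ can be nonzero survive, and matching the resulting scalar exactly against $\phi_m(L_n)-\phi'_m(L_n)$ (respectively $\phi_m(G_r)-\phi'_m(G_r)$), is where essentially all of the substantive bookkeeping lies; the remainder of the argument is formal.
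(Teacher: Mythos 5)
Your parts (2) and (3) run along the same lines as the paper (simplicity of $H$ from Lemma \ref{H-whittaker}, strict simplicity of a simple smooth $\mathfrak{hc}$-module, Lemma \ref{tensor-simple}, and the simplicity criterion for super Virasoro Whittaker modules from \cite{LPX1}), so there is nothing to add there. For part (1) you take a genuinely different route. The paper never writes down a Whittaker-vector check on the tensor product: it observes that $H=U(\mathfrak{hc})w_{\phi_m}\cong {\rm Ind}_{\frak g^{(m,0)}}^{\frak g^{(m,-\infty)}}\mathbb C w_{\phi_m}$ and that, as a $\frak g^{(m,-\infty)}$-module, $H\cong \mathbb C w_{\phi'_m}\otimes H(z)^{\frak g^{(m,-\infty)}}$ (this is where the same \eqref{rep2}-bookkeeping you describe is hidden, i.e.\ that $\bar L_n,\bar G_r$ act on $w_{\phi_m}$ by the scalars $\phi_m-\phi'_m$), and then gets the global isomorphism by transitivity of induction together with Lemma \ref{induced}, which pulls $H(z)$ out of the induction functor. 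The benefit of that route is that bijectivity is automatic: every arrow in the chain is an isomorphism of induced modules, so no PBW counting is needed. Your route (Frobenius reciprocity from the universal Whittaker module, sending $w_{\phi_m}\mapsto w_{\phi'_m}\otimes w_{\phi_m}$) is more hands-on and the Whittaker-vector verification you outline is correct, including the collapse of the normal-ordered sums for $n\ge m$, $r\ge m+\frac12$ and the matching with the defining formulas for $\phi'_m$.

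The one step you should repair is the bijectivity of $\Phi$. The phrase ``both sides are free of the same rank over $U(\mathfrak g^-\oplus\mathfrak g^+_{[0,m)})$'' is not meaningful as stated: the span of $\mathfrak g^-$ together with $L_i,G_{i+\frac12}$ for $0\le i\le m-1$ is not a Lie subalgebra (e.g.\ $[L_{m-1},L_{m-2}]=L_{2m-3}\notin$ this span when $m\ge 3$), so there is no enveloping algebra over which freeness could be invoked, and in any case the PBW-filtration pieces are infinite dimensional, so surjectivity plus a rank count cannot substitute for injectivity. What your sketch actually supports is surjectivity (the triangular argument applying $H,F$ first and then $L,G$ modulo lower terms); injectivity needs a separate leading-term argument showing that the images of the PBW monomials $L^{\mathbf i}H^{\mathbf j}G^{\mathbf k}F^{\mathbf l}(w_{\phi'_m}\otimes w_{\phi_m})$ are linearly independent, e.g.\ by comparing highest terms with respect to a filtration in which the leading term of such a vector is $L^{\mathbf i}G^{\mathbf k}w_{\phi'_m}\otimes H^{\mathbf j}F^{\mathbf l}w_{\phi_m}$. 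This can be carried out, but as written it is a gap; alternatively you can simply replace this step by the paper's two-stage induction argument via Lemma \ref{induced}, which avoids the issue entirely.
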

 \begin{proof} From Lemma \ref{H-whittaker}, we know that $H$ is a simple  ${\mathfrak {hc}}$-module.

 (1)   
 From simple computations we see that  
 $$H\cong {\rm Ind}_{\frak g^{(m, 0)}}^{\frak g^{(m, -\infty)}} \mathbb C w_{\phi_m}\cong \mathbb C w_{\phi'_m}\otimes H(z)^{\frak g^{(m, -\infty)}}$$ 
 as $\frak g^{(m, -\infty)}$-modules, where the action of $\frak g^{(m, -\infty)}$ on $\mathbb C w_{\phi'_m}$ is given by
 $({\mathfrak {hc}}+\mathbb C {\bf c}_2) w_{\phi'_m}=0, xw_{\phi'_m}=\phi'_m(x)w_{\phi'_m}$ for all $x\in {\frak s}^{(m)}$. Therefore from Lemma \ref{induced} and \eqref{rep2}, we have 
 \begin{eqnarray*}\widetilde{W}_{\phi_m}&\cong& {\rm Ind}_L^{\frak g}({\rm Ind}_{\frak g^{(m, 0)}}^{\frak g^{(m, -\infty)}} \mathbb C
 w_{\phi_m})\cong {\rm Ind}_{\frak g^{(m, -\infty)}}^{\frak g}(\mathbb C w_{\phi'_m}\otimes H(z)^{\frak g^{(m, -\infty)}})\\
 &\cong& {\rm Ind}_{\frak g^{(m, -\infty)}}^{\frak g} \mathbb C w_{\phi'_m}\otimes H(z)^{\frak g}\cong W_{\phi'_m}^{\frak g}\otimes H(z)^{\frak g}.\end{eqnarray*}

Parts (2) and (3) follow from (5.4), \cite[Proposition 5.5]{LPX1}   and some easy
computations.
\end{proof}

The following result characterizes simple Whittaker $\frak g$-modules.

\begin{theo}\label{thmmain17}
Let $M$ be a ${\frak g}$-module at nonzero level
(not necessarily weight) on which ${\mathfrak{g}}^{+}$ acts
locally finitely. Then the following statements hold.
\begin{enumerate}
\item[\rm(i)] The module $M$ contains a nonzero vector $v$ such that
${\mathfrak{g}}^{+}\, v\subseteq{\rm span}_{\mathbb C}\{v, G_{\frac12}v\}$.
\item[\rm(ii)] If $M$ is simple, then $M$ is a Whittaker module or a highest weight module.
\end{enumerate}
\end{theo}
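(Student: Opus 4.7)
The plan is to mirror the proof of Theorem \ref{thmmain}, substituting locally finite actions for locally nilpotent ones. The weakening yields Whittaker-type vectors in place of highest weight vectors. For (i) I would first extract a vector on which the even subalgebra $\mathfrak{hv}^+ = \mathfrak{g}^+ \cap \mathfrak{hv}$ acts by scalars, then refine it using the odd generators. For (ii) I would combine (i) with the simplicity of $M$ and Theorem \ref{thm-whittaker} to identify the module.

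For the first stage of (i), I would apply results on locally finite actions of twisted-Heisenberg-Virasoro-type algebras (in the spirit of \cite[Theorem 6]{CG} and the arguments used inside the proof of Theorem \ref{thmmain}) to the action of $\mathfrak{hv}^+$ on $M$, obtaining a nonzero $w \in M$ and scalars $\lambda_i, \mu_i \in \mathbb{C}$ (vanishing for $i$ larger than some $m$) such that $L_i w = \lambda_i w$ and $H_i w = \mu_i w$ for every $i \geq 1$. For the odd refinement, I would use $F_r^2 = 0$ for $r > 0$ together with the mutual anticommutativity of the $F_r$'s: an ordered product $F_{r_1} F_{r_2} \cdots F_{r_N} w$ is annihilated by each $F_{r_j}$, and local finiteness of $L_1$ forces the sequence $\{F_{k+1/2} w\}_{k \geq 0}$ to span a finite-dimensional subspace via the recursion $[L_1, F_{k+1/2}] = -(k+1) F_{k+3/2}$, so only finitely many indices need consideration. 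For the $G$-modes, $G_{i+1/2}^2 = L_{2i+1}$ vanishes on $w$ for $i$ large, making the high $G$-modes nilpotent; iterative propagation using $[L_j, G_{i+1/2}] = (j/2 - i - 1/2)\, G_{i+j+1/2}$ together with the anticommutator $\{G_{1/2}, G_{i-1/2}\} = 2 L_i$ then yields a vector $v$ with $G_{i+1/2} v = 0$ for all $i \geq 1$. The mode $G_{1/2}$ is exceptional because $G_{1/2}^2 v = L_1 v = \lambda_1 v$ may be nonzero, so the best conclusion is $\mathfrak{g}^+ v \subseteq \mathbb{C} v + \mathbb{C} G_{1/2} v$.

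For (ii), the scalars $\lambda_i, \mu_i$ (together with the finitely many possibly nonzero values $G_{i+1/2} v$ for $0 \leq i < m$) define a Lie superalgebra homomorphism $\phi_m: \mathfrak{g}^{(m, 0)} \to \mathbb{C}$. Simplicity forces $M = U(\mathfrak{g}) v$. If $\phi_m$ is trivial, i.e. all the scalars vanish and $G_{1/2} v = 0$, then $v$ is a highest weight vector and $M$ is a simple highest weight module. Otherwise, $v$ is a Whittaker vector for $\phi_m$, so $M$ is a simple quotient of $\widetilde{W}_{\phi_m}$, hence a Whittaker module in the sense of Section 6.1.2, and Theorem \ref{thm-whittaker} yields the classification.

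The main obstacle is the $G$-mode refinement in (i): each intermediate modification of $w$ introduces commutator corrections that can disrupt previously established scalar actions of $L_j$ and $H_j$. The fix is to interleave refinements and keep track of how the scalars $\lambda_i, \mu_i$ shift at each step, absorbing the corrections (which live in $F$- and $G$-modes already annihilated or known linear combinations) by controlled reindexing. The asymmetric role of $G_{1/2}$ compared with the other odd generators comes from the identity $G_{1/2}^2 = L_1$: unlike $F_{1/2}^2 = 0$, this operator need not act nilpotently on $v$, which is why the best we can achieve in (i) is the inclusion $\mathfrak{g}^+ v \subseteq \mathrm{span}_{\mathbb{C}}\{v, G_{1/2} v\}$ rather than $\mathfrak{g}^+ v \subseteq \mathbb{C} v$.
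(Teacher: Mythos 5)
Your first stage coincides with the paper's first step (the paper quotes \cite[Theorem 5.9 (i)]{TYZ} to get a vector $v$ with $\mathfrak{g}_{\bar 0}^{+}v\subseteq\mathbb{C}v$), but the two later stages contain genuine gaps. For (i), the target is the containment $\mathfrak{g}^{+}v\subseteq\mathrm{span}_{\mathbb{C}}\{v,G_{\frac12}v\}$, which constrains $L_iv$, $H_iv$, $F_rv$ and $G_rv$ simultaneously for the \emph{final} vector $v$. Your $F$-mode argument only shows that $\mathrm{span}\{F_{k+\frac12}w\}$ is finite dimensional, not that these vectors lie in a two-dimensional space, and your ``iterative propagation'' for the $G$-modes is exactly the step that works in the locally nilpotent proof of Theorem \ref{thmmain} but breaks down here: replacing $v$ by $G_{n+\frac12}v$ (or $F_{n+\frac12}v$) perturbs the eigenvector property through correction terms such as $[L_i,G_{n+\frac12}]v=(\frac i2-n-\frac12)G_{i+n+\frac12}v$ and $[H_i,G_{n+\frac12}]v=iF_{i+n+\frac12}v$, which in the locally finite setting are neither zero nor known to be proportional to the new vector, so the induction does not close. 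You name this problem (``interleave refinements and keep track of the scalars'') but give no actual mechanism, and this is precisely the hard point. The paper sidesteps it: from $\mathfrak{g}_{\bar 0}^{+}v\subseteq\mathbb{C}v$ it deduces that $U(\mathfrak{g}^{+})v$ is finite dimensional and then invokes \cite[Proposition 3.3]{LPX0} together with \cite[Theorem 4.3]{LPX1} to obtain both (i) and (ii), rather than performing the odd refinement by hand.

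For (ii) there is a further gap. A vector satisfying only $\mathfrak{g}^{+}v\subseteq\mathrm{span}_{\mathbb{C}}\{v,G_{\frac12}v\}$ is not yet a Whittaker vector: to realize $M$ as a quotient of $\widetilde{W}_{\phi_m}={\rm Ind}_{\mathfrak{g}^{(m,0)}}^{\mathfrak{g}}\mathbb{C}w_{\phi_m}$ you need a vector on which every element of $\mathfrak{g}^{(m,0)}$ acts through a character $\phi_m$, and since $\mathbb{C}$ is purely even such a character annihilates all odd elements; so you must actually produce $v$ with $F_{j+\frac12}v=0$ and $G_{m+j+\frac12}v=0$ for all $j\in\mathbb{N}$, and with $L_i$, $H_i$ acting by honest scalars (no components along $G_{\frac12}v$). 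Your suggestion that ``the finitely many possibly nonzero values $G_{i+\frac12}v$'' form part of the data of $\phi_m$ is not meaningful -- these are vectors and cannot enter a character -- and the passage from the two-dimensional containment of (i) to such a character (or, in the degenerate case, to a genuine highest weight vector with $F_rv=0$ for all $r>0$) is exactly what the cited results \cite[Proposition 3.3]{LPX0} and \cite[Theorem 4.3]{LPX1} supply in the paper; it does not follow formally from (i). Note also that your argument never engages the nonzero-level hypothesis of the statement, whereas the paper's identification of the simple quotients goes through Theorem \ref{thm-whittaker}, which is formulated at $\phi_m(\mathbf{c}_3)=\ell\neq0$.
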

\begin{proof}
From \cite[Theorem 5.9 (i)]{TYZ}, there exists $v\in M$ such that $\frak g_{\bar0}^+v\subset \mathbb C v$. It is clear that $G_{k+\frac12}G_{l+\frac12} v\in\mathbb C v$ and $F_{k+\frac12}F_{l+\frac12}v=0$ for any $k, l\in\mathbb N$. Moreover $U(\frak g^+)v$ is finite dimensional.
Then the theorem follows from \cite[Proposition 3.3]{LPX0} and \cite[Theorem 4.3]{LPX1}.
\end{proof}

\subsection{Smooth modules of level zero}

In this subsection, we give some examples of simple smooth modules of level zero.
\subsubsection{Highest weight modules}
If $\ell=0$ in 6.1.1, then by Theorem \ref{simple-theo1} (also see \cite{AJR}), the Verma module ${\rm Ind}_{\frak g^{(0, 0)}}^{\frak g}\mathbb C v$
 is irreducible if $d+(n+1)z\ne0$ for any $n\in\mathbb Z^*$.

\subsubsection{Whittaker modules}

If $\phi_m({\bf c}_3)=0$ in 6.1.2, then by Theorem \ref{simple-theo1}, we also have
\begin{theo}\label{whittaker-zero}
For $m\in \mathbb Z_+$ and $\phi_m({\bf c}_3)=0$, the Whittaker module $\widetilde{W}_{\phi_m}$ is simple if and only if $\varphi_m(H_{m})\ne 0$.
\end{theo}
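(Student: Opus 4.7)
The plan is to establish the two implications separately: for necessity (``simple $\Rightarrow \phi_m(H_m)\ne 0$''), the contrapositive will be proved by constructing an explicit square-zero nonzero $\mathfrak{g}$-endomorphism of $\widetilde{W}_{\phi_m}$ when $\phi_m(H_m)=0$; for sufficiency, $\widetilde{W}_{\phi_m}$ will be realized as an induced module to which Theorem \ref{simple-theo} applies. Throughout, the level-zero hypothesis $\phi_m(\mathbf{c}_3)=0$ is essential because it trivializes several (anti)brackets in $U(\mathfrak{g})$.

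For necessity, assume $\phi_m(H_m)=0$ and set $v := F_{-\frac12}w_{\phi_m}$. I would verify that $v$ is again a Whittaker vector for $\phi_m$, i.e. $xv = \phi_m(x)v$ for every $x\in\mathfrak{g}^{(m,0)}$. The checks are: $H_k v = \phi_m(H_k)v$ follows from $[H_k,F_{-\frac12}]=0$; $F_{k+\frac12}v=0$ follows from $\{F_{k+\frac12},F_{-\frac12}\}=\delta_{k,0}\mathbf{c}_3=0$ at level zero; $L_kv=\phi_m(L_k)v$ for $k\ge m$ follows from $[L_k,F_{-\frac12}]=-\tfrac{k-1}{2}F_{k-\frac12}$ together with $\phi_m(F_{k-\frac12})=0$; and the decisive identity $G_{k+\frac12}v = \phi_m(H_k)w_{\phi_m}$ (using $\{G_{k+\frac12},F_{-\frac12}\} = H_k$ for $k\ge m\ge 1$) vanishes precisely because $\phi_m(H_k)=0$ for all $k\ge m$---this is where the assumption $\phi_m(H_m)=0$ is used. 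The universal property of $\widetilde{W}_{\phi_m}$ then furnishes a unique $\mathfrak{g}$-endomorphism $\alpha$ with $\alpha(w_{\phi_m})=v$. Because $F_{-\frac12}^2 = \tfrac12[F_{-\frac12},F_{-\frac12}]=0$ already in $U(\mathfrak{g})$ (since $[F_r,F_r]=\delta_{2r,0}\mathbf{c}_3$ and $2\cdot(-\tfrac12)\ne 0$), one has $\alpha^2=0$; combined with $\alpha(w_{\phi_m})=v\ne 0$ by PBW, this shows that $\mathrm{Ker}(\alpha)$ is a proper nonzero $\mathfrak{g}$-submodule of $\widetilde{W}_{\phi_m}$.

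For sufficiency, assume $\phi_m(H_m)\ne 0$. Since $\mathfrak{g}^{(m,0)}\subset \mathfrak{g}^{(0,-m)}$, induction in stages gives $\widetilde{W}_{\phi_m}\cong\mathrm{Ind}_{\mathfrak{g}^{(0,-m)}}^{\mathfrak{g}}V$ with $V := U(\mathfrak{g}^{(0,-m)})w_{\phi_m}$. I would apply Theorem \ref{simple-theo} with $t=q=m$: conditions (a) and (b) reduce to level-zero commutator calculations. The operator $H_m$ is triangular in any PBW ordering of $V$ with constant diagonal $\phi_m(H_m)$ (using $[H_m,H_k]=[H_m,F_r]=0$ and that commutators of $H_m$ with $L_i,G_{i+\frac12}$ for $0\le i\le m-1$ produce $H,F$ operators with indices where $\phi_m$ vanishes), hence injective; and $H_iV=0$ for $i>m$ and $L_jV=0$ for $j>2m$ because every relevant bracket pushes indices into ranges killed by $\phi_m$. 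The main obstacle is then to verify that $V$ is simple as a $\mathfrak{g}^{(0,-m)}$-module. The plan is a degree induction on a PBW ordering of monomials in the generators $L_i, G_{i+\frac12}$ ($0\le i\le m-1$) and $H_{-j}, F_{-j+\frac12}$ ($1\le j\le m$), using the key ``lowering'' relations $\{G_{m+j-\frac12}, F_{-j+\frac12}\}=H_m$ to strip $F$-factors, $[L_{m+j},H_{-j}]=jH_m$ to strip $H$-factors, $[H_{m-i},L_i]=(m-i)H_m$ to strip $L_i$-factors for $1\le i\le m-1$, and analogous brackets for the remaining $L_0$ and $G_{i+\frac12}$-generators; each such reduction produces the scalar $\phi_m(H_m)\ne 0$, supplying the required invertibility. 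The hard part will be the bookkeeping: organizing these reductions (and the lower-order corrections from commutators with already-scalar factors such as $\phi_m(L_{m+j})$) so that each step strictly decreases the PBW degree in a single fixed ordering, uniformly in the arbitrary values $\phi_m$ takes on generators other than $H_m$. Once the simplicity of $V$ is established, Theorem \ref{simple-theo} yields the simplicity of $\widetilde{W}_{\phi_m}$.
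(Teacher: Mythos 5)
Your ``only if'' half is correct and is exactly the paper's argument written out: when $\phi_m(H_m)=0$ the vector $F_{-\frac12}w_{\phi_m}$ is again a Whittaker vector (the level-zero hypothesis kills $[F_{k+\frac12},F_{-\frac12}]$, and $[G_{k+\frac12},F_{-\frac12}]=H_k$ is where $\phi_m(H_m)=0$ enters), and the resulting square-zero endomorphism exhibits the proper nonzero submodule generated by $F_{-\frac12}w_{\phi_m}$.

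The ``if'' half, however, has a genuine gap at its central step. Your reduction via induction in stages and Theorem \ref{simple-theo} with $t=q=m$ is legitimate (and in fact conditions (a) and (b) do hold for this choice, apart from the small slip that $[H_m,L_0]=mH_m$ does \emph{not} have an index where $\phi_m$ vanishes — triangularity survives, but only after refining the ordering by the $L_0$-degree). The problem is that everything now hinges on the simplicity of $V=U(\mathfrak g^{(0,-m)})w_{\phi_m}$ as a $\mathfrak g^{(0,-m)}$-module, and for this you offer only a plan whose decisive bookkeeping you yourself flag as undone. As stated, the plan does not yet work: acting by $L_{m+j}$ on a monomial containing $H_{-j}$ produces not only $j\phi_m(H_m)$ times the stripped monomial but also $\phi_m(L_{m+j})$ times a monomial of the \emph{same} length with $H_{-j}$ intact, and the positively indexed $F$'s and $H$'s created along the way can hit other factors and reintroduce the scalars $\phi_m(H_k)$, $k\le m$; none of these terms is lower in the naive PBW degree, so the ``single fixed ordering in which each step strictly decreases'' is precisely what remains to be constructed. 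Moreover no single bracket strips an $L_0$-factor at all ($[H_m,L_0]=mH_m$ only shifts the argument), so one needs difference-operator arguments such as $H_m-\phi_m(H_m)$ acting on $f(L_0)w$ as $\phi_m(H_m)\bigl(f(L_0+m)-f(L_0)\bigr)$, which your list of lowering relations does not contain. Since at level zero the natural partners $H_j,F_{j+\frac12}$ of $H_{-j},F_{-j+\frac12}$ act through ${\bf c}_3=0$, the whole reduction must run through these delicate relations, and with $q=m$ the unproved statement is essentially as strong as the theorem itself. The paper's route is more economical here: it inducts in stages through $\mathfrak g^{(0,0)}$, so the degree arguments already proved inside Theorem \ref{simple-theo} dispose of all negatively indexed generators, and the only thing left to check is simplicity over $\mathfrak g^{(0,0)}$ of the finite-variable module ${\rm Ind}^{\mathfrak g^{(0,0)}}_{\mathfrak g^{(m,0)}}\mathbb C w$ in the variables $L_0,\dots,L_{m-1},G_{\frac12},\dots,G_{m-\frac12}$, done as in Theorem \ref{simple-theo1}. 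Until you either carry out the ordering argument for your larger $V$ or switch to this smaller bottom module, the sufficiency direction is not proved.
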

\begin{proof}

Let
$$
V_{\varphi_m}={\rm Ind}^{\mathfrak g^{(0, 0)}}_{\mathfrak g^{(m, 0)}}\mathbb C w.
$$
Similar to the proof of Theorem \ref{simple-theo1}, it is tedious but straightforward to check that $V_{\varphi_m}$ is a simple $\mathfrak g^{(0, 0)}$-module  if  $\varphi_m(H_{m})\ne 0$. From Theorem \ref{simple-theo}, we obtain the corresponding induced  $\frak g$-module ${\rm Ind}_{\frak g^{(0, 0)}}^{\frak g}V_{\varphi_k}$ is simple  if $\varphi_m(H_{m})\ne 0$. Clearly ${\rm Ind}_{\frak g^{(0, 0)}}^{\frak g}V_{\varphi_k}$ is exactly the Whittaker module $W({\varphi_m,{c, z}})$. Moreover if $\varphi(H_m)=0$, then the Whittaker module $\widetilde{W}_{\phi_m}$ has a proper submodule generated by $  F_{-\frac12}w $.
\end{proof}

\subsection{Smooth modules that are not tensor product modules}

For characterizing simple ${\frak g}$-module which are not tensor product modules, we need the following lemma.

\begin{lemm}\label{smooth submodule}
Let $M=U^{{\frak g}}\otimes V^{{\frak g}}$ be a simple smooth ${\frak g}$-module with $n_M>1$ and nonzero level, where $U\in{\mathcal R}_{{\frak s}}$ and $V\in{\mathcal R}_{{\mathfrak {hc}}}$ are simple. Set  $V_0={\rm Ann}_V({\mathfrak {hc}}^{(n_M)})$ and $M_0= {\rm Ann}_M({\mathfrak {hc}}^{(n_M)})$. Then  $V_0$ is a simple ${\frak g}^{(0,-n_M+1)}$-module, and
$M_0=U\otimes V_0$. Hence $M_0$ contains a simple $ {\mathfrak {hc}}^{(-n_M+1)}$-submodule.
\end{lemm}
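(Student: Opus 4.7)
The plan has four natural steps, following the same philosophy as Lemma~\ref{lemma-n}, Corollary~\ref{tensor}, and Theorem~\ref{smooth-hc}. The first step is to verify that $V_0$ is preserved by $\frak g^{(0,-n_M+1)}$: for $v\in V_0$, $n\in\mathbb N$ and $i\ge n_M$, the identity $H_i\bar L_n v=[H_i,\bar L_n]v=iH_{n+i}v=0$ (the $\delta$-term vanishes because $n+i>0$) together with the analogous computations for $\bar G_{n+\frac12}$ and for $F_{i+\frac12}$ close $V_0$ under $\bar L_n,\bar G_{n+\frac12}$; meanwhile ${\mathfrak {hc}}^{(-n_M+1)}$ and $\mathbb C{\bf c}_2$ preserve $V_0$ directly from the ${\mathfrak {hc}}$-relations.

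The second step is to show $M_0=U\otimes V_0$. In $M=U^{\frak g}\otimes V^{\frak g}$ the subalgebra ${\mathfrak {hc}}$ acts trivially on the $U^{\frak g}$-factor (this is how $U^{\frak g}$ was constructed), so $H_i(\sum_j u_j\otimes v_j)=\sum_j u_j\otimes H_iv_j$, and likewise for $F_{i+\frac12}$. Taking $\{u_j\}$ linearly independent, the condition $\sum_j u_j\otimes v_j\in M_0$ forces every $v_j\in V_0$, giving the claimed equality; in particular $V_0\ne 0$ by the minimality of $n_M$.

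The third step, the main technical point, is the simplicity of $V_0$ over $\frak g^{(0,-n_M+1)}$. By Theorem~\ref{smooth-hc}(a), the simple smooth ${\mathfrak {hc}}$-module $V$ is isomorphic to $\mathrm{Ind}_{{\mathfrak {hc}}^{(-m)}}^{{\mathfrak {hc}}}B$ for some $m\in\mathbb Z_+$ and some simple ${\mathfrak {hc}}_{[m]}$-module $B$. A PBW/degree argument entirely parallel to Claim~1 in the proof of Theorem~\ref{simple-theo} (crucially using $[H_{m+1},H_{-m-1}]=(m+1)\ell\ne 0$) identifies the canonical copy $1\otimes B\subset V$ with $V_0$ and forces $m=n_M-1$. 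On $1\otimes B$ the action of ${\mathfrak {hc}}^{(-m)}$ factors through ${\mathfrak {hc}}_{[m]}$, because $H_i,F_{i+\frac12}$ with $i>m$ annihilate $1\otimes B$ by construction. Since ${\mathfrak {hc}}_{[m]}\subset \frak g^{(0,-m)}$, every nonzero $\frak g^{(0,-n_M+1)}$-submodule of $V_0$ is in particular a nonzero ${\mathfrak {hc}}_{[m]}$-submodule of $B$, hence equals $V_0$ by simplicity of $B$.

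Finally, the same factoring observation shows that $V_0$ is already simple as an ${\mathfrak {hc}}^{(-n_M+1)}$-module, so for any $0\ne u\in U$ the subspace $u\otimes V_0\subset M_0$ is a simple ${\mathfrak {hc}}^{(-n_M+1)}$-submodule, yielding the last assertion. The hardest part is the degree computation in the third step that pins down $V_0=1\otimes B$; this is where the hypothesis $n_M>1$ (ensuring $m\ge 1$, so that Theorem~\ref{smooth-hc} applies with a nontrivial finite-dimensional quotient) and the nonzero level are both essential.
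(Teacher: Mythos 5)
Your steps 1, 2 and 4 are fine (and already far more detailed than the paper's own proof, which consists of the single assertion that $u\otimes V_0$ is a simple ${\mathfrak {hc}}^{(-n_M+1)}$-submodule of $M_0$ for $0\ne u\in U$). The genuine gap is in your step 3, at the words ``identifies the canonical copy $1\otimes B\subset V$ with $V_0$ and forces $m=n_M-1$''. Theorem \ref{smooth-hc}(a) produces \emph{some} $m\in\mathbb Z_+$ with no stated relation to $n_M$: the invariant $n_M$ is defined only through the modes $H_i$ together with ${\rm Ann}_M(\frak f^+)$, so it controls the Heisenberg ``width'' of $V$ but says nothing a priori about the Clifford width, and the presentation is not unique in $m$ (one can always re-present at larger width). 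For a presentation with $m>n_M-1$ the space $1\otimes B$ is strictly larger than $V_0$ (it contains vectors not killed by $H_{n_M}$), so the degree argument you invoke cannot identify the two, and nothing is ``forced''. What you actually need is the existence of a presentation at width exactly $n_M-1$, i.e. $V\cong{\rm Ind}_{{\mathfrak {hc}}^{(-(n_M-1))}}^{\mathfrak {hc}}V_0$, and this is precisely the content that a Claim-1-style computation does not supply.

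The statement survives, but the correct route is the finer splitting: by Theorem \ref{smooth-hc}(c) write $V\cong H\otimes K$ with $H$ a simple smooth $\frak h$-module and $K$ a strictly simple smooth $\frak f$-module. Minimality of $n_M$ (together with ${\rm Ann}_K(\frak f^+)\ne0$) shows the Heisenberg factor has width exactly $n_M-1$, and by the structure theory in \cite{LvZ} its vacuum space $B_1={\rm Ann}_H(\{H_i: i\ge n_M\})$ is a simple $\frak h_{[n_M-1]}$-module; the Clifford factor is induced from \emph{any} width by \cite{XZ}, and its vacuum space $B_2={\rm Ann}_K(\{F_{r}: r\ge n_M-\tfrac12\})$ is the strictly simple finite-dimensional Clifford module. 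Then $V_0\cong B_1\otimes B_2$ and Lemma \ref{tensor-simple} gives simplicity over ${\mathfrak {hc}}^{(-n_M+1)}$, hence over $\frak g^{(0,-n_M+1)}$ by your step 1, and the last assertion of the lemma follows at once. One further caveat your step 3 silently glosses over: the identification of $V_0$ with the vacuum space only works if $V_0$, $M_0$ are read with the Section 4 convention, i.e. with $F_{n_M-\frac12}$ included among the annihilating modes; with the literal ${\mathfrak {hc}}^{(n_M)}$ of \eqref{Natations} (fermionic modes starting at $F_{n_M+\frac12}$) the space $V_0$ acquires the extra summand $F_{-n_M+\frac12}(B_1\otimes B_2)$, of which $B_1\otimes B_2$ is a proper ${\mathfrak {hc}}^{(-n_M+1)}$-submodule, so neither your identification nor the simplicity claim holds as literally stated — a defect of the paper's formulation, but one a complete proof must address.
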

\begin{proof} For any nonzero $u\in U$, $u\otimes V_0$ is a simple $ {\mathfrak {hc}}^{(-n_M+1)}$-submodule of $M_0$.
\end{proof}

Lemma \ref{smooth submodule}  means that if $M\in{\mathcal R}_{{\frak g}}$ is not a tensor product module, then $M_0$ contains no simple ${\mathfrak {hc}}^{(-n_M+1)}$-submodule.

Here we will consider the case $n_M=2$.
Let $\mathfrak{b}=\mathbb C h+\mathbb C e$ be the 2-dimensional solvable Lie algebra with basis $h,e$ and subject to Lie bracket $[h,e]=e$.
The following concrete example using \cite[Example 13]{LMZ} and \cite[Example 7.4]{TYZ}  tells us
 how to construct induced smooth ${\frak g}$-modules from a $\mathbb C[e]$-torsion-free simple $\mathfrak{b}$-module.

\begin{exam}\label{ex-7.4}
Let $W=(t-1)^{-1}\mathbb C[t,t^{-1}]\oplus (t-1)^{-1}\mathbb C [t,t^{-1}]\theta$ be the  associative superalgebra with $t, t^{-1}\in W_{\bar 0},$ $ \theta\in W_{\bar 1}$. Note that $\theta^2=0$. From \cite[Example 13]{LMZ} we know that  $W$ is a direct sum of two isomorphic simple $\mathfrak{b}$-module  whose structure  is given by
\begin{equation*}
h\cdot f(t, \theta)=t\frac{d}{dt}(f(t, \theta))+\frac{f(t, \theta)}{t^2(t-1)},\,\,
e\cdot f(t, \theta)=tf(t, \theta),\forall f(t, \theta)\in W.
\end{equation*}

For $c,z,z'\in\mathbb C, \ell\in\mathbb C^*$, we also can make $W$ into a ${\frak g}^{(0,0)}$-module by
$$\begin{aligned}&L_0\cdot f(t, \theta)=h\cdot f(t, \theta), \,H_{1}\cdot f(t, \theta)=e\cdot f(t, \theta), \\
&H_0\cdot f(t, \theta)=z'f(t, \theta), H_{1+i}\cdot f(t, \theta)=L_i\cdot f(t, \theta)=0,\,\,\,i\in\mathbb Z_+,\\
&F_{\frac12}\cdot f(t, \theta)=\theta t f(t, \theta),  G_{\frac12}f(t, \theta)=\frac\partial{\partial \theta}f(t, \theta),\\
&F_{\frac12+i}\cdot f(t, \theta)=G_{i+\frac12}\cdot f(t, \theta)=0,\,\,\,i\in\mathbb Z_+,\\
& {\bf c}_1\cdot f(t, \theta)=cf(t, \theta),\, {\bf c}_2\cdot f(t, \theta)=zf(t, \theta),\, {\bf c}_3\cdot f(t, \theta)=\ell f(t, \theta),\\
\end{aligned}
$$where $f(t, \theta)\in W$.
Then $W$ is a simple ${\frak g}^{(0,0)}$-module. Clearly, the action of $H_{1}$ on $W$ implies that  $W$ contains no simple ${\mathfrak {hc}}^{(0)}$-module. Then $M_0={\rm Ind}_{{\frak g}^{(0,0)}}^{{\frak g}^{(0,-1)}}W$ is a simple ${\frak g}^{(0,-1)}$-module and contains no simple ${\mathfrak {hc}}^{(-1)}$-module.  Let $M={\rm Ind}_{{\frak g}^{(0,-1)}}^{{\frak g}}M_0$. It is easy to see $n_M=2$ and $r_M= 3$. The proof of  Proposition \ref{prop-r2-n2} implies that $M$ is a simple smooth $\frak g$-module. Lemma \ref{smooth submodule}  means that  $M$ is not a tensor product $\frak g$-module.

\end{exam}

 \begin{rema}According to Theorem \ref{mainthm}, if $n_M$ equals 0 or 1, then simple smooth ${\frak g}$-modules must be tensor product modules. However, Example \ref{ex-7.4} demonstrates that for any $n_M>1$, there exist simple smooth ${\frak g}$-modules that are not tensor product modules.
 On the other hand, Theorem \ref{thm-whittaker} states that the smooth module $\widetilde{W}_{\phi_m}$ is a tensor product of smooth modules over ${\mathfrak {hc}}$ and $\mathfrak s$, even when $n_{\widetilde{W}_{\phi_m}}>1$ and $r_{\widetilde{W}_{\phi_m}}>1$.
\end{rema}

\begin{center}{\bf Acknowledgments}
\end{center}

The authors gratefully acknowledge the partial financial support  from  the NNSF (11971315, 12071405) and NSERC (311907-2020). Part of the research in this paper was carried out during K.Z. visited Shanghai University in the period of time Mar.19-May 22, 2023.

\end{document}